\documentclass[11pt]{scrartcl}

\usepackage{mystyle-koma-local}

\newtheorem*{assumptionI}{Assumptions I}
\newtheorem*{assumptionII}{Assumptions II}
\newtheorem*{assumptionH}{Assumption H}
\newtheorem*{assumptionD}{Assumption D}

\newtheorem{question}[theorem]{Question}



\newcommand{\N}{{\mathbb{N}}}
\newcommand{\Z}{{\mathbb{Z}}}
\newcommand{\R}{{\mathbb{R}}}
\newcommand{\E}{{\mathbb{E}}}
\newcommand{\PP}{{\mathbb{P}}}
\newcommand{\1}{{\mathbf 1}}
\newcommand{\depth}{{\mathrm{depth}}}
\newcommand{\Cov}{{\mathrm{Cov}}}

\MSC{60J80, 90B15, 05C80}
\Keywords{preferential attachment; random graphs; branching processes}

\title{Partial orders and monotonicity of logarithmic depth and height in preferential attachment trees}
\author{Christian M\"onch}
\date{August 24, 2026}
\hypersetup{
  pdftitle={Partial orders and monotonicity of logarithmic depth and height in preferential attachment trees},
  pdfauthor={Christian M\"onch},
  pdfsubject={Preferential attachment trees},
  pdfkeywords={preferential attachment, random graphs, branching processes}
}

\begin{document}
\maketitle

\begin{abstract}
We study preferential attachment trees with general attachment rules. A natural intuition is that rewarding high-degree vertices more strongly should make the tree shallower. We test this intuition for the logarithmic growth constants of insertion depth and tree height.
We show by an explicit counterexample that growth-ratio dominance alone does not guarantee either monotonicity. We identify additional tail-order conditions, arising from the branching-process representation of the tree, under which both expected comparisons hold.
\end{abstract}

\medskip

\section{Introduction and main result}

\subsection*{General preferential attachment trees}
Fix an \emph{attachment function} $f:\N_0\to(0,\infty)$. We define a random rooted tree $T_n(f)$ on vertex set $[n]=\{1,2,\dots,n\}$ recursively as follows.

\begin{itemize}[leftmargin=2em]
  \item \textbf{Initialisation:} $T_1(f)$ consists of a single vertex $1$, the \emph{root}.
  \item \textbf{Recursion:} Given $T_{n-1}(f)$, obtain $T_n(f)$ by adding vertex $n$ and connecting it by a single edge to a vertex $i\in\{1,\dots,n-1\}$ chosen conditionally on $T_{n-1}(f)$ with probability
  \begin{equation*}
    \PP(n\to i\mid T_{n-1}(f))
    \;=\;
    \frac{f(\deg_{T_{n-1}(f)}(i))}
         {\sum_{j=1}^{n-1} f(\deg_{T_{n-1}(f)}(j))}.
  \end{equation*}
\end{itemize}

Here $\deg_{T_{n-1}(f)}(i)$ denotes the number of children of $i\in[n-1]$ in the rooted tree $T_{n-1}(f)$ (i.e.\ the out-degree in the orientation away from the root). In particular, every new vertex is born with $0$ children, so the value $f(0)$ governs the \emph{initial attractiveness} of a newborn vertex. (If $\deg_T^{\mathrm{gr}}(i)$ denotes the usual undirected graph degree, then $\deg_T^{\mathrm{gr}}(i)=\deg_T(i)+\1_{\{i\neq 1\}}$.)
This model interpolates between \emph{uniform attachment} ($f\equiv 1$) and \emph{affine preferential attachment} ($f(k)=k+\delta$), and was introduced in 2007 by Rudas, T\'oth, and Valk\'o~\cite{Rudas2007}.
Preferential attachment trees (and, more generally, preferential attachment networks) are fundamental objects in random graph theory and have been studied since early work of Szyma{\'n}ski~\cite{Szyman1985}, and especially following the Barab\'asi--Albert paradigm~\cite{BA1999,Albert2002}.

We study two logarithmic functionals: the \emph{insertion depth} of the newest vertex and the \emph{height} of the tree. For a rooted tree $T$ on $[n]$ let $\depth_T(v)$ denote the graph distance from the root to $v$, and set
\[
D_n(f) := \depth_{T_n(f)}(n),
\qquad
H_n(f) := \max_{v\in[n]} \depth_{T_n(f)}(v).
\]
Both insertion depth and height of random recursive trees are classical objects of study; see e.g.\ the monograph by Drmota~\cite{Drmota2009} for an overview. In particular, it is well known that for uniform attachment ($f\equiv 1$) both $D_n(f)$ and $H_n(f)$ are of order $\log n$ with explicit constants~\cite{Devroye1987,Devroye1988,Pittel1994}. For general PA trees, logarithmic order of $D_n(f)$ under mild regularity conditions follows from the Crump--Mode--Jagers branching process embedding~\cite{Rudas2007}, which can also be used to determine the logarithmic order of $H_n(f)$ in various regimes~\cite{Pittel1994,Drmota2009height,bhamidi2007universal}. For a recent treatment of depths in the broader setting of random recursive metric spaces, see Desmarais~\cite{Desmarais2024}.

In preferential attachment models which allow cycles, related functionals are the hop-count (corresponding to insertion depth) and the diameter (corresponding to height), which have been investigated for a variety of attachment regimes of both affine and general attachment functions~\cite{BolRio2004,Dommers2010,DMM12,DMM17,vanderhofstad2025}.

\subsection*{Order of attachment functions and monotonicity}
Throughout we consider \emph{nondecreasing} attachment functions $f,g:\N_0\to(0,\infty)$.
We say that $f$ \emph{dominates} $g$ in \emph{growth-ratio order}, and write
$f\succeq_{\mathrm{GR}} g$, if
\[
  \frac{f(k+1)}{f(k)} \;\ge\; \frac{g(k+1)}{g(k)}
  \qquad \text{for all } k\in\N_0.
\]
Equivalently, the ratio $h(k):=f(k)/g(k)$ is nondecreasing in $k$.

Intuitively, stronger reinforcement towards high-degree vertices should produce a shallower tree. Growth-ratio dominance (GRD) is a natural partial order for this intuition.
Set
\[
\E_f[D_n] := \E\big[D_n(f)\big],
\qquad
\E_f[H_n] := \E\big[H_n(f)\big].
\]

\begin{question}\label{ques:GRD-monotone}
Let $f,g:\N_0\to(0,\infty)$ be nondecreasing and assume $f\succeq_{\mathrm{GR}} g$.
Under which additional structural conditions, if any, does one have
\[
  \E_f[D_n] \;\le\; \E_g[D_n], \text{ and }  \E_f[H_n] \;\le\; \E_g[H_n].
\]
\end{question}

We prove conditional asymptotic forms of Question~\ref{ques:GRD-monotone} and show that the unconditional statement is false for both depth and height at the level of logarithmic constants.
Under the analytic and probabilistic conditions collected below the limits
\[
  \mathsf{c}_f := \lim_{n\to\infty}\frac{\E_f[D_n]}{\log n}, \quad \mathsf{c}^{\ast}_f := \lim_{n\to\infty}\frac{\E_f[H_n]}{\log n}
\]
exist and admit representations as parameters of the branching process embedding of $T_n(f)$; see Section~\ref{sec:cmj-parameters} for a brief exposition. For depth we state the required mean identification explicitly; for height we assume an $L^1$ law so that its expectation constant agrees with the probabilistic height constant.

We collect the additional hypotheses used in our main results in three groups:
\begin{itemize}[leftmargin=2em]
  \item Assumptions I, items (I.1)--(I.6), are generic CMJ/BRW regularity conditions on a single attachment function $f$.
  \item Assumptions II, items (II.1)--(II.3), are analytic conditions on the multiplicative interpolation $f_\theta$ between $g$ and $f$.
  \item Assumptions H and D are the profile-order conditions used in the height and depth comparison arguments, respectively.
\end{itemize}

\begin{assumptionI}
\label{ass:reg-attach}
For an attachment function $f:\N_0\to(0,\infty)$ let $m_f$ be the Laplace transform of the associated pure-birth process, cf.\ \eqref{eq:mf-def}--\eqref{eq:mf-series}. Whenever the Euler--Lotka equation $m_f(\lambda)=1$ has a unique solution we denote it by $\lambda_f$, and we set $Q_f:=-\lambda_f m_f'(\lambda_f)$ when finite.
\begin{enumerate}[label=(I.\arabic*)]
  \item (Monotonicity) $f$ is nondecreasing and strictly positive on $\N_0$.
  \item (Non-explosion) $\sum_{k\ge 1} 1/f(k)=\infty$. Equivalently, the pure-birth process with jump rates $f(k)$ does not explode.
  \item (Malthusian regime) There exists $\lambda>0$ with $m_f(\lambda)<\infty$, and then there is a unique $\lambda_f>0$ such that $m_f(\lambda_f)=1$.
  \item (Finite derivative) $0<-m_f'(\lambda_f)<\infty$.
  \item (Mean-depth identification) The expected insertion depth satisfies $\E_f[D_n(f)]/\log n\to Q_f^{-1}$. Thus the limit $\mathsf{c}_f$ exists and equals $1/Q_f$.
  \item (Height LLN/BRW speed) The CMJ/BRW embedding associated with $f$ satisfies the Kingman~\cite{Kingman1975} hypotheses (in particular finite exponential moments) ensuring a deterministic extremal speed $\kappa_f$, the variational representation \eqref{eq:kappa-var} with an interior maximiser $\lambda^{\ast}_f$, and $L^1$ convergence of the scaled height. Under these hypotheses
  \begin{equation*}
	\frac{H_n(f)}{\log n}\to \mathsf{c}^{\ast}_f=\frac{1}{\lambda_f\kappa_f}\text{ in }L^1\text{ and hence in probability.}
\end{equation*}
\end{enumerate}
\end{assumptionI}

We also need the following analytic regularity conditions along interpolations between attachment functions $f$ and $g$.
\begin{assumptionII}
\label{ass:reg-interp}
Let $f,g:\N_0\to(0,\infty)$ be nondecreasing with $f\succeq_{\mathrm{GR}}g$, set $h:=f/g$, and define the interpolation $f_\theta:=g h^\theta$ for $\theta\in[0,1]$.
\begin{enumerate}[label=(II.\arabic*)]
  \item (Uniform Malthusian gap) Along the interpolation the Euler--Lotka equation $m_{f_\theta}(\lambda_\theta)=1$ has a unique solution for each $\theta\in[0,1]$, and $\inf_{\theta\in[0,1]}\lambda_\theta\ge \lambda_->0$.
  \item (Uniform dominated envelope) For each $\theta$ and $\lambda>0$, set
\[
A_n^{(\theta)}(\lambda):=\prod_{i=0}^{n-1}\frac{f_\theta(i)}{f_\theta(i)+\lambda},
\qquad
r_i^{(\theta)}(\lambda):=\sum_{n\ge i+1}A_n^{(\theta)}(\lambda).
\]
There exists $\lambda_0\in(0,\lambda_-)$ such that
\begin{equation}\label{eq:uniform-dominating}
\sup_{\theta\in[0,1]}\sum_{i\ge 0}\frac{(1+|\log h(i)|)\,r_i^{(\theta)}(\lambda_0)}{f_\theta(i)+\lambda_0}<\infty.
\end{equation}
  \item (Height envelope regularity) Writing the same interpolation as $f_s:=g h^s$, define
  \[
  J_s(\lambda):=\frac{-\log m_{f_s}(\lambda)}{\lambda},
  \qquad \lambda>\lambda_s.
  \]
  Along the interpolation the variational problem
  \[
  \kappa_s:=\sup_{\lambda>\lambda_s}J_s(\lambda)
  \]
  has an interior maximiser $\lambda_s^\ast\in(\lambda_s,\infty)$, which may be chosen measurably in $s$.
  The map $s\mapsto\kappa_s$ is absolutely continuous and, for almost every $s\in[0,1]$, satisfies
  \[
  \kappa_s'
  =
  -\frac{1}{\lambda_s^\ast}\,\partial_s\log m_{f_s}(\lambda_s^\ast).
  \]
  The differentiations and sum interchanges at $\lambda_s^\ast$ are justified by the domination in (II.2).
\end{enumerate}
\end{assumptionII}

The assumptions above are regularity and analytic hypotheses: they ensure that the CMJ constants are well defined and that the interpolation can be differentiated. The next two assumptions are the profile-order inputs that drive the monotonicity comparisons themselves.

\begin{assumptionH}
\label{ass:height-endpoint-order}
Let $f,g:\N_0\to(0,\infty)$ be nondecreasing with $f\succeq_{\mathrm{GR}}g$, set $h:=f/g$, and write
$f_s:=g h^s$ for $s\in[0,1]$.
Let $\lambda_s$ be the Malthusian parameter of $f_s$, and let $\lambda_s^\ast$ be the height optimiser from (II.3).
For $s\in[0,1]$ and $\lambda\ge\lambda_s$, define
\[
A_n^{(s)}(\lambda):=\prod_{i=0}^{n-1}\frac{f_s(i)}{f_s(i)+\lambda},
\qquad
m_s(\lambda):=\sum_{n\ge1}A_n^{(s)}(\lambda),
\]
and let $N$ have distribution
\[
  \PP_{s,\lambda}(N=n):=\frac{A_n^{(s)}(\lambda)}{m_s(\lambda)},\qquad n\ge1.
\]
Set
\[
T_k^{(s)}(\lambda):=\PP_{s,\lambda}(N>k),\qquad
c_k^{(s)}(\lambda):=\frac{1}{f_s(k)+\lambda},\qquad
w_k^{(s)}(\lambda):=\lambda c_k^{(s)}(\lambda)T_k^{(s)}(\lambda),
\]
Finally, set
\[
W_s(\lambda):=\sum_{j\ge0}w_j^{(s)}(\lambda),
\qquad
\bar L_s(\lambda):=
\frac{1}{W_s(\lambda)}
\sum_{j\ge0}w_j^{(s)}(\lambda)\log h(j).
\]
The \emph{height-score endpoint order} is that, for every $s\in[0,1]$,
\[
  \bar L_s(\lambda_s)\ge \bar L_s(\lambda_s^\ast).
\]
Equivalently, the $\log h$-mean under the probability measure proportional to
$w_k^{(s)}(\lambda_s)$ is at least its $\log h$-mean under the probability measure proportional to
$w_k^{(s)}(\lambda_s^\ast)$.
\end{assumptionH}

\begin{assumptionD}
\label{ass:depth-profile-order}
Let $f,g:\N_0\to(0,\infty)$ be nondecreasing with $f\succeq_{\mathrm{GR}}g$, set $h:=f/g$, and write
$f_\theta:=g h^\theta$, $\theta\in[0,1]$. Let $\lambda_\theta$ be the Malthusian parameter from (II.1) and define the gauged interpolation
\[
  f_\theta^\ast(k):=\frac{f_\theta(k)}{\lambda_\theta}.
\]
For $\lambda=1$, set
\[
A_n^{(\theta,\ast)}:=\prod_{i=0}^{n-1}\frac{f_\theta^\ast(i)}{f_\theta^\ast(i)+1},
\qquad
r_k^{(\theta,\ast)}:=\sum_{n\ge k+1}A_n^{(\theta,\ast)},
\]
and
\[
\alpha_k^{(\theta)}:=\frac{1}{f_\theta^\ast(k)+1},
\qquad
w_k^{(\theta)}:=\alpha_k^{(\theta)}r_k^{(\theta,\ast)}.
\]
Define the corrected depth profile
\[
\Gamma_k^{(\theta)}
:=
\alpha_k^{(\theta)}
+
\frac{1}{r_k^{(\theta,\ast)}}
\sum_{j\ge0}\alpha_j^{(\theta)}r_{\max\{k,j\}}^{(\theta,\ast)}
\]
and its $w$-mean
\[
\overline\Gamma_\theta
:=
\frac{\sum_{j\ge0}w_j^{(\theta)}\Gamma_j^{(\theta)}}{\sum_{j\ge0}w_j^{(\theta)}}.
\]
The sums defining $\overline\Gamma_\theta$ are required to be finite.
The \emph{depth-profile order} is that, for every $\theta\in[0,1]$ and every $\ell\ge1$,
\begin{equation}\label{eq:depth-profile-order}
\sum_{k\ge\ell}
w_k^{(\theta)}
\bigl(\Gamma_k^{(\theta)}-\overline\Gamma_\theta\bigr)
\ge0.
\end{equation}
Equivalently, the probability measure proportional to $w_k^{(\theta)}\Gamma_k^{(\theta)}$ stochastically dominates the probability measure proportional to $w_k^{(\theta)}$.
\end{assumptionD}

Informally, GRD supplies an increasing direction: along the interpolation $f_s=g(f/g)^s$, the logarithmic derivative is $\partial_s\log f_s(k)=\log(f(k)/g(k))$, and this sequence is nondecreasing exactly when $f\succeq_{\mathrm{GR}}g$.  Assumptions~D and H are the extra statements that the depth and height sensitivities have the right sign when tested against this increasing direction.  Section~\ref{sec:order-geometry} develops this geometric viewpoint after the CMJ notation has been introduced.

Our first theorem shows that $\mathsf{c}_f$ is monotone under GRD once the corrected depth profile has the required stochastic order.

\begin{theorem}\label{thm:monotonicity}
Let $f,g:\N_0\to(0,\infty)$ be nondecreasing with $f \succeq_{\mathrm{GR}} g$.
Assume that $f$ and $g$ satisfy Assumptions~I (I.1)--(I.5), that the multiplicative interpolation between $g$ and $f$ satisfies Assumptions~II (II.1)--(II.2), and that Assumption~D holds.
Then the limits $\mathsf{c}_f,\mathsf{c}_g$ exist and
\[
  \mathsf{c}_f \le \mathsf{c}_g.
\]
\end{theorem}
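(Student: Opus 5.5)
The plan is to reduce the comparison of $\mathsf{c}_f$ and $\mathsf{c}_g$ to a comparison of the CMJ parameters $Q$, and then to move from $g$ to $f$ along the interpolation $f_\theta=gh^\theta$, showing that $Q$ can only increase along the way. By (I.5), $\mathsf{c}_f=1/Q_f$ and $\mathsf{c}_g=1/Q_g$, so it suffices to show $Q_f\ge Q_g$. I set $Q_\theta:=-\lambda_\theta m_{f_\theta}'(\lambda_\theta)$ and aim to prove that $\theta\mapsto Q_\theta$ is absolutely continuous on $[0,1]$ with $\partial_\theta Q_\theta\ge 0$ for almost every $\theta$.

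\textbf{Step 1: gauge and write $Q_\theta$ as a sum.} The quantity $Q_\theta$ does not change if $f_\theta$ is replaced by the gauged function $f^\ast_\theta=f_\theta/\lambda_\theta$ and $\lambda_\theta$ by $1$. So at every $\theta$ I will work at Malthusian parameter $1$, with the constraint $m^\ast_\theta(1)=\sum_n A_n^{(\theta,\ast)}=1$. Differentiating $\log A_n$ in $\lambda$ gives
\[
  Q_\theta=\sum_{n\ge1}A_n^{(\theta,\ast)}\sum_{i<n}\alpha_i^{(\theta)}=\sum_{k\ge0}\alpha_k^{(\theta)}r_k^{(\theta,\ast)}=\sum_k w_k^{(\theta)}.
\]

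\textbf{Step 2: differentiate in $\theta$.} The logarithmic derivative of the gauged function is $\varphi_\theta(k):=\partial_\theta\log f^\ast_\theta(k)=\log h(k)-\lambda_\theta'/\lambda_\theta$. Since $\partial\log\bigl(f^\ast/(f^\ast+1)\bigr)=\varphi\,\alpha$, I get
\[
  \partial_\theta A_n=A_n\sum_{i<n}\varphi(i)\alpha_i .
\]
Differentiating the constraint $m^\ast_\theta(1)=1$ then yields $\sum_k\varphi(k)w_k=0$. This identifies $\lambda'_\theta/\lambda_\theta$ as the $w$-mean of $\log h$, so $\varphi$ is $\log h$ centred under the measure proportional to $w$.

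Next I differentiate $\sum_k\alpha_k r_k$ term by term. One contribution is $\partial\alpha_k=-\varphi(k)\alpha_k(1-\alpha_k)$. The other comes from
\[
  \partial r_k=\sum_{n>k}A_n\sum_{i<n}\varphi(i)\alpha_i ,
\]
and after exchanging sums it becomes $\sum_i\varphi(i)\alpha_i\sum_k\alpha_k r_{\max\{k,i+1\}}$. Collecting coefficients of $\varphi(k)$, the target is an identity of the form
\[
  \partial_\theta Q_\theta=\sum_k\varphi(k)\,w_k\,\Gamma_k ,
\]
with $\Gamma_k$ the corrected depth profile of Assumption~D, up to index shifts of the type $r_{\max\{k,j\}}$ versus $r_{\max\{k,j+1\}}$, which are absorbed using $r_k-r_{k+1}=A_{k+1}$. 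All sum interchanges and differentiation under the sum are justified by the uniform envelope (II.2): the factor $(1+|\log h(i)|)$ there dominates $|\varphi|$, uniformly in $\theta$, since $\lambda_\theta'/\lambda_\theta$ is a $w$-average of $\log h$ and $\lambda_\theta\ge\lambda_->\lambda_0$ by (II.1). This also gives absolute continuity of $\theta\mapsto\lambda_\theta$ and of $\theta\mapsto Q_\theta$, via the implicit function theorem applied to $m_{f_\theta}(\lambda)=1$, using (I.4).

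\textbf{Step 3: sign via Abel summation.} Using the centring $\sum_k w_k\varphi(k)=0$, I can subtract the constant $\overline\Gamma_\theta$:
\[
  \partial_\theta Q_\theta=\sum_k w_k(\Gamma_k-\overline\Gamma_\theta)\log h(k).
\]
Summation by parts, with $\Delta_\ell:=\log h(\ell)-\log h(\ell-1)\ge0$ by GRD, gives
\[
  \partial_\theta Q_\theta=\sum_{\ell\ge1}\Delta_\ell\sum_{k\ge\ell}w_k(\Gamma_k-\overline\Gamma_\theta).
\]
The boundary term vanishes because the full centred sum is zero, and the tail terms vanish by the finiteness requirement in Assumption~D together with (II.2). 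Each inner sum is $\ge0$ by \eqref{eq:depth-profile-order}, so $\partial_\theta Q_\theta\ge0$. Integrating over $[0,1]$ gives $Q_f=Q_1\ge Q_0=Q_g$, and hence $\mathsf{c}_f\le\mathsf{c}_g$.

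\textbf{Main obstacle.} The hard step is the bookkeeping in Step 2: checking that the coefficient of $\varphi(k)$ in $\partial_\theta Q$ is exactly $w_k\Gamma_k$, with the correct sign and indexing, so that the derivative pairs against $\Gamma$ rather than some other profile. I would first verify the identity on a finitely supported perturbation $\varphi=\1_{\{k\}}$, where both sides can be computed directly, and then extend by linearity and the domination from (II.2).
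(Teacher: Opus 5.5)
Your proposal is correct and follows the paper's proof essentially step for step. The steps match: gauge to Malthusian parameter $1$, the centring $\sum_k\varphi(k)w_k=0$, the identity $\partial_\theta Q_\theta=\sum_k\varphi(k)w_k\Gamma_k$, subtraction of $\overline\Gamma_\theta$, and Abel summation against Assumption~D. The only difference is cosmetic: you differentiate $Q_\theta=\sum_k\alpha_kr_k$ directly in $\theta$, while the paper computes the mixed derivative $-\partial_{\theta\lambda}m^\ast_\theta(1)$, and both produce the same terms.

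One correction to your bookkeeping. The interchange gives $\partial_\theta r_k=\sum_i\varphi(i)\alpha_i\,r_{\max\{k,i\}}$, because $n>k$ and $n>i$ means $n\ge\max\{k,i\}+1$; it does not give $r_{\max\{k,i+1\}}$. With this index, once the centring removes $-\sum_k\varphi(k)w_k$, the coefficient of $\varphi(k)$ is exactly $w_k\bigl(\alpha_k+\frac{1}{r_k}\sum_j\alpha_jr_{\max\{k,j\}}\bigr)=w_k\Gamma_k$. So no index-shift absorption via $r_k-r_{k+1}=A_{k+1}$ is needed. Such an absorption would not be legitimate anyway, since Assumption~D concerns $\Gamma_k$ exactly.
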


The height result is analogous, with the height-score endpoint order replacing Assumption~D.
\begin{theorem}\label{thm:height-monotonicity}
Let $f,g:\N_0\to(0,\infty)$ be nondecreasing with $f\succeq_{\mathrm{GR}} g$, and write $f_s:=g\,(f/g)^s$ for the GRD interpolation, $s\in[0,1]$.
Assume that along $(f_s)_{s\in[0,1]}$ the functions satisfy Assumptions~I (I.1)--(I.4) and (I.6).
Assume moreover that the interpolation satisfies Assumptions~II (II.1)--(II.3), and that the pair $(g,f)$ satisfies
Assumption~H.
Then the limits $\mathsf{c}^{\ast}_f,\mathsf{c}^{\ast}_g$ exist and
\[
  \mathsf{c}^{\ast}_f\le \mathsf{c}^{\ast}_g.
\]
\end{theorem}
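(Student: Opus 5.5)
Since $\mathsf{c}^\ast_{f_s}=1/(\lambda_s\kappa_s)$ by (I.6), it suffices to show that $s\mapsto \lambda_s\kappa_s$ is nondecreasing on $[0,1]$. Writing $\Phi(s):=\log\lambda_s+\log\kappa_s$, the plan is to prove $\Phi'(s)\ge0$ for almost every $s$ and integrate; this is legitimate if $\Phi$ is absolutely continuous. For $\kappa_s$ this is given by (II.3). For $\lambda_s$ I would use the implicit function theorem on $m_{f_s}(\lambda)=1$. Here $\partial_\lambda m_{f_s}(\lambda_s)\ne0$ by (I.4), and $\partial_s m_{f_s}$ exists with interchange of sum and derivative by the domination (II.2). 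Together with $\lambda_s\ge\lambda_->0$ from (II.1) and $\kappa_s>0$, this gives $\Phi(1)\ge\Phi(0)$, hence $\mathsf c^\ast_f\le\mathsf c^\ast_g$. Existence of the limits is part of (I.6) at $s=0,1$.

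\textbf{Derivative of $\log m_s$.} Since $\partial_s\log f_s(i)=\log h(i)$, we have
\[
\partial_s\log\frac{f_s(i)}{f_s(i)+\lambda}=\frac{\lambda}{f_s(i)+\lambda}\log h(i)=\lambda c_i^{(s)}(\lambda)\log h(i).
\]
It follows that $\partial_s A_n^{(s)}=A_n^{(s)}\sum_{i<n}\lambda c_i\log h(i)$. Summing over $n$ and swapping the sums, justified by (II.2), gives
\[
\partial_s\log m_s(\lambda)=\sum_{i\ge0}\lambda c_i^{(s)}(\lambda)\,T_i^{(s)}(\lambda)\log h(i)=W_s(\lambda)\bar L_s(\lambda).
\]
I would verify this identity for $\lambda\in\{\lambda_s,\lambda_s^\ast\}$. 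For $\lambda_s^\ast>\lambda_s$ the sums converge even more easily, since $A_n$ is decreasing in $\lambda$. A similar computation gives
\[
\partial_\lambda \log m_s(\lambda)=-\sum_i c_i T_i=-W_s(\lambda)/\lambda .
\]

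\textbf{Assembling $\Phi'$.} Differentiating $\log m_s(\lambda_s)=0$ gives
\[
\lambda_s'=-\frac{\partial_s\log m_s(\lambda_s)}{\partial_\lambda\log m_s(\lambda_s)}=\frac{\lambda_s W_s(\lambda_s)\bar L_s(\lambda_s)}{W_s(\lambda_s)},
\]
so $(\log\lambda_s)'=\bar L_s(\lambda_s)$. For the height part, (II.3) gives
\[
\kappa_s'=-\frac{1}{\lambda_s^\ast}W_s(\lambda_s^\ast)\bar L_s(\lambda_s^\ast).
\]
At the interior maximiser the first-order condition $J_s'(\lambda_s^\ast)=0$ reads $-\lambda\,\partial_\lambda\log m_s=-\log m_s$, that is, $W_s(\lambda_s^\ast)=-\log m_s(\lambda_s^\ast)=\lambda_s^\ast\kappa_s$. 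Hence $(\log\kappa_s)'=-\bar L_s(\lambda_s^\ast)$, and
\[
\Phi'(s)=\bar L_s(\lambda_s)-\bar L_s(\lambda_s^\ast)\ge0
\]
by Assumption~H.

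\textbf{Main obstacle.} The delicate step is the first-order condition and the envelope identity at $\lambda_s^\ast$. One must check that $m_s$ is differentiable in $\lambda$ at $\lambda_s^\ast$, that the maximiser is genuinely interior so the derivative vanishes, and that the a.e.\ formula in (II.3) combines with the measurable choice of $\lambda_s^\ast$. I would handle this by taking (II.3) as given and deriving $W_s(\lambda^\ast_s)=\lambda_s^\ast\kappa_s$ directly from the stationarity of $J_s$, with differentiability of $m_s$ on $(\lambda_s,\infty)$ coming from dominated convergence. A secondary point is showing that $s\mapsto\lambda_s$ is $C^1$, so that the sum of the two absolutely continuous functions integrates correctly.
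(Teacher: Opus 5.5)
Your proposal is correct and follows the paper's proof. The paper computes $R_s'=\lambda_s'\kappa_s+\lambda_s\kappa_s'$ with $\lambda_s'=\lambda_s\bar b_s(\lambda_s)$ from the implicit function theorem and $\kappa_s'$ from the envelope identity in (II.3). It then uses the stationarity identity $\kappa_s=\E_{s,\lambda_s^\ast}[S]=W_s(\lambda_s^\ast)/\lambda_s^\ast$ to get $R_s'=R_s\bigl(\bar L_s(\lambda_s)-\bar L_s(\lambda_s^\ast)\bigr)$ and integrates using absolute continuity, which is exactly your computation of $\Phi'=(\log R_s)'$. The only cosmetic difference is that working with $\log R_s$ needs $\kappa_s$ bounded away from $0$ on $[0,1]$. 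This holds because $\kappa_s>0$ is continuous on a compact interval, whereas the paper integrates $R_s'$ directly.
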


\subsection*{Guide to the paper}
Section~\ref{sec:cmj-parameters} introduces the standard Crump--Mode--Jagers (CMJ) embedding of $T_n(f)$ and identifies the logarithmic depth and height constants with CMJ/BRW parameters.  Section~\ref{sec:order-geometry} then develops the order-theoretic viewpoint behind Assumptions~D and H: the dual-cone lemma, the common tail-order formulation, the finite-dimensional counterexample to unconditional GRD monotonicity, and concrete verification examples.

The proofs of the two monotonicity theorems are separated by observable.  Section~\ref{sec:depth-proof} proves Theorem~\ref{thm:monotonicity} by interpolating between $g$ and $f$, applying a gauge normalisation, and reducing the sign of $Q_\theta'$ to the depth-profile tail inequalities through Abel summation.  Section~\ref{sec:height-proof} proves Theorem~\ref{thm:height-monotonicity} using the BRW speed functional $\kappa_f$ and its optimiser $\lambda^{\ast}_f$. Appendix~\ref{app:analytic-regularity} contains the analytic identities and domination arguments used to justify the differentiations and limiting operations.

\section{CMJ embedding and parameter identification}
\label{sec:cmj-parameters}

This section records the standard continuous-time embedding of $T_n(f)$ into a supercritical
Crump--Mode--Jagers (CMJ) branching process, and the corresponding identification of the
logarithmic depth and height constants with CMJ/BRW parameters. All results stated here are
classical; for preferential-attachment trees in this generality see e.g.\ Rudas et al.~\cite{Rudas2007} and the manuscript by Bhamidi~\cite{bhamidi2007universal},
and for CMJ background see the classical works by Jagers and Nerman~\cite{Jagers1975,Nerman1981,JagersNerman1984} and the more recent survey by Holmgren and Janson~\cite{HolmgrenJanson2017}.

\subsection*{Continuous-time embedding}
Fix an attachment function $f:\N_0\to(0,\infty)$. Consider the continuous-time rooted tree process
$(\Upsilon_f(t))_{t\ge 0}$ started from a single root at time $0$, in which (conditionally on
$\Upsilon_f(t)$) each vertex $x$ gives birth to a new child at instantaneous rate
$f(\deg_{\Upsilon_f(t)}(x))$, independently across vertices. Let
\[
\sigma_n := \inf\{t\ge 0:\ |\Upsilon_f(t)| = n\},\qquad n\ge 1,
\]
be the $n$th birth time. This is the standard ``competing exponentials'' argument for continuous-time branching constructions~\cite{Rudas2007,HolmgrenJanson2017}.

\begin{lemma}\label{lem:ct-embedding}
The discrete-time process $(\Upsilon_f(\sigma_n))_{n\ge 1}$ has the same law as the preferential attachment
tree process $(T_n(f))_{n\ge 1}$.
\end{lemma}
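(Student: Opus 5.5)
The plan is the standard competing-exponentials argument combined with the strong Markov property at the birth times $\sigma_n$. First we make sure the process is well defined and that every $\sigma_n$ is almost surely finite. At any time the configuration is a finite tree $\tau$ with $|\tau|=k$ vertices. Each vertex $x$ carries an independent exponential clock of rate $f(\deg_\tau(x))$, so the time until the next birth is exponential with rate $S(\tau):=\sum_{x\in\tau} f(\deg_\tau(x))$. This rate is finite and strictly positive, since $f>0$. Hence $(\Upsilon_f(t))_{t\ge0}$ is a continuous-time Markov jump chain on the countable set of finite rooted labelled trees, with finite total jump rate in every state. Its jump times are exactly $\sigma_2<\sigma_3<\cdots$, with $\sigma_1=0$, and each jump adds exactly one vertex. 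So the embedded jump chain is precisely $(\Upsilon_f(\sigma_n))_{n\ge1}$, where the vertex born at $\sigma_n$ is labelled $n$. Each holding time is exponential with finite positive rate, hence a.s.\ finite, so every $\sigma_n<\infty$ a.s. Note that the lemma does not need $\sigma_n\to\infty$ (non-explosion (I.2)), because it concerns only finitely many jumps at a time.

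Next we compute the transition law of the embedded chain. Condition on $\Upsilon_f(\sigma_{n-1})=\tau$ with $|\tau|=n-1$. By the strong Markov property at the stopping time $\sigma_{n-1}$ and memorylessness, the residual clocks are independent exponentials with rates $f(\deg_\tau(i))$, $i\in[n-1]$. Here we use that degrees are frozen between births, so the rates are constant on $[\sigma_{n-1},\sigma_n)$. The elementary competing-exponentials fact then applies: if $E_i\sim\mathrm{Exp}(a_i)$ are independent, then
\[
\PP\bigl(\arg\min_i E_i=j\bigr)=\frac{a_j}{\sum_i a_i},
\]
and ties have probability zero. It follows that the parent of vertex $n$ is $i$ with probability $f(\deg_\tau(i))/\sum_{j\le n-1} f(\deg_\tau(j))$, independently of $\Upsilon_f(\sigma_1),\dots,\Upsilon_f(\sigma_{n-2})$ given $\tau$. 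This is exactly the recursion defining $T_n(f)$.

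The conclusion follows by induction on $n$. Both processes start from the single root, $\Upsilon_f(\sigma_1)=T_1(f)$. They are Markov chains on labelled trees with identical one-step transition kernels, so all finite-dimensional distributions agree. The law of a discrete-time process is determined by its finite-dimensional distributions, so the two laws coincide.

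The only step needing care is the Markov/memoryless justification after $\sigma_{n-1}$. After a birth, the parent's rate changes from $f(k)$ to $f(k+1)$ and a new clock of rate $f(0)$ starts. I would handle this cleanly by constructing the process from a family of independent exponential clocks, indexed by vertex and current degree, and using the jump-chain/holding-time description of continuous-time Markov chains. Everything else is routine.
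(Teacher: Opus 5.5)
Your proposal is correct and follows the paper's proof: competing exponential clocks at each vertex reproduce the preferential-attachment transition probabilities at every birth time, and induction over the jump chain identifies the two laws. Your additional care with the strong Markov property at $\sigma_{n-1}$ and your observation that non-explosion is not needed are sound refinements of the paper's short argument.
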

For self-containedness we include the short proof in Appendix~\ref{app:continuous-time-embedding}.

\subsection*{Laplace transform and CMJ parameters}

Fix $f:\N_0\to(0,\infty)$ and let $(X_f(t))_{t\ge0}$ be the pure-birth process on $\Z_{\ge0}$ started from
$X_f(0)=0$, with jump rates $k\to k+1$ at rate $f(k)$. Write $\tau_n$ for the time of the $n$th jump and
define the reproduction point process
\[
\xi_f \;:=\; \sum_{n\ge 1}\delta_{\tau_n}.
\]
In the CMJ tree $(\Upsilon_f(t))_{t\ge0}$ each vertex reproduces according to an independent copy of $\xi_f$,
with birth times measured relative to its own birth time. Thus $(\Upsilon_f(t))_{t\ge0}$ is a one-type CMJ
process with reproduction process $\xi_f$.

For $\lambda>0$ define the Laplace transform
\begin{equation}\label{eq:mf-def}
m_f(\lambda)
:=
\E\Big[\int_{[0,\infty)} e^{-\lambda t}\,\xi_f(dt)\Big]
=
\E\Big[\sum_{n\ge 1} e^{-\lambda \tau_n}\Big].
\end{equation}
Since $\tau_n=\sum_{i=0}^{n-1}E_i$ with independent $E_i\sim \mathrm{Exp}(f(i))$, one obtains the series form
\begin{equation}\label{eq:mf-series}
m_f(\lambda)
=
\sum_{n\ge 1}\ \prod_{i=0}^{n-1}\frac{f(i)}{f(i)+\lambda}.
\end{equation}
Where finite, $\lambda\mapsto m_f(\lambda)$ is strictly decreasing. In the supercritical Malthusian regime
(standard CMJ hypotheses) there is a unique $\lambda_f>0$ solving the Euler--Lotka equation
\[
m_f(\lambda_f)=1,
\]
called the \emph{Malthusian parameter}~\cite{Jagers1975}.

Assuming differentiability at $\lambda_f$ and finiteness of the derivative, set
\begin{equation*}
Q_f := -\lambda_f m_f'(\lambda_f).
\end{equation*}

\begin{remark}
In CMJ language, $m_f(\lambda)=\int_0^\infty e^{-\lambda t}\,\nu_f(dt)$ is the Laplace transform of the mean
reproduction measure $\nu_f(dt):=\E[\xi_f(dt)]$, and $-m_f'(\lambda_f)=\int_0^\infty t\,e^{-\lambda_f t}\,\nu_f(dt)$ is the
``mean age at childbearing'' under the Malthusian tilt.
\end{remark}

Convergence of the Malthusian martingale controls population growth in time, but does not by itself identify the mean generation of the $n$th birth. We therefore state the required mean-depth identification separately in (I.5). The analytic ingredient used in the comparison proof is the following tail identity.
\begin{lemma}\label{lem:cmj-depth-parameters}
Suppose that $m_f(\lambda_f)=1$ has a unique solution and that $0<-m_f'(\lambda_f)<\infty$. Then $Q_f:=-\lambda_fm_f'(\lambda_f)\in(0,\infty)$ and
\begin{equation}\label{eq:mfprime-tail}
-m_f'(\lambda_f)
=\sum_{i\ge 0}\frac{r_i^{(f)}}{f(i)+\lambda_f},
\qquad
r_i^{(f)}
:=\sum_{n\ge i+1}\,\prod_{k=0}^{n-1}\frac{f(k)}{f(k)+\lambda_f}.
\end{equation}
\end{lemma}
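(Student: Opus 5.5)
The claim $Q_f\in(0,\infty)$ follows at once from $\lambda_f>0$ and the hypothesis $0<-m_f'(\lambda_f)<\infty$. The real content is the tail identity \eqref{eq:mfprime-tail}. The plan is to differentiate the series \eqref{eq:mf-series} term by term, rewrite each derivative as a finite sum, and then interchange the order of summation.

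\textbf{Step 1: differentiate one term.} Write $A_n(\lambda):=\prod_{i=0}^{n-1}\frac{f(i)}{f(i)+\lambda}$. Logarithmic differentiation gives
\[
-A_n'(\lambda)=A_n(\lambda)\sum_{i=0}^{n-1}\frac{1}{f(i)+\lambda}.
\]
This is nonnegative, so every summand of $-m_f'$ has a sign. Probabilistically, $-A_n'(\lambda)=\E[\tau_n e^{-\lambda\tau_n}]$, and the expression above is the expectation of $\tau_n=\sum_{i<n}E_i$ under the $\lambda$-tilt. Under that tilt the $E_i$ are independent $\mathrm{Exp}(f(i)+\lambda)$, which confirms the formula.

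\textbf{Step 2: justify term-by-term differentiation.} Since $A_n(\lambda)$ is decreasing in $\lambda$ and each $-A_n'\ge0$, I would argue via monotone convergence. For $\lambda$ near $\lambda_f$ write
\[
m_f(\lambda_f)-m_f(\lambda)=\sum_n\bigl(A_n(\lambda_f)-A_n(\lambda)\bigr).
\]
The difference quotients $(A_n(\lambda_f)-A_n(\lambda))/(\lambda-\lambda_f)$ are nonnegative. Taking $\lambda\downarrow\lambda_f$, Fatou's lemma gives $\sum_n(-A_n'(\lambda_f))\le -m_f'(\lambda_f)<\infty$.

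For the reverse inequality I would use convexity of each $A_n$ in $\lambda$. This holds because $A_n(\lambda)=\E[e^{-\lambda\tau_n}]$ is a Laplace transform. By convexity, for $\lambda>\lambda_f$ the difference quotient is bounded by $-A_n'(\lambda_f)$. Dominated convergence then applies, and the limit equals $\sum_n(-A_n'(\lambda_f))$. Together these show that the one-sided derivative equals the series. Since the derivative exists by hypothesis, we get $-m_f'(\lambda_f)=\sum_n\E[\tau_ne^{-\lambda_f\tau_n}]$.

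This justification step is the only delicate point. Convexity of Laplace transforms avoids the need for any extra envelope condition.

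\textbf{Step 3: Tonelli swap.} All terms are nonnegative, so the order of summation may be exchanged freely:
\[
-m_f'(\lambda_f)=\sum_{n\ge1}\sum_{i=0}^{n-1}\frac{A_n(\lambda_f)}{f(i)+\lambda_f}
=\sum_{i\ge0}\frac{1}{f(i)+\lambda_f}\sum_{n\ge i+1}A_n(\lambda_f)
=\sum_{i\ge0}\frac{r_i^{(f)}}{f(i)+\lambda_f}.
\]
This is \eqref{eq:mfprime-tail}. As a byproduct, every $r_i^{(f)}\le m_f(\lambda_f)=1$ is finite.
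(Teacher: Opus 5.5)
Your proposal is correct and follows the paper's proof: differentiate each finite product $A_n$, then exchange the $n$- and $i$-sums by Tonelli to get the tail form. The one place you go beyond the paper is Step 2, where the paper simply asserts term-by-term differentiation and you justify it by convexity: the right difference quotients of each $A_n$ are nonnegative and bounded by $-A_n'(\lambda_f)$. Because you only use right-hand quotients, your argument also covers the case where $m_f$ is infinite to the left of $\lambda_f$.
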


\begin{remark}\label{rem:normalise-lambda}
The discrete PA law (and thus $Q_f$) is invariant under scaling: if $(cf)(k):=c\,f(k)$ with $c>0$, then
$\lambda_{cf}=c\,\lambda_f$ and $Q_{cf}=Q_f$. In particular, one may normalise to $\lambda_f=1$ without changing $Q_f$.
\end{remark}

We next recall the standard reduction of the CMJ height to an extremal problem in a branching random walk
(BRW). This viewpoint is classical for age-dependent branching processes~\cite{Kingman1975,Biggins1976,Biggins1977,HolmgrenJanson2017}. For more applications to increasing/PA-type trees, see e.g.\
Broutin and Devroye~\cite{BroutinDevroye2006}, Broutin et al.~\cite{BroutinDevroyeMcLeishdeLaSalle2008} and Janson~\cite{Janson2019}.

Let $Z_f(t):=|\Upsilon_f(t)|$ be the CMJ population size. Under the standard Jagers--Nerman conditions for the population characteristic, including the relevant $x\log x$ integrability, one has the almost sure non-degenerate limit~\cite{Nerman1981,JagersNerman1984}
\begin{equation}\label{eq:cmj-growth-height}
e^{-\lambda_f t}Z_f(t)\longrightarrow \mathcal W_f,
\qquad t\to\infty.
\end{equation}
Recall the birth time of the $n$th individual
\begin{equation*}
\sigma_n \;=\; \inf\{t\ge 0:\ Z_f(t)\ge n\}.
\end{equation*}
Then \eqref{eq:cmj-growth-height} yields the logarithmic size--time correspondence
\begin{equation}\label{eq:sigma-asymp-height}
\sigma_n \;=\; \frac{1}{\lambda_f}\log n \;+\; o(\log n),
\qquad n\to\infty.
\end{equation}

For an individual $v$ in the CMJ genealogy, write $|v|$ for its generation and $\mathrm{birth}(v)$ for its birth time.
Define the CMJ height at time $t$ by
\begin{equation*}
H_f(t)\;:=\;\max\{|v|:\ \mathrm{birth}(v)\le t\}.
\end{equation*}
Since the discrete PA tree on $n$ vertices is the CMJ genealogy observed at time $\sigma_n$, we have the identity
\begin{equation}\label{eq:Hn-equals-Ht-height}
H_n(f)\;=\; H_f(\sigma_n),
\qquad n\ge1.
\end{equation}

Let now $\mathcal B_k := \{\mathrm{birth}(v):\ |v|=k\}$ be the multiset of birth times in generation $k$.
Then $(\mathcal B_k)_{k\ge0}$ forms a BRW on $\R_+$ with displacement point process $\xi_f$.
Let
\begin{equation*}
\mathcal M_k
:=\inf\{t\ge0: \mathcal B_k\cap[0,t]\neq\emptyset\}
\end{equation*}
be the earliest birth time in generation $k$. Then
\begin{equation*}
H_f(t)\;=\;\max\{k\ge 0:\ \mathcal M_k\le t\}.
\end{equation*}

Under standard BRW regularity, the leftmost position has deterministic speed: there exists $\kappa_f\in(0,\infty)$ such that
\begin{equation}\label{eq:Mk-speed-height}
\frac{\mathcal M_k}{k}\longrightarrow \kappa_f,
\qquad k\to\infty,
\end{equation}
and $\kappa_f$ admits a variational representation in terms of $m_f$, for the details see e.g.\ Biggins~\cite{Biggins1976} or McDiarmid~\cite{McDiarmid1995}.

\begin{lemma}\label{lem:height-variational}
Assume the BRW regularity hypotheses ensuring \eqref{eq:Mk-speed-height}. For $\lambda>\lambda_f$ define
\begin{equation*}
J_f(\lambda)\;:=\;\frac{-\log m_f(\lambda)}{\lambda}.
\end{equation*} Then the leftmost-displacement speed $\kappa_f$
admits the variational representation
\begin{equation}\label{eq:kappa-var}
\kappa_f \;=\;\sup_{\lambda>\lambda_f} J_f(\lambda)
\;=\;\sup_{\lambda>\lambda_f}\frac{-\log m_f(\lambda)}{\lambda}.
\end{equation}
Moreover, in the regimes used below, the supremum in \eqref{eq:kappa-var} is assumed to be attained at an interior maximiser $\lambda^{\ast}_f\in(\lambda_f,\infty)$.
\end{lemma}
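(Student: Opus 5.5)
The plan is to derive \eqref{eq:kappa-var} from Kingman's first-birth theorem~\cite{Kingman1975} (see also Biggins~\cite{Biggins1976}). The work is to translate Kingman's Legendre-type formula into the form $\sup_{\lambda>\lambda_f}J_f(\lambda)$.

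\textbf{Kingman's characterisation.} For $\theta\ge0$ set $\mu(a):=\inf_{\theta\ge0}e^{\theta a}m_f(\theta)$. Kingman shows that, under the BRW regularity hypotheses (finite exponential moments, $\mu$ finite near the relevant $a$), $\mathcal M_k/k\to\gamma:=\inf\{a:\mu(a)>1\}$ almost surely. Here $m_f$ is the Laplace transform \eqref{eq:mf-def} of the displacement process $\xi_f$. So it suffices to identify $\gamma$ with $\sup_{\lambda>\lambda_f}J_f(\lambda)$.

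For intuition, and as a check on signs, I would recall both halves of Kingman's argument. The upper bound on $\PP(\mathcal M_k\le ak)$ is a first-moment computation. By the branching property, the expected number of generation-$k$ points in $[0,ak]$ is at most $e^{\theta ak}\E\big[\sum_{|v|=k}e^{-\theta\,\mathrm{birth}(v)}\big]=\big(e^{\theta a}m_f(\theta)\big)^k$ for every $\theta\ge0$. Optimising over $\theta$ and applying Borel--Cantelli gives $\liminf_k\mathcal M_k/k\ge\gamma$. The matching bound $\limsup_k\mathcal M_k/k\le\gamma$ is the genuinely branching part of the argument: a truncated Galton--Watson process is embedded along blocks of generations, together with subadditivity (Hammersley--Kingman). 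I would cite this rather than reprove it, and I expect it to be the main obstacle if a self-contained proof were required.

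\textbf{Rewriting the constant.} The condition $\mu(a)\le1$ is equivalent to the inequality $\theta a+\log m_f(\theta)\le0$ holding for some $\theta$, or in the limit. The limiting case is handled because $\log m_f$ is convex (Laplace transforms are log-convex) and continuous on the interior of its domain, so the infimum defining $\mu$ can be approximated along finite values. For $\theta>0$ the inequality reads $a\le-\log m_f(\theta)/\theta=J_f(\theta)$. Hence
\[
\gamma=\sup\{a:\mu(a)\le1\}=\sup_{\theta>0}J_f(\theta).
\]

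\textbf{Restricting to $\lambda>\lambda_f$.} Since $m_f$ is strictly decreasing and $m_f(\lambda_f)=1$, we have $m_f(\theta)\ge1$ for $\theta\le\lambda_f$. On that range $J_f(\theta)\le0$, or $m_f(\theta)=\infty$ and $J_f(\theta)=-\infty$. By contrast, $J_f(\lambda)>0$ for every $\lambda>\lambda_f$. Therefore the supremum is unchanged when restricted to $\lambda>\lambda_f$, and $\kappa_f=\gamma\in(0,\infty)$. Finiteness follows because $m_f(\lambda)\ge A_1(\lambda)=f(0)/(f(0)+\lambda)$, so $J_f(\lambda)\le\log(1+\lambda/f(0))/\lambda$, which is bounded for $\lambda\ge\lambda_f$.

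The final clause, that the supremum is attained at an interior point, is part of the standing hypothesis and not something to prove. I would only note why it is plausible: $J_f(\lambda_f)=0$ and $J_f(\lambda)\to0$ as $\lambda\to\infty$ by the bound above, while $J_f>0$ in between. So $J_f$, being continuous on $(\lambda_f,\infty)$, does attain its supremum at some interior $\lambda_f^\ast$.
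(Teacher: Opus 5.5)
Your proposal is correct and follows the same route as the paper, which gives no independent argument and simply appeals to the Kingman--Biggins first-birth theorem. Your translation of Kingman's $\gamma=\inf\{a:\mu(a)>1\}$ into $\sup_{\lambda>\lambda_f}J_f(\lambda)$ is sound: the loosely stated ``in the limit'' case is settled by combining $e^{\theta a}m_f(\theta)\ge e^{\delta(a-\sup J_f)}$ for $\theta\ge\delta$ with $m_f(\theta)\ge m_f(\delta)>1$ for $\theta\le\delta<\lambda_f$. Your closing remark in fact proves more than the paper states. Since $J_f$ is continuous and positive on $(\lambda_f,\infty)$ and tends to $0$ both as $\lambda\downarrow\lambda_f$ (monotone convergence) and as $\lambda\to\infty$ (via $m_f\ge A_1$), an interior maximiser always exists, so that clause need not be imposed as a hypothesis.
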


Combining \eqref{eq:Hn-equals-Ht-height}, \eqref{eq:sigma-asymp-height}, and the speed law \eqref{eq:Mk-speed-height} yields the height constant
\begin{equation*}
\frac{H_n(f)}{\log n}
\;\longrightarrow\;
\frac{1}{\lambda_f\kappa_f},
\qquad n\to\infty,
\end{equation*}
in probability, and in $L^1$ under Assumption~I (I.6).
We therefore set
\begin{equation*}
R_f \;:=\; \lambda_f\kappa_f,
\qquad\text{so that}\qquad
\mathsf{c}^{\ast}_f\;:=\;\frac{1}{R_f}.
\end{equation*}
Under the $L^1$ height hypothesis this is also
\[
\mathsf{c}^{\ast}_f=\lim_{n\to\infty}\frac{\E_f[H_n]}{\log n}.
\]
\begin{lemma}\label{lem:lamstar-characterisation}
Assume that $\lambda^{\ast}_f\in(\lambda_f,\infty)$ is an interior maximiser in \eqref{eq:kappa-var} and that $m_f$ is differentiable at $\lambda^{\ast}_f$.
Then $\lambda^{\ast}_f$ satisfies the stationary condition
\begin{equation*}
\log m_f(\lambda^{\ast}_f)\;=\;\lambda^{\ast}_f\,\frac{m_f'(\lambda^{\ast}_f)}{m_f(\lambda^{\ast}_f)},
\end{equation*}
and consequently
\begin{equation*}
\kappa_f
\;=\;
J_f(\lambda^{\ast}_f)
\;=\;
-\frac{m_f'(\lambda^{\ast}_f)}{m_f(\lambda^{\ast}_f)}
\;=\;
-\partial_\lambda\log m_f(\lambda^{\ast}_f).
\end{equation*}
\end{lemma}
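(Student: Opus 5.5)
The argument is a first-order stationarity computation for an interior maximiser of a differentiable one-variable function. I would treat $J_f(\lambda)=-\log m_f(\lambda)/\lambda$ on the open interval $(\lambda_f,\infty)$.

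\emph{Setting up the derivative.} On this interval $m_f(\lambda)<m_f(\lambda_f)=1$, by the strict monotonicity of $m_f$ recorded after \eqref{eq:mf-series}. Also $m_f(\lambda)>0$, since it is a sum of positive terms. Hence $\log m_f$ is well defined and negative there. Moreover $J_f$ is differentiable at $\lambda^\ast_f$, because $m_f$ is differentiable there by hypothesis and positive. The quotient rule gives
\[
J_f'(\lambda)=\frac{-\lambda\,\frac{m_f'(\lambda)}{m_f(\lambda)}+\log m_f(\lambda)}{\lambda^2}.
\]

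\emph{Stationarity.} Since $\lambda^\ast_f$ is an interior maximiser of a function differentiable at that point, Fermat's theorem gives $J_f'(\lambda^\ast_f)=0$. Multiplying the numerator condition through by $\lambda^{\ast2}_f\neq 0$ yields exactly
\[
\log m_f(\lambda^\ast_f)=\lambda^\ast_f\,\frac{m_f'(\lambda^\ast_f)}{m_f(\lambda^\ast_f)}.
\]

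\emph{Value identity.} By \eqref{eq:kappa-var} and attainment of the supremum, $\kappa_f=J_f(\lambda^\ast_f)$. Substituting the stationary relation into $J_f(\lambda^\ast_f)=-\log m_f(\lambda^\ast_f)/\lambda^\ast_f$ cancels the factor $\lambda^\ast_f$ and gives
\[
\kappa_f=-\frac{m_f'(\lambda^\ast_f)}{m_f(\lambda^\ast_f)}=-\partial_\lambda\log m_f(\lambda^\ast_f).
\]

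\emph{Main obstacle.} The only delicate point is justifying that $J_f$ is actually differentiable at $\lambda^\ast_f$ and that the maximum is interior. Both are part of the hypotheses, namely the assumed differentiability of $m_f$ and the interior maximiser from Lemma~\ref{lem:height-variational}. Given these, no further analytic input is needed, since finiteness of $m_f$ at $\lambda^\ast_f>\lambda_f$ follows from monotonicity.
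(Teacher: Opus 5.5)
Your proposal is correct and is essentially the argument the paper has in mind: the lemma is stated without a written proof in Section~\ref{sec:cmj-parameters}, and the same Fermat first-order condition appears in Remark~\ref{rem:grd-counterexample}, where the optimiser is the zero of $F_t(x)=\log m_t(x)-x\,m_t'(x)/m_t(x)=x^2\,\partial_x\bigl(-\log m_t(x)/x\bigr)$. You also correctly handle the one technical point, namely that $m_f$ is finite and positive on a neighbourhood of $\lambda^\ast_f$ because it is decreasing with $m_f(\lambda_f)=1$, so $J_f$ is well defined there and Fermat's theorem applies.
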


\begin{remark}
In typical applications one first rescales $f$ and $g$ so that $\lambda_f=\lambda_g=1$ (Remark~\ref{rem:normalise-lambda}).
Under the regularity assumptions ensuring that $(\theta,\lambda)\mapsto m_{f_\theta}(\lambda)$ is finite and continuous on $[0,1]\times[\lambda_0,\infty)$ for some $\lambda_0>0$
and strictly decreasing in $\lambda$, the implicit equation $m_{f_\theta}(\lambda_\theta)=1$ defines a continuous map $\theta\mapsto \lambda_\theta$.
Since $\lambda_\theta>0$ for each $\theta$ and $[0,1]$ is compact, one may then take
\(
\lambda_-:=\min_{\theta\in[0,1]}\lambda_\theta>0
\)
in Assumptions~II (II.1) and (II.2).
\end{remark}

\section{Order geometry, counterexamples, and verification}
\label{sec:order-geometry}

This section explains the order-theoretic content of Assumptions~D and H using the CMJ notation from Section~\ref{sec:cmj-parameters}.  The guiding idea is simple: GRD gives an increasing perturbation direction, while the branching-process constants respond to that direction through signed first-variation measures.  The monotonicity proofs work when those signed measures have nonnegative pairing with increasing test functions.

\begin{lemma}\label{lem:increasing-dual-cone}
Let $\mu=(\mu_k)_{k\ge0}$ be a finite signed measure on $\N_0$ with total mass zero.  For a sequence $b=(b_k)_{k\ge0}$ set
\begin{equation}\label{eq:dual-cone-pairing}
\langle b,\mu\rangle:=\sum_{k\ge0}b_k\mu_k
\end{equation}
whenever this series is absolutely convergent, and write
\begin{equation}\label{eq:dual-cone-tail}
\mu([\ell,\infty)):=\sum_{k\ge\ell}\mu_k,\qquad \ell\ge1.
\end{equation}
Then the following are equivalent:
\begin{enumerate}[label=(\roman*)]
\item $\langle b,\mu\rangle\ge0$ for every nondecreasing sequence $b=(b_k)_{k\ge0}$ for which \eqref{eq:dual-cone-pairing} is absolutely convergent;
\item the upper tails of $\mu$ are nonnegative:
\[
\mu([\ell,\infty))\ge0,
\qquad \ell\ge1.
\]
\end{enumerate}
In particular, for probability measures $\rho$ and $\eta$ on $\N_0$,
\[
\rho\succeq_{\mathrm{st}}\eta
\quad\Longleftrightarrow\quad
\sum_{k\ge0}b_k(\rho_k-\eta_k)\ge0
\quad\text{for every bounded nondecreasing }b,
\]
and the same conclusion holds for every nondecreasing $b$ for which the displayed sum is absolutely convergent.
\end{lemma}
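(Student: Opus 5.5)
The plan is to prove both implications by Abel summation. Set $M_\ell:=\mu([\ell,\infty))$ for $\ell\ge0$. Since $\mu$ is finite and has total mass zero, $M_0=0$, and $M_\ell\to0$ as $\ell\to\infty$. Moreover $\mu_k=M_k-M_{k+1}$.

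\emph{(ii)$\Rightarrow$(i).} Fix a nondecreasing $b$ for which the pairing converges absolutely. For finite $K$ I will use the identity
\[
\sum_{k=0}^{K}b_k\mu_k=b_0M_0+\sum_{\ell=1}^{K}(b_\ell-b_{\ell-1})M_\ell-b_KM_{K+1}.
\]
Here $M_0=0$, and each summand $(b_\ell-b_{\ell-1})M_\ell$ is nonnegative by monotonicity of $b$ and (ii). The middle sum is therefore nondecreasing in $K$ and has a limit in $[0,\infty]$.

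The remaining task is to show that the boundary term $b_KM_{K+1}$ has $\liminf$ at least $0$, or tends to $0$. This is the main obstacle, because $b$ may be unbounded.

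If $b$ is bounded, the boundary term tends to $0$ because $M_{K+1}\to0$. If $b_K\to+\infty$, then $b_K>0$ for large $K$, and $M_{K+1}\ge0$, so $-b_KM_{K+1}\le0$. That sign goes the wrong way for a lower bound, so a direct estimate is needed:
\[
|b_KM_{K+1}|\le \sum_{k>K}|b_K||\mu_k|\le\sum_{k>K}|b_k||\mu_k|\quad\text{once }b_K\ge0,
\]
using $|b_k|\ge b_K\ge0$ for $k>K$. The right-hand side is the tail of an absolutely convergent series, so it tends to $0$. Hence $b_KM_{K+1}\to0$ in every case. It follows that $\langle b,\mu\rangle=\sum_{\ell\ge1}(b_\ell-b_{\ell-1})M_\ell\ge0$.

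\emph{(i)$\Rightarrow$(ii).} For $\ell\ge1$, test (i) against the indicator $b=\1_{[\ell,\infty)}$. This sequence is bounded and nondecreasing, so the pairing is absolutely convergent because $\mu$ is finite. Then $\langle b,\mu\rangle=M_\ell\ge0$.

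For the corollary on probability measures, apply the lemma to $\mu=\rho-\eta$, which has total mass zero. Condition (ii) then reads $\rho([\ell,\infty))\ge\eta([\ell,\infty))$ for all $\ell$, which is precisely $\rho\succeq_{\mathrm{st}}\eta$.

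For the "bounded nondecreasing $b$" version, the direction (ii)$\Rightarrow$(i) restricted to bounded $b$ is already covered. The converse only used indicators, which are bounded. The statement for all absolutely convergent nondecreasing $b$ is exactly the (ii)$\Rightarrow$(i) direction shown above.
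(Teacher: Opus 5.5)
Your proof is correct and follows essentially the same route as the paper: Abel summation rewrites $\langle b,\mu\rangle$ as $\sum_{\ell\ge1}(b_\ell-b_{\ell-1})\,\mu([\ell,\infty))$, and testing against the indicators $\1_{\{k\ge\ell\}}$ gives the converse. The paper handles unbounded $b$ by truncating to $b_{\min\{k,N\}}-b_0$ and appealing to absolute convergence; your bound $|b_KM_{K+1}|\le\sum_{k>K}|b_k||\mu_k|$ is exactly the estimate that makes that limit step rigorous.
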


\begin{proof}
For a bounded nondecreasing $b$, write
$b_k=b_0+\sum_{\ell=1}^k(b_\ell-b_{\ell-1})$.  Since $\sum_k\mu_k=0$,
\[
\sum_{k\ge0}b_k\mu_k
=
\sum_{\ell\ge1}(b_\ell-b_{\ell-1})\,\mu([\ell,\infty)).
\]
This identity first holds for finitely supported truncations and then follows by dominated convergence, since $\mu$ is finite and $b$ is bounded.  Nonnegative tails therefore imply nonnegative pairing with every bounded nondecreasing $b$.

If $b$ is nondecreasing and \eqref{eq:dual-cone-pairing} is absolutely convergent, subtract the constant $b_0$ and truncate by $b^{(N)}_k:=b_{\min\{k,N\}}-b_0$.  The bounded case gives $\langle b^{(N)},\mu\rangle\ge0$, and absolute convergence gives $\langle b^{(N)},\mu\rangle\to\langle b,\mu\rangle$.  Conversely, taking $b_k=\mathbf 1_{\{k\ge\ell\}}$ gives the tail inequalities.  The probability-measure statement is the same assertion applied to $\mu=\rho-\eta$.
\end{proof}

\begin{remark}\label{rem:grd-tangent-duality}
The lemma is the order-theoretic content behind both monotonicity proofs.  Along the multiplicative interpolation
\[
f_s(k)=g(k)\bigl(f(k)/g(k)\bigr)^s,
\]
the tangent in logarithmic coordinates is
\[
b_k=\partial_s\log f_s(k)=\log\frac{f(k)}{g(k)}.
\]
Growth-ratio dominance says exactly that this tangent lies in the cone of nondecreasing sequences.

The role of Assumptions~D and H is to control the first variation of the CMJ/BRW constants against this increasing direction.  For depth the relevant signed measure is
\[
\mu^{D}_k
:=
w_k^{(\theta)}\bigl(\Gamma_k^{(\theta)}-\overline\Gamma_\theta\bigr),
\]
and Assumption~D is exactly $\mu^{D}([\ell,\infty))\ge0$ for every $\ell$, i.e. membership in the dual cone of increasing tests.  For height, Assumption~H is the pair-specific endpoint comparison: it tests the endpoint sensitivity only against the actual GRD tangent $\log h$.  A stronger, score-independent version is the stochastic endpoint order
\[
\nu_{s,\lambda_s}\succeq_{\mathrm{st}}\nu_{s,\lambda_s^\ast},
\qquad
\nu_{s,\lambda}(k):=\frac{w_k^{(s)}(\lambda)}{W_s(\lambda)},
\]
which implies Assumption~H because $k\mapsto\log h(k)$ is nondecreasing under GRD.

In this sense D and H are not stronger forms of GRD.  GRD selects an increasing perturbation direction, while D and H assert positivity of the branching sensitivity against increasing directions.  If one wants a local monotonicity statement uniformly for all increasing GRD tangents at a fixed attachment function, the corresponding dual-cone condition is also necessary: the step functions $\mathbf 1_{\{k\ge\ell\}}$ test exactly the upper-tail inequalities.
\end{remark}

\begin{remark}\label{rem:DH-tail-order}
There is a slightly broader way to view the two profile-order assumptions.  Fix a positive attachment function $\varphi$ and, for $\lambda$ in the domain of $m_\varphi$, write
\[
A_n(\lambda):=\prod_{i=0}^{n-1}\frac{\varphi(i)}{\varphi(i)+\lambda},
\qquad
r_k(\lambda):=\sum_{n\ge k+1}A_n(\lambda),
\qquad
c_k(\lambda):=\frac{1}{\varphi(k)+\lambda}.
\]
Set
\[
\nu_\lambda(k):=
\frac{\lambda c_k(\lambda)r_k(\lambda)}
{\sum_{j\ge0}\lambda c_j(\lambda)r_j(\lambda)}.
\]
The corrected $\lambda$--score of these weights is
\[
\mathcal G_k(\lambda)
:=
\lambda c_k(\lambda)
+
\frac{\lambda}{r_k(\lambda)}
\sum_{j\ge0}c_j(\lambda)r_{\max\{k,j\}}(\lambda),
\]
because
\[
\partial_\lambda\log\!\bigl(\lambda c_k(\lambda)r_k(\lambda)\bigr)
=
\frac{1}{\lambda}-\frac{\mathcal G_k(\lambda)}{\lambda}.
\]
Consequently, for every upper tail,
\[
\partial_\lambda\nu_\lambda([\ell,\infty))
=
-\frac{1}{\lambda}
\sum_{k\ge\ell}\nu_\lambda(k)
\bigl(\mathcal G_k(\lambda)-\E_{\nu_\lambda}\mathcal G(\lambda)\bigr).
\]
At the Malthusian point, with $\varphi$ equal to the interpolated attachment function, the profile $\mathcal G_k(\lambda_\varphi)$ is exactly the depth profile $\Gamma_k$ after the usual gauge normalisation.  Thus Assumption~D is the infinitesimal statement that the tails of $\nu_\lambda$ are decreasing as $\lambda$ passes through the Malthusian point.  Assumption~H is the endpoint version tested only against the increasing score $\log h$: it follows, for example, from the stochastic endpoint order $\nu_{\lambda_s}\succeq_{\mathrm{st}}\nu_{\lambda_s^\ast}$.  The stronger condition that $\lambda\mapsto\nu_\lambda$ is stochastically nonincreasing on $[\lambda_s,\lambda_s^\ast]$ would therefore imply both D and H, but it is not automatic from GRD alone.
\end{remark}

\begin{remark}\label{rem:common-profile-monotonicity}
Assumptions~H and D are the direct forms used in the two derivative arguments, but they have a common stronger verification route. In each case the proof produces a corrected profile, and monotonicity of that profile is enough.

For depth, if for every $\theta\in[0,1]$ the sequence
$k\mapsto\Gamma_k^{(\theta)}$ is nondecreasing, then Assumption~D follows: biasing the weights $w_k^{(\theta)}$ by the nondecreasing factor $\Gamma_k^{(\theta)}$ shifts the resulting probability measure to the right in stochastic order.

For height, if for every $s\in[0,1]$ and every $\lambda\in[\lambda_s,\lambda_s^\ast]$ the corrected height profile
$k\mapsto\widetilde M_k^{(s)}(\lambda)$ from Section~\ref{sec:height-proof} is nondecreasing, then the identity
$\partial_\lambda w_k^{(s)}(\lambda)=-w_k^{(s)}(\lambda)\widetilde M_k^{(s)}(\lambda)$ implies that
$\lambda\mapsto\nu_{s,\lambda}$ is stochastically nonincreasing. Since $k\mapsto\log h(k)$ is nondecreasing under GRD, the endpoint order in Assumption~H follows.

Thus one can verify both profile-order assumptions by the same principle: the relevant corrected regression profile should be increasing. The concrete one-step checks \eqref{eq:depth-onestep-sufficient} and \eqref{eq:onestep-ineq} below are two implementations of this principle.
\end{remark}

\begin{remark}\label{rem:grd-counterexample}
Let
\[
f_t(0)=1,\qquad f_t(1)=2,\qquad f_t(2)=1000t,\qquad f_t(k)=10^6t\quad(k\ge3).
\]
For $t\ge1$ the ratio $f_t/f_1$ is nondecreasing, hence $f_t\succeq_{\mathrm{GR}}f_1$, and these eventually constant attachment functions satisfy the analytic assumptions used above. A rational interval computation gives, for $t=101/100$,
\[
Q_{f_1}\ge 3.065668995785014984785127,
\qquad
Q_{f_{101/100}}\le 3.065210193098909320427347.
\]
The computation is finite-dimensional: with
\[
p_0=\frac{1}{1+\lambda},\qquad
p_1=\frac{2}{2+\lambda},\qquad
p_2=\frac{1000t}{1000t+\lambda},
\]
one has
\[
m_t(\lambda)=p_0+p_0p_1+p_0p_1p_2\,\frac{10^6t+\lambda}{\lambda},
\]
Exact rational bisection isolates the Euler--Lotka roots, and rational interval evaluation of $Q_t=-\lambda_t m_t'(\lambda_t)$ gives the display. Hence $Q$ decreases and $\mathsf{c}_f=1/Q_f$ increases along this GRD perturbation, disproving unconditional depth monotonicity.

The same example also obstructs height monotonicity without Assumption~H. With
\[
R_{f_t}:=\lambda_t\kappa_t,\qquad
\kappa_t:=\sup_{x>\lambda_t}\frac{-\log m_t(x)}{x},
\]
Strict log-convexity of $m_t$ makes
$F_t(x):=\log m_t(x)-x m_t'(x)/m_t(x)$ strictly decreasing, since $F_t'(x)=-x(\log m_t)''(x)<0$. Its unique zero is the optimiser, where $\kappa_t=-m_t'/m_t$; exact rational logarithm bounds isolate it and give
\[
R_{f_1}\ge 1.13769,
\qquad
R_{f_{101/100}}\le 1.13750.
\]
Thus $R$ decreases and $\mathsf{c}^{\ast}_f=1/R_f$ increases along the perturbation, so GRD alone does not imply height monotonicity. The accompanying file \texttt{counterexample\_certificate.py} certifies both displays without floating-point sign decisions.

The example also shows that Assumptions~D and H exclude real obstructions. Along the interpolation from $f_1$ to $f_{101/100}$, the ratio $h=f_{101/100}/f_1$ has a single jump at $k=2$, so the Abel identity in the depth proof gives
\[
Q_\theta'=\log(101/100)\,S_2^{(\theta)}.
\]
Assumption~D would make $Q_\theta$ nondecreasing, contradicting the depth bounds; Assumption~H and Theorem~\ref{thm:height-monotonicity} would make $R_{f_t}$ nondecreasing, contradicting the height bounds. Thus each order fails somewhere along this GRD interpolation.
\end{remark}

\begin{remark}\label{rem:affine-constants}
The `base case' of affine preferential attachment functions
\[
f_\delta(k)=k+\delta,\qquad k\in\mathbb N_0,\qquad \delta>0.
\]
satisfies the standard CMJ/BRW regularity assumptions and is a useful calibration case. In the equivalent language of generalized plane-oriented or linear recursive trees, the logarithmic depth/profile constant goes back to Bergeron--Flajolet--Salvy~\cite{BergeronFlajoletSalvy1992}, while the corresponding height constant for the scale-free family was obtained by Pittel~\cite{Pittel1994}. We include the short CMJ calculation below to fix our out-degree normalisation.

The sublinear verification criteria for Assumption~H below do not apply to affine attachment. Moreover, the multiplicative interpolation between two affine functions is no longer affine, so checking Assumptions~D and H along that interpolation is a separate matter. In the language of Remark~\ref{rem:DH-tail-order}, this would amount to proving the relevant tail monotonicity of the measures $\nu_\lambda$ along the non-affine bridge. We do not use such a statement here, and instead record the affine constants directly.

For every $\lambda>1$, set
\begin{align}
A^{(\delta)}_n(\lambda)
&:=\prod_{j=0}^{n-1}\frac{f_\delta(j)}{f_\delta(j)+\lambda}
=\prod_{j=0}^{n-1}\frac{j+\delta}{j+\delta+\lambda}
=\frac{\Gamma(n+\delta)\,\Gamma(\delta+\lambda)}{\Gamma(\delta)\,\Gamma(n+\delta+\lambda)},
\\[0.3em]
m_\delta(\lambda)
&:=\sum_{n\ge1}A^{(\delta)}_n(\lambda)
=\frac{\delta}{\lambda-1}.
\end{align}
The corresponding Malthusian parameter is
\begin{equation*}
\lambda_\delta=\delta+1.
\end{equation*}
Since $m_\delta'(\lambda)=-\delta/(\lambda-1)^2$, the depth parameter is
\[
Q_{f_\delta}
=-\lambda_\delta m_\delta'(\lambda_\delta)
=\frac{\delta+1}{\delta},
\qquad
\mathsf{c}_{f_\delta}
=\frac{\delta}{\delta+1}.
\]
Moreover, the BRW/height variational problem
\[
\kappa_\delta=\sup_{\lambda>\lambda_\delta}\frac{-\log m_\delta(\lambda)}{\lambda}
=\sup_{\lambda>\delta+1}\frac{\log\big(\frac{\lambda-1}{\delta}\big)}{\lambda}
\]
has a unique interior maximiser $\lambda^{\ast}_\delta\in(\delta+1,\infty)$ given by
\begin{equation*}
\lambda^{\ast}_\delta = 1+\frac{1}{W\big(\frac{1}{\delta e}\big)},
\qquad
\kappa_\delta = W\big(\tfrac{1}{\delta e}\big),
\end{equation*}
where $W$ denotes the principal branch of the Lambert $W$--function.
In particular,
\begin{equation*}
\mathsf{c}^{\ast}_{f_\delta}=\frac{1}{\lambda_\delta\,\kappa_\delta}
=\frac{1}{(\delta+1)\,W\big(\frac{1}{\delta e}\big)}.
\end{equation*}
The maps $\delta\mapsto\mathsf{c}_{f_\delta}$ and $\delta\mapsto \mathsf{c}^{\ast}_{f_\delta}$ are strictly increasing on $(0,\infty)$; for the height constant this follows, for instance, by differentiating $(\delta+1)W(1/(\delta e))$.
Thus, if $0<\delta_1<\delta_2$, then $f_{\delta_1}\succeq_{\mathrm{GR}}f_{\delta_2}$ and
$\mathsf{c}_{f_{\delta_1}}<\mathsf{c}_{f_{\delta_2}}$ and
$\mathsf{c}^{\ast}_{f_{\delta_1}}<\mathsf{c}^{\ast}_{f_{\delta_2}}$, so the affine depth and height constants obey the expected GRD monotonicity directly.
\end{remark}

\begin{remark}\label{rem:one-jump-example}
The assumptions are non-vacuous beyond the explicit affine endpoint formulas.  Consider the bounded attachment functions
\[
u_\tau(0)=1,\qquad
u_\tau(k)=\tau\quad(k\ge1),
\qquad \tau\ge1.
\]
If $1\le \tau_0<\tau_1$, then $u_{\tau_1}\succeq_{\mathrm{GR}}u_{\tau_0}$, and the multiplicative interpolation remains in the same family:
\[
u_{\tau_s}(0)=1,\qquad
u_{\tau_s}(k)=\tau_s\quad(k\ge1),
\qquad
\tau_s:=\tau_0(\tau_1/\tau_0)^s.
\]
This class is simple enough that the profile-order assumptions can be verified by hand.

For fixed $\tau$ put $x:=\sqrt{\tau}$, $q:=x/(1+x)$ and $a:=1/(1+x)$.  The Laplace transform is
\[
m_\tau(\lambda)
=
\sum_{n\ge1}A_n^{(\tau)}(\lambda)
=
\frac{\tau+\lambda}{\lambda(1+\lambda)},
\]
so the Malthusian parameter is $\lambda_\tau=x$.  In the ungauged notation at $\lambda=x$,
\[
r_k=q^k,\qquad
\alpha_0=q,\qquad
\alpha_k=a\quad(k\ge1).
\]
Consequently
\[
w_0=q,\qquad w_k=aq^k\quad(k\ge1),
\]
and the corrected depth profile is
\[
\Gamma_0=3q,\qquad
\Gamma_k=2+(k-1)a\quad(k\ge1),
\qquad
\overline\Gamma=1+2q.
\]
Hence, for every $\ell\ge1$,
\[
\sum_{k\ge\ell}w_k(\Gamma_k-\overline\Gamma)
=
\ell\,a\,q^\ell
\ge0.
\]
Thus Assumption~D holds along the whole interpolation.

For height, the GRD tangent is constant on the upper tail:
\[
\log\frac{u_{\tau_1}(0)}{u_{\tau_0}(0)}=0,
\qquad
\log\frac{u_{\tau_1}(k)}{u_{\tau_0}(k)}
=\log(\tau_1/\tau_0)\quad(k\ge1).
\]
Therefore Assumption~H reduces to monotonicity of the single tail $\nu_{\tau,\lambda}([1,\infty))$.  Directly from the same geometric formulas,
\[
\nu_{\tau,\lambda}([1,\infty))
=
\frac{\tau(1+\lambda)}{\lambda^2+2\tau\lambda+\tau},
\]
and
\[
\frac{d}{d\lambda}\nu_{\tau,\lambda}([1,\infty))
=
-\frac{\tau(\lambda^2+2\lambda+\tau)}
{(\lambda^2+2\tau\lambda+\tau)^2}
<0.
\]
Since the height optimiser satisfies $\lambda_\tau^\ast>\lambda_\tau$, Assumption~H follows for every pair in this family.  This example illustrates the intended use of the profile-order assumptions: once the sensitivity measure has the right tail order, GRD supplies the increasing tangent.
\end{remark}

The preceding examples separate the main phenomena.  Affine attachment gives a benchmark where the constants can be inspected directly, the bounded one-jump class gives a genuine verification of the profile-order assumptions, and the finite-dimensional counterexample shows why GRD alone is too weak.  The following remarks then place these examples in the broader regularity regimes covered by the analytic hypotheses.

\begin{remark}

The analytic conditions (I.2)--(I.4) are standard in CMJ theory and hold in the classical preferential attachment regimes below, but can fail for pathological or near-critical choices of $f$. The mean-depth identification in (I.5) and the height $L^1$ law in (I.6) are explicit probabilistic inputs and must be verified for the model under consideration.

\begin{enumerate}[label=(\roman*)]
\item \textbf{Bounded or sublinear attachment.}
If $f(k)\asymp k^\alpha$ with $0\le \alpha<1$, then
$\sum_{i< n}1/f(i)\asymp n^{1-\alpha}$, so the factors
$A_n(\lambda)=\prod_{i=0}^{n-1}\frac{f(i)}{f(i)+\lambda}$ decay exponentially when $\alpha=0$ and stretched-exponentially when $0<\alpha<1$.
In particular, $m_f(\lambda)<\infty$ for all $\lambda>0$, and the derivative condition
$0<-m_f'(\lambda_f)<\infty$ holds automatically.

\item \textbf{Asymptotically linear attachment.}
If $f(k)\sim ck$ with $c>0$, then $\sum_{i<n}1/f(i)\sim c^{-1}\log n$ and a logarithmic expansion gives
$A_n(\lambda)=n^{-\lambda/c+o(1)}$. Consequently $m_f(\lambda)=\infty$ for $\lambda<c$ and $m_f(\lambda)<\infty$ for $\lambda>c$; asymptotic linearity alone does not decide convergence at $\lambda=c$. Every Malthusian root therefore satisfies $\lambda_f\ge c$. If $\lambda_f>c$, then $\sum_n(\log n)A_n(\lambda_f)<\infty$ and hence $0<-m_f'(\lambda_f)<\infty$. The affine case $f(k)=k+\delta$ lies safely in this interior regime, since $\lambda_f=1+\delta>1=c$ by Remark~\ref{rem:affine-constants}.

\item \textbf{Too-fast growth.}
If $f(k)\gtrsim k\log k$, then $\sum_{i< n}1/f(i)\lesssim \log\log n$, so $A_n(\lambda)$ decays only like a power of $\log n$,
and in particular $m_f(\lambda)=\infty$ for all $\lambda>0$ in many such cases. For even faster growth one may also violate non-explosion.

\item \textbf{The critical linear boundary.}
At $\lambda=c$, lower-order terms determine both convergence and derivative integrability. For example, an eventually increasing choice
$f(k)=ck\bigl(1-2/\log k+O((\log k)^{-2})\bigr)$ gives
$A_n(c)\asymp [n(\log n)^2]^{-1}$. Thus $m_f(c)<\infty$ while $-m_f'(c)=\infty$; after tuning finitely many initial rates so that $m_f(c)=1$, the Malthusian root is the boundary point $\lambda_f=c$ and (I.4) fails. Hence asymptotic linearity by itself does not imply the finite-derivative condition.
\end{enumerate}
\end{remark}

\begin{remark}\label{rem:practical-sublinear-criteria}
Regular variation is not a standing assumption in the theorems; it is useful as a concrete way to verify the tail part of Assumption~H. For sublinear applications, suppose that, along the interpolation
$f_s=g(f/g)^s$, the functions $f_s$ are eventually nondecreasing and regularly varying with a common index $\rho\in[0,1)$, and that the following uniform estimates hold:
\[
  c_-(k+1)^\rho \le f_s(k)\le c_+(k+1)^\rho,
  \qquad
  |\log(f(k)/g(k))|\le C\log(k+2),
\]
with constants independent of $s\in[0,1]$. Then the usual product estimate
\[
  A_n^{(s)}(\lambda)
  \le
  \exp\left\{-c_\lambda n^{1-\rho}\right\},
  \qquad \lambda>0,
\]
holds uniformly in $s$ on compact $\lambda$--intervals bounded away from zero. Consequently (I.2)--(I.4) and the domination condition (II.2) follow from elementary summability estimates; the global monotonicity condition (I.1) is separate, and in the GRD setting it follows from the global monotonicity of $g$ and $h=f/g$. The same bounds are the standard input for the CMJ/BRW laws in (I.5)--(I.6). The remaining analytic condition (II.3) follows from standard envelope-theorem arguments whenever the height variational problem has a unique nondegenerate interior maximiser that stays in a compact $\lambda$--window along the interpolation.

Thus, in this regime, the only genuinely order-theoretic height condition is Assumption~H. Lemma~\ref{lem:rv-height} proves the required one-step monotonicity for all sufficiently large $k$; if the regular-variation estimates are uniform in $s$ (for instance through a uniform Potter bound), the tail threshold can be chosen uniformly. It remains only to rule out a finite initial ``cliff''. A convenient sufficient finite-prefix check is
\[
q_k^{(s)}(\lambda)\,\mathcal R_{k+1}^{(s)}(\lambda)
\ge
c_k^{(s)}(\lambda)-c_{k+1}^{(s)}(\lambda),
\qquad
0\le k<K,\quad
s\in[0,1],\quad
\lambda\in[\lambda_s,\lambda_s^\ast],
\]
where $K$ is any uniform tail threshold. If this finite-index family of inequalities holds, then
$k\mapsto\widetilde M_k^{(s)}(\lambda)$ is nondecreasing on all of $\N_0$, hence the sufficient tail criterion \eqref{eq:height-tail-sufficient} holds, and therefore Assumption~H follows.

These conditions cover the smooth sublinear examples one usually has in mind, such as shifted powers and slowly varying perturbations with no abrupt finite-prefix jumps. Abrupt jumps in the first few attachment weights are exactly the obstruction: the regular variation tail alone cannot control them.
\end{remark}

\section[Proof of depth monotonicity]{Proof of Theorem~\ref{thm:monotonicity}}
\label{sec:depth-proof}

\subsection{Interpolation reduction}

Fix increasing $f,g:\N_0\to(0,\infty)$ with $f\succeq_{\mathrm{GR}} g$, and set
\[
h(k):=\frac{f(k)}{g(k)},\qquad k\in\N_0.
\]
Then $h$ is nondecreasing. Define the interpolation
\begin{equation}\label{eq:f-theta}
f_\theta(k):=g(k)\,h(k)^\theta,\qquad \theta\in[0,1],\ k\in\N_0,
\end{equation}
so that $f_0=g$ and $f_1=f$.

For each $\theta\in[0,1]$, let $m_\theta(\lambda):=m_{f_\theta}(\lambda)$ be the Laplace transform from
\eqref{eq:mf-def} with representation \eqref{eq:mf-series}. By Assumption~II (II.1), there is a unique $\lambda_\theta>0$ such that
\begin{equation*}
m_\theta(\lambda_\theta)=1.
\end{equation*}
Assumption~II (II.2) justifies the differentiations and sum interchanges below; see Appendix~\ref{app:analytic-regularity}. No mean-depth identification is needed for the intermediate $f_\theta$: Assumption~I (I.5) is used only at $\theta=0,1$.

We further define
\begin{equation*}
Q_\theta:=-\lambda_\theta\,\partial_\lambda m_\theta(\lambda_\theta).
\end{equation*}
\begin{lemma}\label{lem:lambda-regularity}
Assume that Assumptions~II (II.1) and (II.2) hold.
Then the solution $\lambda_\theta$ to $m_\theta(\lambda_\theta)=1$ exists, is unique, and the map
$\theta\mapsto\lambda_\theta$ is continuously differentiable on $[0,1]$.
\end{lemma}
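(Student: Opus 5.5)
The plan is to apply the implicit function theorem to $F(\theta,\lambda):=m_\theta(\lambda)-1$ on $[0,1]\times(\lambda_0,\infty)$, with $\lambda_0<\lambda_-$ the constant from (II.2). Existence and uniqueness of $\lambda_\theta$ are given directly by (II.1). So the work is to show that $F$ is $C^1$ in a neighbourhood of the graph $\{(\theta,\lambda_\theta)\}$ and that $\partial_\lambda F(\theta,\lambda_\theta)\neq0$.

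\emph{Step 1: derivative formulas.} On the series \eqref{eq:mf-series}, termwise differentiation of $\log A_n^{(\theta)}(\lambda)=\sum_{i<n}\bigl[\log f_\theta(i)-\log(f_\theta(i)+\lambda)\bigr]$ gives
\[
\partial_\theta \log A_n^{(\theta)}=\sum_{i<n}\log h(i)\,\frac{\lambda}{f_\theta(i)+\lambda},\qquad
\partial_\lambda \log A_n^{(\theta)}=-\sum_{i<n}\frac{1}{f_\theta(i)+\lambda}.
\]
Summing over $n$ and exchanging the order of summation (Abel/Tonelli) turns these into tail forms:
\[
\partial_\theta m_\theta(\lambda)=\sum_{i\ge0}\frac{\lambda\log h(i)\,r_i^{(\theta)}(\lambda)}{f_\theta(i)+\lambda},\qquad
\partial_\lambda m_\theta(\lambda)=-\sum_{i\ge0}\frac{r_i^{(\theta)}(\lambda)}{f_\theta(i)+\lambda}.
\]
The second formula is exactly the identity of Lemma~\ref{lem:cmj-depth-parameters}.

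\emph{Step 2: uniform domination (the main obstacle).} The termwise differentiation and continuity of both derivatives need a summable majorant that is uniform in $(\theta,\lambda)\in[0,1]\times[\lambda_1,\Lambda]$, where $\lambda_0<\lambda_1<\lambda_-$. Three monotonicity facts reduce this to (II.2):
\begin{enumerate}
\item $\lambda\mapsto A_n^{(\theta)}(\lambda)$ is decreasing, so $r_i^{(\theta)}(\lambda)\le r_i^{(\theta)}(\lambda_0)$.
\item $1/(f_\theta(i)+\lambda)\le 1/(f_\theta(i)+\lambda_0)$.
\item $\lambda\le\Lambda$ bounds the prefactor in $\partial_\theta m$.
\end{enumerate}
Hence both derivative series are dominated by $(1+\Lambda)$ times the summand in \eqref{eq:uniform-dominating}, whose sum is finite uniformly in $\theta$. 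The same bound with $\theta$ frozen justifies the differentiation in $\lambda$.

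For differentiation in $\theta$ I would use the mean value theorem on each $A_n^{(\theta)}$. Uniformity in $\theta$ of the majorant holds because (II.2) takes a supremum over $\theta$. Measurability is not an issue since each term is continuous in $\theta$. One subtlety is that the domination is stated in tail form $r_i$ rather than on $\sum_n A_n\sum_{i<n}(\cdots)$. These two are equal by Tonelli once one takes absolute values $|\log h(i)|$, so the bound transfers.

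Dominated convergence then gives joint continuity of $\partial_\theta m_\theta(\lambda)$ and $\partial_\lambda m_\theta(\lambda)$, and hence $F\in C^1$ on $[0,1]\times(\lambda_1,\infty)$. This region contains every $\lambda_\theta$, because $\lambda_\theta\ge\lambda_->\lambda_1$. At the endpoints $\theta=0,1$, use one-sided derivatives, or extend $\theta$ slightly beyond $[0,1]$ if the envelope permits. One-sided derivatives suffice for $C^1$ on the closed interval.

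\emph{Step 3: nondegeneracy and conclusion.} $\partial_\lambda m_\theta(\lambda_\theta)<0$ strictly, because every term in its series is negative and $r_0^{(\theta)}>0$. The implicit function theorem then gives a local $C^1$ solution near each $\theta_0$, and uniqueness from (II.1) identifies it with $\lambda_\theta$. Explicitly,
\[
\lambda_\theta'=-\frac{\partial_\theta m_\theta(\lambda_\theta)}{\partial_\lambda m_\theta(\lambda_\theta)}.
\]
This expression is continuous in $\theta$ by Step 2, provided $\theta\mapsto\lambda_\theta$ is continuous. Continuity follows from the local implicit function theorem solutions together with uniqueness.

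If one worries that $\lambda_\theta$ could escape to infinity, note that $m_\theta(\lambda)\to0$ as $\lambda\to\infty$ uniformly in $\theta$ by domination. This keeps $\lambda_\theta$ in a compact window $[\lambda_-,\Lambda]$, so a single $\Lambda$ works in Step 2.
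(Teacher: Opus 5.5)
Your proposal is correct and follows essentially the paper's route: existence and uniqueness from (II.1), the tail forms of $\partial_\theta m_\theta$ and $\partial_\lambda m_\theta$ dominated via (II.2), strict negativity of $\partial_\lambda m_\theta(\lambda_\theta)$, and the implicit function theorem, with uniqueness identifying the local solution with $\lambda_\theta$. You also add useful detail on the compact $\lambda$-window and the endpoints.

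One step is weaker than you state, though the paper is equally brief there. (II.2) bounds $\sup_\theta\sum_i(\cdots)$, and this is not a $\theta$-independent summable majorant. So ``uniformity because (II.2) takes a supremum over $\theta$'' does not justify dominated convergence in $\theta$ on its own. To close the step, add uniform tail smallness. One route is the gap bound
\[
A_n^{(\vartheta)}(\lambda)\le e^{-(\lambda-\lambda_0)S_n^{(\vartheta)}(\lambda)}A_n^{(\vartheta)}(\lambda_0),
\qquad
S_n^{(\vartheta)}(\lambda):=\sum_{i<n}(f_\vartheta(i)+\lambda)^{-1},
\]
together with $\inf_\vartheta S_n^{(\vartheta)}(\lambda)\to\infty$ uniformly for $\lambda\in[\lambda_1,\Lambda]$. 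This divergence holds because $f_\vartheta\le\max\{f,g\}$, and since $h$ is monotone, $\max\{f,g\}$ equals $g$ everywhere or $f$ eventually. The corresponding reciprocal series diverge because $m_f(\lambda_0),m_g(\lambda_0)<\infty$. With that in place, differentiability in $\theta$ also follows by integrating $\partial_\vartheta A_n^{(\vartheta)}$ in $\vartheta$ and summing with Fubini, instead of using the mean value theorem.
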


\begin{proof}
Existence and uniqueness of $\lambda_\theta$ follow from (II.1). The general tail identity from Appendix~\ref{app:analytic-regularity} and (II.2) give
\[
0<-\partial_\lambda m_\theta(\lambda_\theta)
=\sum_{i\ge0}\frac{r_i^{(\theta)}(\lambda_\theta)}{f_\theta(i)+\lambda_\theta}
\le \sum_{i\ge0}\frac{r_i^{(\theta)}(\lambda_0)}{f_\theta(i)+\lambda_0}<\infty.
\]
Condition (II.2) justifies termwise differentiation in $\theta$ and $\lambda$
and provides a uniform dominating envelope near $\lambda_\theta$.
Hence $(\theta,\lambda)\mapsto m_\theta(\lambda)$ is $C^1$ in a neighbourhood of the curve $\lambda=\lambda_\theta$,
and the implicit function theorem yields
\(
\lambda_\theta'=-\frac{\partial_\theta m_\theta(\lambda_\theta)}{\partial_\lambda m_\theta(\lambda_\theta)}.
\)
\end{proof}

Since $Q_0=Q_g$ and $Q_1=Q_f$, it remains to prove that $Q_\theta$ is nondecreasing. By the mean-value theorem, it is enough to show that $Q_\theta$ is continuous on $[0,1]$, differentiable on $(0,1)$, and $Q_\theta'\ge0$ there. A direct differentiation produces
terms that do not have a definite sign. The purpose of the \emph{gauge transformation} introduced next is to
choose a $\theta$-dependent global rescaling of $f_\theta$ which leaves $Q_\theta$ unchanged
(cf.~Remark~\ref{rem:normalise-lambda}), but eliminates the problematic global term in $Q_\theta'$.

\subsection{Gauge normalisation and additional notation}

A useful observation is that $Q_f$ is invariant under constant rescaling of $f$
(see Remark~\ref{rem:normalise-lambda}).
We exploit this freedom to choose a $\theta$-dependent rescaling (a ``gauge'') that removes the
global $\lambda_\theta'$ contribution when differentiating $Q_\theta$.

Let $f_\theta$ be the GRD interpolation \eqref{eq:f-theta} and let $\lambda_\theta$ be the unique solution of
$m_{f_\theta}(\lambda_\theta)=1$.
Define the gauged family
\begin{equation*}
f_\theta^*(k):=\frac{1}{\lambda_\theta}\,f_\theta(k),\qquad k\in\N_0.
\end{equation*}
By the scaling identity $m_{c f}(\lambda)=m_f(\lambda/c)$, this choice fixes the Malthusian parameter of $f_\theta^*$:
\begin{equation}\label{eq:malthus-fixed}
m_{f_\theta^*}(1)=m_{f_\theta}(\lambda_\theta)=1,\qquad \theta\in[0,1],
\end{equation}
and, moreover, $Q_{f_\theta^*}=Q_{f_\theta}$ by scaling invariance.

\medskip

\noindent\textit{Product weights and tails.}
For $\lambda>0$ define
\[
A_n^{(\theta,*)}(\lambda)
:=\prod_{i=0}^{n-1}\frac{f_\theta^*(i)}{f_\theta^*(i)+\lambda},
\qquad n\ge 0,\qquad (A_0^{(\theta,*)}(\lambda):=1),
\]
and the tail sums
\[
r_i^{(\theta,*)}(\lambda):=\sum_{n\ge i+1}A_n^{(\theta,*)}(\lambda),\qquad i\ge 0.
\]
We will evaluate at $\lambda=1$ and suppress the argument:
\[
A_n^{(\theta,*)}:=A_n^{(\theta,*)}(1),\qquad r_i^{(\theta,*)}:=r_i^{(\theta,*)}(1).
\]
Since $m_{f_\theta^*}(1)=\sum_{n\ge1}A_n^{(\theta,*)}=1$, the sequence $(A_n^{(\theta,*)})_{n\ge1}$ is a probability mass function;
let $N_\theta$ be the $\N$-valued random variable with $\PP(N_\theta=n)=A_n^{(\theta,*)}$.

Define the basic coefficients and weights
\[
\alpha_i^{(\theta)}:=\frac{1}{f_\theta^*(i)+1},\qquad
w_i^{(\theta)}:=\alpha_i^{(\theta)}\,r_i^{(\theta,*)},\qquad i\ge 0.
\]
The tail representation \eqref{eq:mfprime-tail} gives
\begin{equation*}
Q_\theta=Q_{f_\theta^*}=\sum_{i\ge 0} w_i^{(\theta)}.
\end{equation*}

\noindent\textit{Log-derivative profile and centering.}
Set
\begin{equation}\label{eq:b-i-def}
b_i^{(\theta)}:=\log h(i)+a'(\theta),\qquad i\in\N_0,
\end{equation}
where $a(\theta):=-\log\lambda_\theta$ so that $f_\theta^*=e^{a(\theta)}f_\theta$.
Since $h$ is nondecreasing, the map $i\mapsto b_i^{(\theta)}$ is nondecreasing for each fixed $\theta$.

For $\theta\in(0,1)$, differentiating \eqref{eq:malthus-fixed} yields $\partial_\theta m_{f_\theta^*}(1)=0$.
Lemma~\ref{lem:dtheta-m-tail} below rewrites this as the centering condition
\begin{equation}\label{eq:centering}
\sum_{i\ge0} b_i^{(\theta)}\, w_i^{(\theta)}=0.
\end{equation}
Equivalently,
\begin{equation}\label{eq:gauge-choice}
a'(\theta)=-\frac{u_\theta}{Q_\theta},
\qquad
u_\theta:=\sum_{i\ge0}(\log h(i))\,w_i^{(\theta)}.
\end{equation}
This choice removes the global derivative term and is the crucial input for calculating the sign of $Q_\theta'$.
\subsection{Differentiation identities}

We record the basic $\theta$-derivatives of the product terms $A_n^{(\theta,*)}(\lambda)$ and of $m_\theta^*(\lambda)$.
All interchanges of differentiation and summation are justified by our standing analytic assumptions.

\begin{lemma}
For each $n\ge 1$ and $\lambda>0$,
\begin{equation}\label{eq:dtheta-A}
\partial_\theta A_n^{(\theta,*)}(\lambda)
=
A_n^{(\theta,*)}(\lambda)\sum_{i=0}^{n-1}\frac{\lambda}{f_\theta^*(i)+\lambda}\,b_i^{(\theta)},
\end{equation}
where $b_i^{(\theta)}$ is defined in \eqref{eq:b-i-def}.
\end{lemma}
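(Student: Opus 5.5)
The identity is a finite-product computation, so I would prove it by logarithmic differentiation. For fixed $n\ge1$ and $\lambda>0$, the term $A_n^{(\theta,*)}(\lambda)$ is a product of $n$ strictly positive factors. Hence
\[
\partial_\theta\log A_n^{(\theta,*)}(\lambda)=\sum_{i=0}^{n-1}\partial_\theta\Bigl(\log f_\theta^*(i)-\log\bigl(f_\theta^*(i)+\lambda\bigr)\Bigr).
\]
No interchange of limits is involved, because the sum is finite. The only regularity input is that $\theta\mapsto f_\theta^*(i)$ is differentiable. This holds because $f_\theta(i)=g(i)h(i)^\theta$ is smooth in $\theta$, and $\theta\mapsto\lambda_\theta$ is $C^1$ by Lemma~\ref{lem:lambda-regularity}. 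Consequently $a(\theta)=-\log\lambda_\theta$ is $C^1$ as well, since $\lambda_\theta>0$.

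Next I compute the logarithmic derivative of a single rate. From $f_\theta^*(i)=e^{a(\theta)}g(i)h(i)^\theta$ we get
\[
\partial_\theta\log f_\theta^*(i)=a'(\theta)+\log h(i)=b_i^{(\theta)},
\]
which is exactly \eqref{eq:b-i-def}. Writing $x=f_\theta^*(i)$, each factor contributes
\[
\partial_\theta\bigl(\log x-\log(x+\lambda)\bigr)=\frac{\partial_\theta x}{x}-\frac{\partial_\theta x}{x+\lambda}=\frac{\partial_\theta x}{x}\cdot\frac{\lambda}{x+\lambda}=\frac{\lambda}{f_\theta^*(i)+\lambda}\,b_i^{(\theta)}.
\]
Summing over $i=0,\dots,n-1$ and multiplying by $A_n^{(\theta,*)}(\lambda)$ gives \eqref{eq:dtheta-A}.

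I do not expect a real obstacle. The only point needing care is differentiability of the gauge $a(\theta)$, which reduces to Lemma~\ref{lem:lambda-regularity}. At the endpoints $\theta\in\{0,1\}$ the derivative is understood one-sidedly; this is consistent with that lemma giving $C^1$ on $[0,1]$. Summability over $n$ is not needed at this stage, since it only matters later when \eqref{eq:dtheta-A} is summed to differentiate $m_{f_\theta^*}$.
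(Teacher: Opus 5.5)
Your proposal is correct and matches the paper's proof. Both take the logarithmic derivative of each factor $f_\theta^*(i)/(f_\theta^*(i)+\lambda)$, use $\partial_\theta\log f_\theta^*(i)=b_i^{(\theta)}$ to get $\frac{\lambda}{f_\theta^*(i)+\lambda}\,b_i^{(\theta)}$, then sum over $i<n$ and multiply by $A_n^{(\theta,*)}(\lambda)$; your remark that differentiability of the gauge $a(\theta)=-\log\lambda_\theta$ comes from Lemma~\ref{lem:lambda-regularity} makes explicit a point the paper leaves implicit.
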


\begin{proof}
For each $i$,
\[
\partial_\theta \log\Big(\frac{f_\theta^*(i)}{f_\theta^*(i)+\lambda}\Big)
=
\partial_\theta\log f_\theta^*(i)-\partial_\theta\log(f_\theta^*(i)+\lambda)
=
b_i^{(\theta)}-\frac{f_\theta^*(i)}{f_\theta^*(i)+\lambda}\,b_i^{(\theta)}
=
\frac{\lambda}{f_\theta^*(i)+\lambda}\,b_i^{(\theta)}.
\]
Summing over $i=0,\dots,n-1$ and multiplying by $A_n^{(\theta,*)}(\lambda)$ yields \eqref{eq:dtheta-A}.
\end{proof}

\begin{lemma}\label{lem:dtheta-m-tail}
Under Assumptions~II (II.1)--(II.2), there is an open interval $I$ containing $1$ such that, for every
$\theta\in(0,1)$ and $\lambda\in I$,
\begin{equation}\label{eq:dtheta-m-tail}
\partial_\theta m_\theta^*(\lambda)
=
\sum_{i\ge 0}\frac{\lambda}{f_\theta^*(i)+\lambda}\,b_i^{(\theta)}\,r_i^{(\theta,*)}(\lambda).
\end{equation}
\end{lemma}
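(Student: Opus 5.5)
Differentiate the series $m_\theta^*(\lambda)=\sum_{n\ge1}A_n^{(\theta,*)}(\lambda)$ termwise in $\theta$, substitute \eqref{eq:dtheta-A}, and exchange the order of summation. Formally this gives
\[
\partial_\theta m_\theta^*(\lambda)
=\sum_{n\ge1}A_n^{(\theta,*)}(\lambda)\sum_{i=0}^{n-1}\frac{\lambda\,b_i^{(\theta)}}{f_\theta^*(i)+\lambda}
=\sum_{i\ge0}\frac{\lambda\,b_i^{(\theta)}}{f_\theta^*(i)+\lambda}\sum_{n\ge i+1}A_n^{(\theta,*)}(\lambda),
\]
and the inner sum is precisely $r_i^{(\theta,*)}(\lambda)$, which is \eqref{eq:dtheta-m-tail}. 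The substance of the proof is therefore to justify the termwise differentiation and the Fubini exchange uniformly on a neighbourhood of $\lambda=1$.

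\emph{Step 1: translate to ungauged quantities.} Since $f^*_\theta=f_\theta/\lambda_\theta$, we have $A_n^{(\theta,*)}(\lambda)=A_n^{(\theta)}(\lambda\lambda_\theta)$ and $r_i^{(\theta,*)}(\lambda)=r_i^{(\theta)}(\lambda\lambda_\theta)$. By Lemma~\ref{lem:lambda-regularity}, $\theta\mapsto\lambda_\theta$ is continuous on $[0,1]$, so $\lambda_\theta\ge\lambda_-$ and $\lambda_\theta\le\lambda_+<\infty$. Since $\lambda_0<\lambda_-$, we choose
\[
I:=(\lambda_0/\lambda_-,\,2)\ni 1 .
\]
Then $\lambda\lambda_\theta>\lambda_0$ for all $\lambda\in I$ and all $\theta$. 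The products $A_n^{(\theta)}(\mu)$ are decreasing in $\mu$, and so are $r_i^{(\theta)}(\mu)$ and $\mu\mapsto 1/(f_\theta(i)+\mu)$. Hence every relevant term is bounded by its value at $\mu=\lambda_0$.

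\emph{Step 2: Fubini bound.} Next we bound $|b_i^{(\theta)}|$. We have $|b_i^{(\theta)}|\le|\log h(i)|+|a'(\theta)|$, and $a'(\theta)=-\lambda_\theta'/\lambda_\theta$ is bounded on $[0,1]$ by Lemma~\ref{lem:lambda-regularity}. It follows that
\[
|b_i^{(\theta)}|\le C\bigl(1+|\log h(i)|\bigr).
\]
We also have
\[
\frac{\lambda}{f^*_\theta(i)+\lambda}=\frac{\lambda\lambda_\theta}{f_\theta(i)+\lambda\lambda_\theta}\le\frac{2\lambda_+}{f_\theta(i)+\lambda_0}.
\]
Combining these, the absolute double sum $\sum_n\sum_{i<n}$ is bounded by a constant times the quantity in \eqref{eq:uniform-dominating}, which is finite uniformly in $\theta$. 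Tonelli's theorem therefore justifies the exchange of summation.

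\emph{Step 3: termwise differentiation.} Each $A_n^{(\theta,*)}(\lambda)$ is $C^1$ in $\theta$ by \eqref{eq:dtheta-A}, because $a$ is $C^1$ by Lemma~\ref{lem:lambda-regularity}. The bound from Step 2 dominates $\sum_n\sup_\theta|\partial_\theta A_n^{(\theta,*)}(\lambda)|$ uniformly in $\theta\in[0,1]$. Uniform convergence of the differentiated series then allows differentiation under the sum, which gives the claim for $\theta\in(0,1)$ and $\lambda\in I$.

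\emph{Main obstacle.} The expected difficulty is circular: \eqref{eq:dtheta-A} involves $a'(\theta)$, whose existence comes from Lemma~\ref{lem:lambda-regularity}, which itself relies on the appendix tail identity. I would treat that lemma as given. I would also note that the same Fubini argument with $b$ replaced by $\log h$ is what underlies it, so no genuine circularity arises.
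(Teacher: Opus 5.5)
Your route is the paper's: the same formal computation, the same passage to ungauged variables via $A_n^{(\theta,*)}(\lambda)=A_n^{(\theta)}(\lambda\lambda_\theta)$, the same bound $|b_i^{(\theta)}|\le C(1+|\log h(i)|)$, and domination by \eqref{eq:uniform-dominating} plus Tonelli. For fixed $\theta$, bounding by monotonicity at $\lambda_0$ (instead of the appendix's exponential-gap estimate) is enough for Step~2, and your remark on circularity is correct.

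\textbf{The gap is in Step~3.} Step~2 shows that for each fixed $\theta$
\[
\sum_{n\ge1}\bigl|\partial_\theta A_n^{(\theta,*)}(\lambda)\bigr|
\le C\sum_{n\ge1}A_n^{(\theta)}(\lambda_0)\sum_{i<n}\frac{1+|\log h(i)|}{f_\theta(i)+\lambda_0},
\]
and (II.2) bounds the right-hand side uniformly in $\theta$. That is a bound on $\sup_\theta\sum_n$, not on $\sum_n\sup_\theta$. The $n$-th majorant depends on $\theta$ through $f_\theta$, so you cannot move the supremum inside the sum. A uniform bound on $\sum_n|\partial_\theta A_n|$ alone gives neither uniform convergence of the differentiated series nor differentiation under the sum, since mass can drift to large $n$ as $\theta$ varies.

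The swap is not a formality. We have $f_\vartheta(i)=f_\theta(i)\,h(i)^{\vartheta-\theta}$, and for unbounded $h$ the factor $h(i)^{|\vartheta-\theta|}$ is unbounded in $i$. So evaluating everything at the single point $\lambda_0$ cannot by itself give a majorant independent of $\vartheta$ on a neighbourhood of $\theta$. The uniformity that matters here is local uniformity in $\theta$; uniformity in $\lambda$, which you single out as the substance, is not needed for this pointwise-in-$\lambda$ statement. As it stands, Step~2 yields only the integrated identity $m^*_{\theta_2}(\lambda)-m^*_{\theta_1}(\lambda)=\int_{\theta_1}^{\theta_2}G(\vartheta)\,d\vartheta$, with $G$ the right-hand side of \eqref{eq:dtheta-m-tail}. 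That gives the formula for almost every $\theta$, but not for every $\theta$.

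\textbf{What is missing.} You need to exploit the strict gap $\lambda\lambda_\vartheta\ge\lambda\lambda_->\lambda_0$; your choice of $I$ provides it but your argument never uses it. The paper's proof is itself terse here: it chooses $I$ so that $\lambda\lambda_\vartheta>\bar\lambda>\lambda_0$ and asserts a summable envelope uniform for $\vartheta$ near $\theta$.

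One way to make this rigorous is as follows. Since $h$ is nondecreasing, $f_\vartheta(i)$ is increasing in $\vartheta$ where $h(i)\ge1$ and decreasing where $h(i)<1$. Hence, with $K:=\max\{1,h(0)^{-2\varepsilon}\}$, for $|\vartheta-\theta|\le\varepsilon$ and $\mu\ge\lambda\lambda_-$,
\[
A_n^{(\vartheta)}(\mu)\le A_n^{(\theta+\varepsilon)}(\mu/K),
\qquad
\frac{1}{f_\vartheta(i)+\mu}\le\frac{\max\{1,h(i)\}^{2\varepsilon}}{f_{\theta+\varepsilon}(i)+\mu}.
\]
Because $h$ is nondecreasing, the factor $\max\{1,h(i)\}^{2\varepsilon}$ can be moved to the outer index $n>i$. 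Set $S_n=\sum_{j<n}(f_{\theta+\varepsilon}(j)+\mu/K)^{-1}$. Then $h(n)^{2\varepsilon}$ grows at most like $\exp\bigl(O(\varepsilon/\theta)\,S_n\bigr)$, because $f_{\theta+\varepsilon}(n)A_n^{(\theta+\varepsilon)}(\lambda_0)\to0$ by finiteness of $m$ at $\lambda_0$ and monotonicity. This growth is absorbed by the extra decay $A_n(\mu/K)\le A_n(\lambda_0)e^{-(\mu/K-\lambda_0)S_n}$ once $\varepsilon$ is small relative to $\theta$ and to the gap. Then (II.2) at $\theta+\varepsilon$ gives a $\vartheta$-free summable majorant, and dominated convergence completes Step~3.
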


\begin{proof}
Let $\bar\lambda=(\lambda_0+\lambda_-)/2$, where $\lambda_0$ and $\lambda_-$ are from (II.2) and (II.1), respectively.
Choose $\eta\in(0,1)$ so that $\lambda_-(1-\eta)>\bar\lambda$, and set $I=(1-\eta,1+\eta)$.
Fix $\theta\in(0,1)$ and $\lambda\in I$.
We start from the definition
\[
m_\theta^*(\lambda)=\sum_{n\ge 1}A_n^{(\theta,*)}(\lambda),
\qquad
A_n^{(\theta,*)}(\lambda):=\prod_{j=0}^{n-1}\frac{f_\theta^*(j)}{f_\theta^*(j)+\lambda}.
\]
We justify interchanging $\partial_\theta$ with the infinite sum by dominated convergence. For
$\vartheta\in[0,1]$ put $\mu_\vartheta:=\lambda_\vartheta\lambda$. Then
$\mu_\vartheta\ge\lambda_-(1-\eta)>\bar\lambda$, and
\[
A_n^{(\vartheta,*)}(\lambda)=A_n^{(\vartheta)}(\mu_\vartheta),
\qquad
\frac{\lambda}{f_\vartheta^*(j)+\lambda}
=
\frac{\mu_\vartheta}{f_\vartheta(j)+\mu_\vartheta}.
\]
The exponential-gap estimate in Appendix~\ref{app:analytic-regularity}, together with the boundedness of
$\lambda_\vartheta$ and $a'(\vartheta)$ from Lemma~\ref{lem:lambda-regularity}, therefore gives a summable envelope, uniform for $\vartheta$ near $\theta$, for
\[
A_n^{(\vartheta,*)}(\lambda)
\sum_{j=0}^{n-1}
\frac{\lambda\,|b_j^{(\vartheta)}|}{f_\vartheta^*(j)+\lambda}.
\]
Indeed, $|b_j^{(\vartheta)}|\le |\log h(j)|+\sup_{u\in[0,1]}|a'(u)|$, so this is precisely the first-derivative envelope furnished by (II.2) and the gap estimate.
By definition of $b_j^{(\theta)}=\partial_\theta\log f_\theta^*(j)$ we have
\[
\partial_\theta A_n^{(\theta,*)}(\lambda)
=
A_n^{(\theta,*)}(\lambda)\sum_{j=0}^{n-1}\frac{\lambda}{f_\theta^*(j)+\lambda}\,b_j^{(\theta)}.
\]
Therefore,
\[
\partial_\theta m_\theta^*(\lambda)
=
\sum_{n\ge 1}\partial_\theta A_n^{(\theta,*)}(\lambda).
\]

Now expand the derivative and rearrange the resulting double sum:
\[
\sum_{n\ge 1}\partial_\theta A_n^{(\theta,*)}(\lambda)
=
\sum_{n\ge 1}A_n^{(\theta,*)}(\lambda)
\sum_{j=0}^{n-1}\frac{\lambda}{f_\theta^*(j)+\lambda}\,b_j^{(\theta)}
=
\sum_{j\ge 0}\frac{\lambda}{f_\theta^*(j)+\lambda}\,b_j^{(\theta)}
\sum_{n\ge j+1}A_n^{(\theta,*)}(\lambda),
\]
where the exchange of the order of summation is justified by the same absolute-summability bound. Finally, the inner tail equals $r_j^{(\theta,*)}(\lambda)$ by definition,
giving \eqref{eq:dtheta-m-tail}.
\end{proof}

\subsection[Asymptotic expansion of Qtheta]{Asymptotic expansion of \texorpdfstring{$Q_{\theta}$}{Qtheta} and its tail expansion}

We now compute $Q_\theta'$. Recall that $Q_\theta=Q_{f_\theta}=Q_{f_\theta^*}$ by scaling invariance, so we differentiate $Q_\theta$ along the gauged family.

\begin{lemma}\label{lem:Qprime}
The map $\theta\mapsto Q_\theta$ is continuous on $[0,1]$ and differentiable on $(0,1)$. With the gauge choice \eqref{eq:gauge-choice}, its derivative satisfies
\begin{equation}\label{eq:Qprime-double}
Q_\theta'
= \sum_{0\le i\le j}\big(b_i^{(\theta)}+b_j^{(\theta)}\big)\,\alpha_i^{(\theta)}\,\alpha_j^{(\theta)}\,r_j^{(\theta,*)},
\end{equation}
where $\alpha_i^{(\theta)}=(f_\theta^*(i)+1)^{-1}$ and $r_j^{(\theta,*)}=r_j^{(\theta,*)}(1)$.
\end{lemma}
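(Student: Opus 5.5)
The plan is to differentiate the tail representation $Q_\theta=\sum_{i\ge0}\alpha_i^{(\theta)}r_i^{(\theta,*)}$ term by term. All quantities live along the gauged family $f_\theta^*=f_\theta/\lambda_\theta$ evaluated at $\lambda=1$. Then I use the centering identity \eqref{eq:centering} to remove the single-sum term and symmetrise the remaining double sum.

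\textbf{Step 1 (termwise derivatives).} Since $\partial_\theta\log f_\theta^*(i)=b_i^{(\theta)}$, one gets
\[
\partial_\theta\alpha_i^{(\theta)}=-\frac{f_\theta^*(i)\,b_i^{(\theta)}}{(f_\theta^*(i)+1)^2}
=-\alpha_i^{(\theta)}\bigl(1-\alpha_i^{(\theta)}\bigr)b_i^{(\theta)}.
\]
For the tails I sum \eqref{eq:dtheta-A} at $\lambda=1$, where $\lambda/(f_\theta^*(j)+\lambda)=\alpha_j^{(\theta)}$, over $n\ge i+1$. Exchanging the order of summation exactly as in Lemma~\ref{lem:dtheta-m-tail} gives
\[
\partial_\theta r_i^{(\theta,*)}=\sum_{n\ge i+1}A_n^{(\theta,*)}\sum_{j<n}\alpha_j^{(\theta)}b_j^{(\theta)}
=\sum_{j\ge0}\alpha_j^{(\theta)}b_j^{(\theta)}\,r^{(\theta,*)}_{\max\{i,j\}}.
\]

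\textbf{Step 2 (assembling and centering).} By the product rule,
\[
Q_\theta'=-\sum_i b_i w_i+\sum_i\alpha_i^2b_ir_i+\sum_{i,j}\alpha_i\alpha_jb_j\,r_{\max\{i,j\}}
\]
(dropping $\theta$ superscripts). The first sum vanishes by \eqref{eq:centering}, which is exactly where the gauge choice \eqref{eq:gauge-choice} enters. I then split the double sum into the diagonal $i=j$, which contributes $\sum_i\alpha_i^2b_ir_i$, and the off-diagonal pairs. For each pair $i<j$, the ordered pairs $(i,j)$ and $(j,i)$ together contribute $(b_i+b_j)\alpha_i\alpha_jr_j$. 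The diagonal part combines with the middle term to give $\sum_i 2b_i\alpha_i^2r_i=\sum_i(b_i+b_i)\alpha_i\alpha_ir_i$. This is precisely the $i=j$ part of \eqref{eq:Qprime-double}, so the identity follows.

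\textbf{Step 3 (regularity, the main obstacle).} Continuity on $[0,1]$ and differentiability on $(0,1)$, together with the validity of all interchanges, need an integrable envelope uniform for $\vartheta$ near $\theta$. Lemma~\ref{lem:lambda-regularity} gives $\theta\mapsto\lambda_\theta\in C^1$, hence $a'$ is continuous and bounded and $|b_j^{(\vartheta)}|\le|\log h(j)|+C$. I would rewrite the double sum in its $n$-form,
\[
\sum_n A_n^{(\vartheta,*)}\Bigl(\sum_{i<n}\alpha_i\Bigr)\Bigl(\sum_{j<n}\alpha_j|b_j|\Bigr).
\]
Passing to ungauged variables with $\mu_\vartheta=\lambda_\vartheta>\lambda_0$, as in the proof of Lemma~\ref{lem:dtheta-m-tail}, I would trade the extra polynomial factors for the exponential gap $A_n^{(\vartheta)}(\mu_\vartheta)\le A_n^{(\vartheta)}(\lambda_0)\,e^{-c\sum_{i<n}\alpha_i}$. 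This is the Appendix~\ref{app:analytic-regularity} estimate. The remaining factor is then bounded by the (II.2) envelope \eqref{eq:uniform-dominating}.

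Checking that this gap estimate really absorbs the second factor $\sum_{i<n}\alpha_i$ is the step I expect to be delicate. The crude bound $r_{\max\{i,j\}}\le r_j$ only yields $\sum_j(j+1)\alpha_j|b_j|r_j$, which (II.2) does not directly control. Once the envelope is in place, dominated convergence gives both continuity of $Q_\theta$ and the termwise differentiation used above.
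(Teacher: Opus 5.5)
Your proposal is correct and is essentially the paper's argument. The only difference is the order of the mixed derivative: the paper differentiates the $\theta$-derivative formula \eqref{eq:dtheta-m-tail} in $\lambda$ using \eqref{eq:dlam-ri}, whereas you differentiate the tail form $\sum_i\alpha_i r_i$ in $\theta$, which is the literal order and avoids the paper's tacit swap of partials. Both routes reach the identical expression $\sum_i b_i\alpha_i^2 r_i+\sum_{i,j}b_i\alpha_i\alpha_j r_{\max\{i,j\}}$ before centering and symmetrising. Your Step~3 envelope is exactly the Appendix~\ref{app:analytic-regularity} bound $A_n^{(\theta)}(\mu)L_n(\mu)(1+S_n(\mu))\le C_\delta A_n^{(\theta)}(\lambda_0)L_n(\lambda_0)$, where boundedness of $x\mapsto xe^{-\delta x}$ absorbs the extra factor $\sum_{i<n}\alpha_i$ you flagged as delicate.
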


\begin{proof}
The domination proved in Appendix~\ref{app:analytic-regularity} first gives continuity of $\partial_\lambda m_\theta^*(1)$ in $\theta$, hence of $Q_\theta=-\partial_\lambda m_\theta^*(1)$. For $\theta\in(0,1)$ it also justifies the mixed derivative used below.
Along the gauged family, $m_{f_\theta^*}(1)=1$ for all $\theta$ by \eqref{eq:malthus-fixed}.
Thus $Q_\theta=-\partial_\lambda m_\theta^*(1)$ and
\[
Q_\theta'=-\partial_{\theta\lambda} m_\theta^*(1).
\]
Using \eqref{eq:dtheta-m-tail} and differentiating in $\lambda$ at $\lambda=1$ gives
\[
\partial_{\theta\lambda}m_\theta^*(1)
=
\sum_{i\ge0} b_i^{(\theta)}\,
\partial_\lambda\Big(\frac{\lambda}{f_\theta^*(i)+\lambda}\,r_i^{(\theta,*)}(\lambda)\Big)\Big|_{\lambda=1}.
\]

Expanding the $\lambda$-derivative gives
\[
\partial_{\theta\lambda} m_\theta^*(\lambda)
=\sum_{i\ge0} b_i^{(\theta)}\,\partial_\lambda\Big(\frac{\lambda}{f_\theta^*(i)+\lambda}\,r_i^{(\theta,*)}(\lambda)\Big),
\qquad
r_i^{(\theta,*)}(\lambda):=\sum_{n\ge i+1}A_n^{(\theta,*)}(\lambda),
\]
where $A_n^{(\theta,*)}(\lambda)=\prod_{j=0}^{n-1}\frac{f_\theta^*(j)}{f_\theta^*(j)+\lambda}$.
Writing $\alpha_i(\lambda):=(f_\theta^*(i)+\lambda)^{-1}$, we have
\[
\partial_\lambda\Big(\lambda \alpha_i(\lambda) r_i(\lambda)\Big)
=\big(\alpha_i(\lambda)-\lambda\alpha_i(\lambda)^2\big)\,r_i(\lambda)
+\lambda\alpha_i(\lambda)\,\partial_\lambda r_i(\lambda).
\]
Moreover, differentiating termwise and using
\(
\partial_\lambda A_n(\lambda)=-A_n(\lambda)\sum_{j=0}^{n-1}\alpha_j(\lambda)
\)
yields the useful identity
\begin{equation}\label{eq:dlam-ri}
\partial_\lambda r_i(\lambda)
=-\sum_{j\ge0}\alpha_j(\lambda)\,r_{\max\{i,j\}}(\lambda),
\end{equation}
Indeed,
\begin{align*}
\partial_\lambda r_i(\lambda)
&=-\sum_{n\ge i+1}A_n(\lambda)\sum_{j=0}^{n-1}\alpha_j(\lambda)\\
&=-\sum_{j\ge 0}\alpha_j(\lambda)
  \sum_{n\ge \max\{i+1,j+1\}}A_n(\lambda)\\
&=-\sum_{j\ge 0}\alpha_j(\lambda)\,r_{\max\{i,j\}}(\lambda),
\end{align*}
which is \eqref{eq:dlam-ri}. Evaluating at $\lambda=1$ and writing
\(
\alpha_i:=\alpha_i(1)=(f_\theta^*(i)+1)^{-1}
\)
and
\(
r_i:=r_i^{(\theta,*)}(1),
\)
we obtain
\[
Q_\theta'=-\partial_{\theta\lambda}m_\theta^*(1)
=-\sum_{i\ge 0} b_i^{(\theta)}\Big((\alpha_i-\alpha_i^2)\,r_i+\alpha_i\,\partial_\lambda r_i(\lambda)\big|_{\lambda=1}\Big).
\]
The centering identity \eqref{eq:centering} (equivalently $\sum_{i\ge 0} b_i^{(\theta)}\alpha_i r_i=0$) cancels the linear term
$-\sum_i b_i^{(\theta)}\alpha_i r_i$, and \eqref{eq:dlam-ri} yields
\[
Q_\theta'
=\sum_{i\ge 0} b_i^{(\theta)}\alpha_i^2 r_i
+\sum_{i,j\ge 0} b_i^{(\theta)}\alpha_i\alpha_j\,r_{\max\{i,j\}}.
\]
Splitting the last double sum into the regions $i<j$, $i=j$, and $i>j$, and relabeling $(i,j)\mapsto (j,i)$ in the region $i>j$, we obtain
\[
Q_\theta'=\sum_{0\le i\le j}\big(b_i^{(\theta)}+b_j^{(\theta)}\big)\,\alpha_i\alpha_j\,r_j,
\]
which is \eqref{eq:Qprime-double}.
\end{proof}

\subsection[Conclusion: Qtheta derivative nonnegative]{Conclusion: \texorpdfstring{$Q_{\theta}'\ge 0$}{Qtheta derivative nonnegative}}
\label{subsec:Qprime-nonneg}

We now complete the proof that $Q_\theta'\ge 0$ along the interpolation, using the representation from
Lemma~\ref{lem:Qprime} and the gauge centering $\sum_{k\ge 0} b_k^{(\theta)} w_k^{(\theta)}=0$.

Throughout this subsection we fix $\theta$ and suppress the superscripts $(\theta)$ and $(\theta,*)$.
Recall the notation
\[
\alpha_k:=\frac{1}{f_\theta^*(k)+1},
\qquad
r_k:=r_k^{(\theta,*)}(1),
\qquad
w_k:=\alpha_k r_k,
\qquad
b_k:=\log h(k)+a'(\theta).
\]
We also define $N_\theta$ to be a generic random variable with $\PP(N_\theta=n)=A_n^{(\theta,*)}$, so that $r_k=\PP(N_\theta>k)$ and $w_k=\PP(N_\theta>k)/(f_\theta^*(k)+1)$.
It holds that $k\mapsto b_k$ is nondecreasing and the gauge implies
\begin{equation}
\label{eq:center-bw}
\sum_{k\ge 0} b_k w_k=0.
\end{equation}

\noindent\textit{The corrected regression profile.}
Define, for $k\ge 0$,
\begin{equation*}
C_k
:=\frac{1}{r_k}\sum_{j\ge 0}\alpha_j\, r_{\max\{k,j\}}.
\end{equation*}
The profile that occurs after collecting the coefficients of $b_k$ is
\begin{equation*}
\Gamma_k:=C_k+\alpha_k.
\end{equation*}
Equivalently, writing
\[
W_k:=\sum_{j\ge k}\alpha_j r_j,
\qquad
s_k:=\sum_{j=0}^{k-1}\alpha_j\quad(s_0:=0),
\]
we have the useful identity
\begin{equation}
\label{eq:Ck-alt}
C_k=s_k+\frac{W_k}{r_k}.
\end{equation}

\begin{lemma}
\label{lem:Ck-monotone}
The sequence $k\mapsto C_k$ is nondecreasing.
\end{lemma}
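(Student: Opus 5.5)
The plan is to prove the alternative representation \eqref{eq:Ck-alt} first and then compute the increment $C_{k+1}-C_k$ in closed form; its sign will turn out to be determined by the monotonicity of the tails $r_k$. Throughout, $\theta$ is fixed and the superscripts are suppressed as in the surrounding text.

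\emph{Step 1: checking \eqref{eq:Ck-alt}.} Split the defining sum of $C_k$ at $j=k$. For $j<k$ we have $r_{\max\{k,j\}}=r_k$, so this part contributes $r_k\sum_{j<k}\alpha_j=r_k s_k$. For $j\ge k$ we have $r_{\max\{k,j\}}=r_j$, so this part contributes $\sum_{j\ge k}\alpha_j r_j=W_k$. Dividing by $r_k$ gives $C_k=s_k+W_k/r_k$.

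Two facts make this legitimate. First, every $A_n^{(\theta,*)}>0$, since $f_\theta^*>0$, and hence $r_k>0$ for all $k$. Second, $W_k\le W_0=\sum_i w_i=Q_\theta<\infty$ by the tail representation \eqref{eq:mfprime-tail} and (II.2). So all quantities are finite and the rearrangement of the nonnegative series is harmless.

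\emph{Step 2: the increment.} Use $s_{k+1}=s_k+\alpha_k$ and the recursion $W_k=\alpha_k r_k+W_{k+1}$. Then
\[
C_{k+1}-C_k
=\alpha_k+\frac{W_{k+1}}{r_{k+1}}-\frac{\alpha_k r_k+W_{k+1}}{r_k}
=W_{k+1}\Bigl(\frac{1}{r_{k+1}}-\frac{1}{r_k}\Bigr).
\]
The $\alpha_k$ terms cancel exactly; this cancellation is the only mildly nontrivial point of the argument.

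\emph{Step 3: the sign.} We have $r_k-r_{k+1}=A_{k+1}^{(\theta,*)}>0$, so $0<r_{k+1}<r_k$ and the bracket in Step 2 is positive. Since also $W_{k+1}\ge0$, it follows that $C_{k+1}\ge C_k$, and in fact the inequality is strict because $W_{k+1}>0$.

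I do not expect a real obstacle here. The only care needed is the bookkeeping in Step 1 and the finiteness and positivity of $r_k$ and $W_k$, both of which are supplied by the standing assumptions.
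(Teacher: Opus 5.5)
Your proof is correct and follows the paper's argument: the same splitting $C_k=s_k+W_k/r_k$ and the same recursion $W_k=\alpha_k r_k+W_{k+1}$. The only difference is the grouping of the increment. You write it as $W_{k+1}\bigl(1/r_{k+1}-1/r_k\bigr)$, so the sign is immediate. The paper instead writes $\frac{r_k-r_{k+1}}{r_{k+1}}\bigl(\frac{W_k}{r_k}-\alpha_k\bigr)$ and proves $W_k/r_k\ge\alpha_k$ by a conditional-expectation argument; that inequality is just $W_{k+1}\ge 0$ in disguise, and the paper records your form right after the proof.
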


\begin{proof}
From \eqref{eq:Ck-alt} and $W_{k+1}=W_k-\alpha_k r_k$ we compute
\[
C_{k+1}-C_k
=\alpha_k+\frac{W_{k+1}}{r_{k+1}}-\frac{W_k}{r_k}
=\alpha_k+\frac{W_k-\alpha_k r_k}{r_{k+1}}-\frac{W_k}{r_k}
=\frac{r_k-r_{k+1}}{r_{k+1}}\Big(\frac{W_k}{r_k}-\alpha_k\Big).
\]
Since $r_k-r_{k+1}=\PP(N_\theta=k+1)\ge 0$, it remains to show $W_k/r_k\ge \alpha_k$.
But
\[
\frac{W_k}{r_k}
=\frac{1}{\PP(N_\theta>k)}\sum_{j\ge k}\alpha_j\,\PP(N_\theta>j)
=\E\left[\sum_{j=k}^{N_\theta-1}\alpha_j \,\middle|\, N_\theta>k\right]
\ge \E\left[\alpha_k\,\middle|\, N_\theta>k\right]
=\alpha_k,
\]
and therefore $C_{k+1}\ge C_k$.
\end{proof}

The preceding lemma is useful but not sufficient by itself: since $f_\theta^\ast$ is nondecreasing, $\alpha_k$ is nonincreasing, so $\Gamma_k=C_k+\alpha_k$ need not be monotone. From the proof of Lemma~\ref{lem:Ck-monotone} one also obtains
\[
C_{k+1}-C_k
=
\frac{(r_k-r_{k+1})W_{k+1}}{r_kr_{k+1}},
\]
and hence the one-step condition
\begin{equation}\label{eq:depth-onestep-sufficient}
\frac{(r_k-r_{k+1})W_{k+1}}{r_kr_{k+1}}
\ge
\alpha_k-\alpha_{k+1},
\qquad k\ge0,
\end{equation}
is a convenient sufficient condition for pointwise monotonicity of $\Gamma_k$.

Starting from the double-sum representation in Lemma~\ref{lem:Qprime} we rewrite
\begin{align}
Q_\theta'
&=\sum_{0\le i\le j}(b_i+b_j)\,\alpha_i\alpha_j\,r_j \notag\\
&=\sum_{k\ge 0} b_k\,\alpha_k
\Big(\alpha_k r_k+\sum_{j\ge 0}\alpha_j\,r_{\max\{k,j\}}\Big) \notag\\
&=\sum_{k\ge 0} b_k\,(\alpha_k r_k)\,(C_k+\alpha_k)
=\sum_{k\ge 0} b_k\,w_k\,\Gamma_k.
\label{eq:Qprime-Gamma}
\end{align}
The additional term $\sum_k b_k\alpha_k^2r_k$ is the diagonal contribution in \eqref{eq:Qprime-double}.

Let
\begin{equation}\label{eq:Gamma-bar-depth}
\overline\Gamma:=
\frac{\sum_{k\ge0}w_k\Gamma_k}{\sum_{k\ge0}w_k},
\qquad
S_\ell:=
\sum_{k\ge\ell}w_k(\Gamma_k-\overline\Gamma),
\qquad \ell\ge1.
\end{equation}
By the centering identity \eqref{eq:center-bw},
\[
Q_\theta'
=
\sum_{k\ge0}b_kw_k(\Gamma_k-\overline\Gamma).
\]
The identity may be read first with the sums truncated at $k\le M$; the domination in (II.2) and the finiteness required in Assumption~D allow the limit $M\to\infty$.
Writing $\Delta b_\ell:=b_\ell-b_{\ell-1}=\log h(\ell)-\log h(\ell-1)$ and using
$b_k=b_0+\sum_{\ell=1}^k\Delta b_\ell$, Abel summation gives the corrected core identity
\begin{equation}\label{eq:Qprime-Abel-depth}
Q_\theta'
=
\sum_{\ell\ge1}
\big(\log h(\ell)-\log h(\ell-1)\big)S_\ell.
\end{equation}

We also record the weighted Chebyshev inequality used below in the height proof and in sufficient-condition checks.

\begin{lemma}\label{lem:weighted-chebyshev-eventual}
Let $(w_k)_{k\ge0}$ be nonnegative weights with $0<W:=\sum_{k\ge0}w_k<\infty$.
Let $(a_k)_{k\ge0}$ and $(c_k)_{k\ge0}$ be nondecreasing real sequences such that the sums below are finite.
If
\[
\sum_{k\ge0}a_k w_k=0,
\]
then
\[
\sum_{k\ge0}a_k c_k w_k\ge0.
\]
\end{lemma}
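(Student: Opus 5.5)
The plan is to reduce the claim to the nonnegativity of a double sum. Since $\sum_k a_k w_k=0$, for any constant $\bar c$ we have
\[
\sum_k a_kc_kw_k=\sum_k a_k(c_k-\bar c)w_k .
\]
Choose $\bar c$ to be a ``median-type'' threshold value of $c$ at the sign change of $a$. Concretely, because $a$ is nondecreasing, there is an index $K$ with $a_k<0$ for $k<K$ and $a_k\ge0$ for $k\ge K$. Take $\bar c:=c_K$ (if $a_k<0$ for all $k$, then centering forces all $w_k$ with $a_k\neq 0$ to vanish, and the claim is trivial). Then for $k<K$ both $a_k<0$ and $c_k-c_K\le0$, so each term is $\ge0$. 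For $k\ge K$ both $a_k\ge0$ and $c_k-c_K\ge0$, so again each term is $\ge0$. Hence every summand $a_k(c_k-c_K)w_k$ is nonnegative and the sum is $\ge0$.

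The only delicate point is justifying the split $\sum_k a_kc_kw_k=\sum_k a_k(c_k-c_K)w_k$. This requires $\sum_k a_kw_k$ to converge absolutely, so that the constant term $c_K\sum_k a_kw_k=0$ can be subtracted. I will read ``the sums below are finite'' as absolute convergence of $\sum_k a_kw_k$ and $\sum_k a_kc_kw_k$, and then the rearrangement is legitimate.

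Alternatively, there is a symmetric double-sum route. Write
\[
\sum_{j,k}(a_j-a_k)(c_j-c_k)w_jw_k=2W\sum_k a_kc_kw_k-2\Big(\sum_k a_kw_k\Big)\Big(\sum_k c_kw_k\Big),
\]
whose left-hand side is $\ge0$ by comaximality of monotone sequences. This route needs $\sum_k c_kw_k$ to be finite as well, so the threshold argument above is preferable because it avoids that extra integrability.

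I do not expect any real obstacle. The main care is the edge cases: the sign change of $a$ may occur at $K=0$, or $a$ may be strictly negative everywhere. In the latter case the centering $\sum_k a_kw_k=0$ with $w\ge0$ forces $w_k=0$ wherever $a_k\neq0$, so the sum vanishes.
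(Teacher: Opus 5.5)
Your argument is correct, and it takes a different route from the paper. The paper normalises $\nu_k=w_k/W$ and uses the Chebyshev covariance identity
\[
\sum_k a_kc_k\nu_k-\Big(\sum_k a_k\nu_k\Big)\Big(\sum_k c_k\nu_k\Big)=\frac12\sum_{i,j\ge0}(a_i-a_j)(c_i-c_j)\nu_i\nu_j\ge0 .
\]
This is exactly your ``alternative'' route, and the paper invokes the centering only at the end to remove the product term. You use the centering at the outset: subtracting $c_K\sum_k a_kw_k=0$ at the sign-change index $K$ of $a$ makes every summand $a_k(c_k-c_K)w_k$ nonnegative. In effect this is the single-crossing case of the dual-cone Lemma~\ref{lem:increasing-dual-cone} applied to $\mu_k=a_kw_k$. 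The paper's route buys the uncentred inequality $\E_\nu[ac]\ge\E_\nu[a]\,\E_\nu[c]$, with $a$ and $c$ treated symmetrically. Yours uses only the two series named in the statement. The covariance identity tacitly needs $\sum_kc_kw_k<\infty$, which the hypothesis does not literally give, although it follows from monotonicity except in the degenerate case $a\le0$. Yours also requires of $a$ only one sign change from negative to nonnegative, not full monotonicity.

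Two small corrections. First, the split is just termwise linearity of two convergent series, so no absolute convergence or rearrangement is involved; absolute convergence is automatic anyway, since monotone sequences are eventually of one sign. Second, the case $a_k<0$ for all $k$ cannot occur, because centering would force $w\equiv0$, contradicting $W>0$. Also, the property you call ``comaximality'' is usually called comonotonicity.
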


\begin{proof}
Normalize the weights by $\nu_k=w_k/W$. Since $a$ and $c$ are nondecreasing,
\[
\sum_{k\ge0}a_kc_k\nu_k
-\Big(\sum_{k\ge0}a_k\nu_k\Big)\Big(\sum_{k\ge0}c_k\nu_k\Big)
=\frac12\sum_{i,j\ge0}(a_i-a_j)(c_i-c_j)\nu_i\nu_j\ge0.
\]
The centering assumption gives the claim.
\end{proof}

\noindent\textit{The sign of $Q_\theta'$.}
The Abel identity \eqref{eq:Qprime-Abel-depth} is the concrete form of the dual pairing from Lemma~\ref{lem:increasing-dual-cone}.  Under GRD the tangent $\log h$ is nondecreasing, while Assumption~D says that the signed measure $w_k(\Gamma_k-\overline\Gamma)$ has nonnegative upper tails.  Hence $Q_\theta'\ge0$.

\begin{proof}[Proof of Theorem~\ref{thm:monotonicity}]
By Lemma~\ref{lem:Qprime}, the corrected profile representation \eqref{eq:Qprime-Gamma}, and Assumption~D through the dual-cone identity \eqref{eq:Qprime-Abel-depth}, we have $Q_\theta'\ge 0$ for every $\theta\in(0,1)$.
Lemma~\ref{lem:Qprime} and the mean-value theorem therefore show that $\theta\mapsto Q_\theta$ is nondecreasing along \eqref{eq:f-theta}, so $Q_1\ge Q_0$, i.e.\ $Q_f\ge Q_g$.
The endpoint identifications $\mathsf{c}_f=1/Q_f$ and $\mathsf{c}_g=1/Q_g$ from Assumption~I (I.5) then give $\mathsf{c}_f\le \mathsf{c}_g$, which is the claimed monotonicity of the depth constant.
\end{proof}

\section[Proof of height monotonicity]{Proof of Theorem~\ref{thm:height-monotonicity}}
\label{sec:height-proof}

This section proves Theorem~\ref{thm:height-monotonicity} via the multiplicative interpolation from the theorem statement.
The CMJ/BRW embedding and the identification of the height constant with $R_f:=\lambda_f\kappa_f$ are recalled in Section~\ref{sec:cmj-parameters}.
We now focus on the monotonicity of $R_s$ along the GRD interpolation.

\subsection{Interpolation identity for \texorpdfstring{$R_s$}{Rs}}

Fix attachment functions $g,f$ with $f\succeq_{\mathrm{GR}} g$, and write $h:=f/g$, so that $k\mapsto h(k)$ is nondecreasing.
As in the statement of Theorem~\ref{thm:height-monotonicity}, we interpolate
\begin{equation}\label{eq:height-interpolation}
	f_s(k)\;:=\;g(k)\,h(k)^s,\qquad s\in[0,1],
\end{equation}
Set
\begin{equation*}
	b_k=b_k^{(s)}\;:=\;\partial_s\log f_s(k)\;=\;\log h(k),
\end{equation*}
so $k\mapsto b_k$ is nondecreasing for each $s$.

Let $\lambda_s$ be the Malthusian parameter of the CMJ process associated with $f_s$, and let $\kappa_s$ be the BRW speed from Lemma~\ref{lem:height-variational}.
Multiplying an attachment function by a scalar does not change the \emph{discrete} preferential-attachment law, while it rescales $\lambda_s$ and $\kappa_s$ inversely. The product
\begin{equation*}
	R_s\;:=\;\lambda_s\,\kappa_s
\end{equation*}
is therefore invariant under scalar rescaling.
In particular, the height constant can be written as
\begin{equation*}
	\mathsf{c}^{\ast}_{f_s}=\frac{1}{R_s}.
\end{equation*}
Thus Theorem~\ref{thm:height-monotonicity} reduces to proving that $s\mapsto R_s$ is nondecreasing.

Fix $s$ and $\lambda\ge\lambda_s$. Define
\begin{equation}\label{eq:An-ms-height}
	A_n^{(s)}(\lambda)\;:=\;\prod_{j=0}^{n-1}\frac{f_s(j)}{f_s(j)+\lambda},
	\qquad
	m_s(\lambda)\;:=\;\sum_{n\ge1}A_n^{(s)}(\lambda).
\end{equation}
Let $N$ be the associated ``root law'' under $\PP_{s,\lambda}$,
\begin{equation}\label{eq:rootlaw-height}
	\PP_{s,\lambda}(N=n)\;:=\;\frac{A_n^{(s)}(\lambda)}{m_s(\lambda)},\qquad n\ge1,
\end{equation}
with tail indicators $X_k:=\mathbf 1_{\{N>k\}}$ and tails $T_k:=\PP_{s,\lambda}(N>k)$.

Introduce the basic factors
\begin{equation*}
	c_k=c_k^{(s)}(\lambda)\;:=\;\frac{1}{f_s(k)+\lambda},
	\qquad
	a_k=a_k^{(s)}(\lambda)\;:=\;\frac{\lambda}{f_s(k)+\lambda}=\lambda\,c_k,
\end{equation*}
and the increasing functional
\begin{equation*}
	S\;:=\;\sum_{j=0}^{N-1}c_j.
\end{equation*}
Finally define the $s$--score random variable
\begin{equation}\label{eq:U-def-height}
	U\;:=\;\partial_s\log A_N^{(s)}(\lambda)
	\;=\;\sum_{k\ge0} b_k\,a_k\,X_k.
\end{equation}

\begin{lemma}\label{lem:score-identities-height}
	For each fixed $s$ and $\lambda\ge\lambda_s$ at which the displayed derivatives are finite,
	\begin{align}
		\partial_s\log m_s(\lambda) &= \E_{s,\lambda}[U], \label{eq:dslogm-height}\\
		\partial_\lambda\log m_s(\lambda) &= -\,\E_{s,\lambda}[S]. \label{eq:dlambda-logm-height}
	\end{align}
	Moreover, for any fixed integrable function $Y=Y(N)$ whose values do not depend on $s$,
	\begin{equation}\label{eq:score-rule-height}
		\partial_s \E_{s,\lambda}[Y]\;=\;\Cov_{s,\lambda}(Y,U).
	\end{equation}
\end{lemma}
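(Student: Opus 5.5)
The plan is to compute each identity by direct differentiation of the series \eqref{eq:An-ms-height}, treating $\PP_{s,\lambda}$ as an exponential-tilt family indexed by $s$, and to justify every interchange of derivative and sum by the domination (II.2).

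\textbf{Step 1: the $s$-derivative.} As in \eqref{eq:dtheta-A}, but now without a gauge term, each factor satisfies
\[
\partial_s\log\frac{f_s(j)}{f_s(j)+\lambda}=\frac{\lambda}{f_s(j)+\lambda}\,b_j=a_jb_j .
\]
Hence
\[
\partial_s\log A_n^{(s)}(\lambda)=\sum_{j<n}a_jb_j=\sum_{k\ge0}b_ka_k\mathbf 1_{\{n>k\}},
\]
which at $n=N$ is exactly $U$ as defined in \eqref{eq:U-def-height}.

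\textbf{Step 2: identity \eqref{eq:dslogm-height}.} Differentiate $m_s=\sum_nA_n^{(s)}$ termwise, which is permitted under (II.2) as in Lemma~\ref{lem:dtheta-m-tail}. This gives $\partial_sm_s=\sum_nA_n\,\partial_s\log A_n$. Dividing by $m_s$ turns this into $\E_{s,\lambda}[U]$.

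\textbf{Step 3: identity \eqref{eq:dlambda-logm-height}.} Use
\[
\partial_\lambda\log A_n=-\sum_{j<n}\frac{1}{f_s(j)+\lambda}=-\sum_{j<n}c_j .
\]
Differentiating termwise and normalising by $m_s$ gives $-\E_{s,\lambda}[S]$. The interchange is justified by (II.2) via the tail identity of Lemma~\ref{lem:cmj-depth-parameters}.

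\textbf{Step 4: the score rule \eqref{eq:score-rule-height}.} Write
\[
\E_{s,\lambda}[Y]=\frac{\sum_n Y(n)A_n}{m_s}.
\]
The quotient rule together with Steps 1 and 2 gives
\[
\frac{\sum_nY(n)A_n\,\partial_s\log A_n}{m_s}-\E[Y]\,\E[U]=\E[YU]-\E[Y]\E[U]=\Cov(Y,U).
\]
This step needs $\sum_n|Y(n)|A_n\sum_{j<n}a_j|b_j|$ to be uniformly summable for $s$ in a neighbourhood, so that one may differentiate under the sum.

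\textbf{Main obstacle.} The only delicate point is this domination. For bounded $Y$ (such as the indicators $X_k$, which are what the sequel uses) it follows directly from (II.2), since $\lambda\ge\lambda_s>\lambda_0$ and the $A_n$ are monotone in $\lambda$. For general integrable $Y$, I would read the statement as holding wherever such an envelope exists, in keeping with the lemma's phrase ``at which the displayed derivatives are finite''.
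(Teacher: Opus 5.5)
Your proposal is correct and is essentially the paper's proof. Both differentiate $m_s=\sum_n A_n^{(s)}$ termwise, using $\partial_s\log A_n^{(s)}=\sum_{j<n}a_jb_j$ (which is $U$ at $n=N$) and $\partial_\lambda\log A_n^{(s)}=-\sum_{j<n}c_j$, and then apply the quotient rule to $\sum_n Y(n)A_n^{(s)}/m_s$; this quotient-rule step is exactly the ``standard likelihood identity'' the paper invokes.

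The only difference is that you make the required domination explicit. You also restrict the score rule to $Y$ admitting an envelope, e.g.\ bounded $Y$ such as the indicators $X_k$ used later. The paper instead simply defers all interchanges to the analytic assumptions of the depth proof.
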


\begin{proof}
	Differentiate term-by-term in \eqref{eq:An-ms-height} (justified by the analytic assumptions already used in the depth proof) to obtain
	$\partial_s A_n^{(s)}(\lambda)=A_n^{(s)}(\lambda)\,\partial_s\log A_n^{(s)}(\lambda)$ and hence
	$\partial_s m_s(\lambda)=m_s(\lambda)\E_{s,\lambda}[\partial_s\log A_N^{(s)}(\lambda)]=m_s(\lambda)\E_{s,\lambda}[U]$,
	which gives \eqref{eq:dslogm-height}.
	Similarly, $\partial_\lambda\log A_n^{(s)}(\lambda)=-\sum_{j=0}^{n-1}c_j$, yielding \eqref{eq:dlambda-logm-height}.
	Finally, for fixed $Y=Y(N)$, \eqref{eq:score-rule-height} is the standard likelihood identity under the tilted family \eqref{eq:rootlaw-height}.
\end{proof}

Define now the weights
\begin{equation*}
	w_k=w_k^{(s)}(\lambda)\;:=\;a_k\,T_k,
\end{equation*}
and the conditional shift
\begin{equation}\label{eq:Mk-def-height}
	M_k=M_k^{(s)}(\lambda)\;:=\;\E_{s,\lambda}[S\mid N>k]-\E_{s,\lambda}[S].
\end{equation}
Then expanding $\Cov(S,U)$ using \eqref{eq:U-def-height} gives the tail form
\begin{equation*}
	\Cov_{s,\lambda}(S,U)
	\;=\;
	\sum_{k\ge0} b_k\,w_k\,{M}_k.
\end{equation*}

\medskip
\noindent
\textit{Optimiser and the corrected shift.}
Let $\lambda^{\ast}_s\in(\lambda_s,\infty)$ denote an interior maximiser of the variational problem
\begin{equation}\label{eq:lamstar-def-height}
	\kappa_s=\sup_{\lambda>\lambda_s}\frac{-\log m_s(\lambda)}{\lambda}
	\;=\;\frac{-\log m_s(\lambda^{\ast}_s)}{\lambda^{\ast}_s}.
\end{equation}
By Lemma~\ref{lem:lamstar-characterisation}, at $\lambda=\lambda^{\ast}_s$ we also have
\begin{equation}\label{eq:kappa-as-ES-height}
	\kappa_s
	\;=\;
	-\partial_\lambda\log m_s(\lambda^{\ast}_s)
	\;=\;
	\E_{s,\lambda^{\ast}_s}[S].
\end{equation}
Define the local correction
\begin{equation*}
	d_k=d_k^{(s)}(\lambda)\;:=\;\frac{f_s(k)}{\lambda(f_s(k)+\lambda)}
	\;=\;\frac{1}{\lambda}-c_k,
\end{equation*}
and the corrected sequence
\begin{equation*}
	\widetilde M_k=\widetilde M_k^{(s)}(\lambda)
	\;:=\;
	M_k-d_k.
\end{equation*}

\begin{lemma}\label{lem:gamma-derivative-height}
	Assume that $s$ is a point at which $\lambda_s$ and $\kappa_s$ are differentiable, that the envelope identity in (II.3) holds, and that the analytic interchanges used in Lemma~\ref{lem:score-identities-height} apply at $\lambda=\lambda_s$ and $\lambda=\lambda^{\ast}_s$.
	Then
	\begin{equation}\label{eq:gamma-derivative-twopoint}
		R_s'
		=
		\kappa_s\,\frac{\E_{s,\lambda_s}[U]}{\E_{s,\lambda_s}[S]}
		\;-\;
		\frac{\lambda_s}{\lambda^{\ast}_s}\,\E_{s,\lambda^{\ast}_s}[U].
	\end{equation}
	Moreover, writing the $\lambda$--weighted mean
	\begin{equation}\label{eq:bbar-def-height}
		\bar b_s(\lambda)
		:=
		\frac{\sum_{k\ge0} b_k^{(s)}\,w_k^{(s)}(\lambda)}{\sum_{k\ge0} w_k^{(s)}(\lambda)},
		\qquad \lambda\ge\lambda_s,
	\end{equation}
	we have the weighted-mean form
	\begin{equation}\label{eq:gamma-derivative-bbar-height}
		R_s'
		=
		R_s\big(\bar b_s(\lambda_s)-\bar b_s(\lambda^{\ast}_s)\big).
	\end{equation}
\end{lemma}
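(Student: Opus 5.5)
The plan is to differentiate the product $R_s=\lambda_s\kappa_s$ directly, $R_s'=\lambda_s'\kappa_s+\lambda_s\kappa_s'$, and to compute each factor's derivative from the score identities of Lemma~\ref{lem:score-identities-height}.

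\emph{Step 1: the Malthusian factor.} By Lemma~\ref{lem:lambda-regularity} (valid under (II.1)--(II.2)), $s\mapsto\lambda_s$ is $C^1$. The implicit function theorem applied to $\log m_s(\lambda_s)=0$ gives
\[
\lambda_s'=-\frac{\partial_s\log m_s(\lambda_s)}{\partial_\lambda\log m_s(\lambda_s)}.
\]
By \eqref{eq:dslogm-height} the numerator equals $\E_{s,\lambda_s}[U]$, and by \eqref{eq:dlambda-logm-height} the denominator equals $-\E_{s,\lambda_s}[S]$. Hence
\[
\lambda_s'=\frac{\E_{s,\lambda_s}[U]}{\E_{s,\lambda_s}[S]},
\]
and $\lambda_s'\kappa_s$ is exactly the first term of \eqref{eq:gamma-derivative-twopoint}.

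\emph{Step 2: the speed factor.} The envelope identity in (II.3) gives
\[
\kappa_s'=-\frac{1}{\lambda^\ast_s}\,\partial_s\log m_s(\lambda^\ast_s).
\]
Applying \eqref{eq:dslogm-height} at $\lambda=\lambda^\ast_s$ turns this into $\kappa_s'=-\E_{s,\lambda^\ast_s}[U]/\lambda^\ast_s$. Multiplying by $\lambda_s$ yields the second term of \eqref{eq:gamma-derivative-twopoint}. Adding Steps~1 and~2 proves \eqref{eq:gamma-derivative-twopoint}.

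\emph{Step 3: the weighted-mean form.} I would express both expectations through the weights $w_k=a_kT_k$. Taking expectations in \eqref{eq:U-def-height} termwise gives
\[
\E_{s,\lambda}[U]=\sum_k b_k a_k T_k=\sum_k b_k w_k=W_s(\lambda)\,\bar b_s(\lambda).
\]
Writing $S=\sum_k c_kX_k$ gives
\[
\E_{s,\lambda}[S]=\sum_k c_kT_k=\frac{1}{\lambda}\sum_k w_k=\frac{W_s(\lambda)}{\lambda}.
\]
Hence $\E_{s,\lambda}[U]/\E_{s,\lambda}[S]=\lambda\,\bar b_s(\lambda)$, so the first term equals $\kappa_s\lambda_s\bar b_s(\lambda_s)=R_s\bar b_s(\lambda_s)$. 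For the second term, \eqref{eq:kappa-as-ES-height} gives $W_s(\lambda^\ast_s)/\lambda^\ast_s=\E_{s,\lambda^\ast_s}[S]=\kappa_s$. Therefore
\[
\frac{\lambda_s}{\lambda^\ast_s}\,\E_{s,\lambda^\ast_s}[U]=\lambda_s\,\frac{W_s(\lambda^\ast_s)}{\lambda^\ast_s}\,\bar b_s(\lambda^\ast_s)=R_s\,\bar b_s(\lambda^\ast_s).
\]
Combining the two terms gives \eqref{eq:gamma-derivative-bbar-height}.

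The main obstacle is the analytic justification rather than the algebra. Three points need care:
\begin{itemize}
\item the interchange of $\E$ with the infinite sums in $U$ and $S$;
\item the finiteness of $\sum_k|b_k|w_k$ at both $\lambda_s$ and $\lambda^\ast_s$;
\item the validity of the implicit differentiation at $\lambda_s$.
\end{itemize}
For the first two, I would invoke the domination (II.2): since $\lambda^\ast_s>\lambda_s>\lambda_0$, the tails $T_k$ are dominated by $r_k^{(s)}(\lambda_0)$ up to constants, and the factor $(1+|\log h|)$ in \eqref{eq:uniform-dominating} controls the $b_k$. The third point I take from Lemma~\ref{lem:lambda-regularity} and (II.3), as the statement already presupposes.
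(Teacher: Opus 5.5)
Your proposal is correct and follows the paper's proof essentially step for step. You implicitly differentiate $\log m_s(\lambda_s)=0$ with the score identities to get $\lambda_s'=\E_{s,\lambda_s}[U]/\E_{s,\lambda_s}[S]$, use the envelope identity (II.3) for $\kappa_s'$ and the product rule, and then rewrite $\E_{s,\lambda}[U]=\sum_k b_kw_k$ and $\E_{s,\lambda}[S]=W_s(\lambda)/\lambda$ together with $\E_{s,\lambda^\ast_s}[S]=\kappa_s$; your extra domination remark via $T_k=r_k^{(s)}(\lambda)/m_s(\lambda)\le r_k^{(s)}(\lambda_0)/m_s(\lambda)$ and (II.2) is a sound supplement to what the lemma already assumes.
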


\begin{proof}
	Fix $s\in(0,1)$.
	The Malthusian parameter is characterised by $m_s(\lambda_s)=1$, i.e.\ $\log m_s(\lambda_s)=0$.
	Differentiating in $s$ yields
	\[
	0
	=
	\partial_s\log m_s(\lambda_s)
	+
	\lambda_s'\,\partial_\lambda\log m_s(\lambda_s).
	\]
	By Lemma~\ref{lem:score-identities-height},
	$\partial_s\log m_s(\lambda)=\E_{s,\lambda}[U]$ and $\partial_\lambda\log m_s(\lambda)=-\E_{s,\lambda}[S]$,
	so
	\begin{equation*}
		\lambda_s'=\frac{\E_{s,\lambda_s}[U]}{\E_{s,\lambda_s}[S]}.
	\end{equation*}

	Let $J_s(\lambda):=-(\log m_s(\lambda))/\lambda$, $\lambda>\lambda_s$.
	By the envelope identity in Assumptions~II (II.3),
	\[
	\kappa_s'= \partial_s J_s(\lambda^{\ast}_s)
	=
	-\frac{1}{\lambda^{\ast}_s}\,\partial_s\log m_s(\lambda^{\ast}_s)
	=
	-\frac{1}{\lambda^{\ast}_s}\,\E_{s,\lambda^{\ast}_s}[U],
	\]
	using Lemma~\ref{lem:score-identities-height} once more.

	Combining the product rule $R_s'=\lambda_s'\kappa_s+\lambda_s\kappa_s'$ with the identities above yields
	\eqref{eq:gamma-derivative-twopoint}.

	Since $U=\sum_{k\ge0} b_k^{(s)}a_kX_k$ and $\E[X_k]=T_k$, we have
	\[
	\E_{s,\lambda}[U]=\sum_{k\ge0} b_k^{(s)}\,w_k^{(s)}(\lambda).
	\]
	Moreover $S=\sum_{k\ge0}c_kX_k$ implies $\E_{s,\lambda}[S]=\sum_{k\ge0}c_kT_k=(1/\lambda)\sum_{k\ge0}w_k^{(s)}(\lambda)$ because $w_k=\lambda c_kT_k$.
	Hence
	\[
	\frac{\E_{s,\lambda_s}[U]}{\E_{s,\lambda_s}[S]}
	=
	\lambda_s\,\bar b_s(\lambda_s).
	\]
	Finally, by \eqref{eq:kappa-as-ES-height} we have $\E_{s,\lambda^{\ast}_s}[S]=\kappa_s$, so
	\[
	\frac{\lambda_s}{\lambda^{\ast}_s}\,\E_{s,\lambda^{\ast}_s}[U]
	=
	\frac{\lambda_s}{\lambda^{\ast}_s}\,\bar b_s(\lambda^{\ast}_s)\sum_{k\ge0}w_k^{(s)}(\lambda^{\ast}_s)
	=
	\lambda_s\kappa_s\,\bar b_s(\lambda^{\ast}_s)
	=
	R_s\,\bar b_s(\lambda^{\ast}_s).
	\]
	Substituting these identities into \eqref{eq:gamma-derivative-twopoint} yields \eqref{eq:gamma-derivative-bbar-height}.
\end{proof}

By \eqref{eq:gamma-derivative-bbar-height}, the sign of $R_s'$ is governed by the comparison of the weighted means
$\bar b_s(\lambda_s)$ and $\bar b_s(\lambda^{\ast}_s)$. The following lemma expresses $\partial_\lambda \bar b_s(\lambda)$ in terms of the corrected profile $\widetilde M_k^{(s)}(\lambda)$.

\begin{lemma}\label{lem:bbar-derivative-height}
	Fix $s$ and write $W_s(\lambda):=\sum_{j\ge0} w_j^{(s)}(\lambda)$.
	Then $\lambda\mapsto \bar b_s(\lambda)$ is continuous on $[\lambda_s,\infty)$, differentiable on $(\lambda_s,\infty)$, and satisfies
	\begin{equation}\label{eq:bbar-derivative-height}
		\partial_\lambda \bar b_s(\lambda)
		=
		-\frac{1}{W_s(\lambda)}\sum_{k\ge0}\bigl(b_k^{(s)}-\bar b_s(\lambda)\bigr)\,w_k^{(s)}(\lambda)\,\widetilde M_k^{(s)}(\lambda).
	\end{equation}
	In particular, if $k\mapsto b_k^{(s)}$ and $k\mapsto \widetilde M_k^{(s)}(\lambda)$ are nondecreasing, then
	$\partial_\lambda \bar b_s(\lambda)\le 0$.

\end{lemma}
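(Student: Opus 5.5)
The plan is to prove the formula for $\partial_\lambda \bar b_s(\lambda)$ by first establishing the logarithmic-derivative identity $\partial_\lambda w_k^{(s)}(\lambda)=-w_k^{(s)}(\lambda)\widetilde M_k^{(s)}(\lambda)$ (the identity quoted in Remark~\ref{rem:common-profile-monotonicity}), then applying the quotient rule to $\bar b_s=\bigl(\sum_k b_kw_k\bigr)/W_s$, and finally invoking Lemma~\ref{lem:weighted-chebyshev-eventual} for the sign statement.

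\textbf{Step 1: the log-derivative of $w_k$.} Write $w_k=\lambda c_kT_k$ with $T_k=r_k(\lambda)/m_s(\lambda)$ and $r_k(\lambda)=\sum_{n\ge k+1}A_n^{(s)}(\lambda)$. Then
\[
\partial_\lambda\log w_k=\frac1\lambda-c_k+\partial_\lambda\log r_k(\lambda)-\partial_\lambda\log m_s(\lambda).
\]
Use $\partial_\lambda A_n=-A_n\sum_{j<n}c_j$, exactly as in the derivation of \eqref{eq:dlam-ri}. This gives
\[
\partial_\lambda\log r_k=-\frac{\sum_{n>k}A_n\sum_{j<n}c_j}{\sum_{n>k}A_n}=-\E_{s,\lambda}[S\mid N>k],
\]
since the normalisation by $m_s$ cancels in the conditional law. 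Together with \eqref{eq:dlambda-logm-height}, $\partial_\lambda\log m_s=-\E_{s,\lambda}[S]$, we obtain
\[
\partial_\lambda\log w_k=d_k-M_k=-\widetilde M_k.
\]

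\textbf{Step 2: the quotient rule.} Since $b_k=\log h(k)$ does not depend on $\lambda$,
\[
\partial_\lambda\bar b_s=\frac{\sum_k b_k\,\partial_\lambda w_k}{W_s}-\bar b_s\,\frac{\sum_k\partial_\lambda w_k}{W_s}
=-\frac1{W_s}\sum_k(b_k-\bar b_s)\,w_k\widetilde M_k,
\]
which is \eqref{eq:bbar-derivative-height}.

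\textbf{Step 3: the sign.} Set $a_k:=b_k-\bar b_s(\lambda)$. This sequence is nondecreasing and satisfies $\sum_k a_kw_k=0$ by the definition of $\bar b_s$. Lemma~\ref{lem:weighted-chebyshev-eventual}, applied with $c_k:=\widetilde M_k$ nondecreasing, gives $\sum_k a_kw_k\widetilde M_k\ge0$, hence $\partial_\lambda\bar b_s\le0$.

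\textbf{Main obstacle: analytic justification.} The hard part is justifying continuity on $[\lambda_s,\infty)$ and the termwise differentiation on $(\lambda_s,\infty)$. The sum $\sum_k w_k\,\E[S\mid N>k]$ equals
\[
\frac{\lambda}{m_s}\sum_{k}c_k\sum_j c_j\,r_{\max\{k,j\}}.
\]
This is a double sum of the same shape as in \eqref{eq:dlam-ri}, so $\widetilde M_k$ may grow in $k$ and the series need not be trivially finite. I would dominate it, uniformly for $\lambda$ in compact subsets of $[\lambda_s,\infty)$, by its value at $\lambda_0<\lambda_-\le\lambda_s$. This works because every term is monotone decreasing in $\lambda$ (each of $A_n$, $c_j$, $r_k$ decreases), and the factor $1+|\log h(k)|$ in \eqref{eq:uniform-dominating} controls $|b_k|$.

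Precisely, I would use the exponential-gap estimate of Appendix~\ref{app:analytic-regularity} between $\lambda_0$ and $\lambda_s$. Its role is to absorb the extra factor $\sum_{j<n}c_j\lesssim n$ into the geometric-type decay of $A_n(\lambda)/A_n(\lambda_0)$. This yields a summable envelope for $\sum_n A_n\bigl(\sum_{j<n}c_j\bigr)\bigl(\sum_{j<n}|b_j|c_j\bigr)$. With that envelope, dominated convergence gives continuity at the endpoint $\lambda_s$, and differentiation under the sum in the interior.
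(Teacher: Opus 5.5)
Your proposal is correct and follows the paper's own proof. Both use the quotient rule for $\bar b_s=\sum_k b_kw_k^{(s)}/W_s$, the identity $\partial_\lambda w_k^{(s)}=-w_k^{(s)}\widetilde M_k^{(s)}$, and Lemma~\ref{lem:weighted-chebyshev-eventual} applied to the centred sequence $b_k-\bar b_s(\lambda)$. The differences are cosmetic: you get $\partial_\lambda T_k=-T_kM_k$ from $\partial_\lambda\log r_k-\partial_\lambda\log m_s$ rather than from the score identity $\partial_\lambda T_k=-\Cov_{s,\lambda}(X_k,S)$, and your envelope argument via the exponential-gap estimate of Appendix~\ref{app:analytic-regularity} is more explicit than the paper's one-line appeal to (II.2).
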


\begin{proof}
	Write
	\[
	B_s(\lambda):=\sum_{k\ge0} b_k^{(s)}w_k^{(s)}(\lambda),
	\qquad
	W_s(\lambda):=\sum_{k\ge0} w_k^{(s)}(\lambda),
	\]
	so that $\bar b_s(\lambda)=B_s(\lambda)/W_s(\lambda)$.
	The domination in (II.2) gives continuity of these two sums on $[\lambda_s,\infty)$.
	Differentiating the ratio yields
	\begin{equation}\label{eq:bbar-ratio-derivative}
		\partial_\lambda \bar b_s(\lambda)
		=
		\frac{1}{W_s(\lambda)}\sum_{k\ge0}\bigl(b_k^{(s)}-\bar b_s(\lambda)\bigr)\,\partial_\lambda w_k^{(s)}(\lambda).
	\end{equation}
	It remains to compute $\partial_\lambda w_k^{(s)}(\lambda)$. Since
	$w_k=a_kT_k$ with $a_k=\lambda/(f_s(k)+\lambda)$ and $T_k=\PP_{s,\lambda}(N>k)$, we have
	\[
	\partial_\lambda a_k
	=
	\frac{f_s(k)}{(f_s(k)+\lambda)^2}
	=
	a_k\,d_k,
	\qquad
	d_k=\frac{f_s(k)}{\lambda(f_s(k)+\lambda)}=\frac{1}{\lambda}-c_k.
	\]
	Moreover, differentiating $\log \PP_{s,\lambda}(N=n)=\log A_n^{(s)}(\lambda)-\log m_s(\lambda)$ gives
	$\partial_\lambda\log \PP_{s,\lambda}(N=n)=-(S-\E_{s,\lambda}[S])$; hence
	\[
	\partial_\lambda T_k
	=
	\partial_\lambda \E_{s,\lambda}[X_k]
	=
	\E_{s,\lambda}\big[X_k\,\partial_\lambda\log \PP_{s,\lambda}(N)\big]
	=
	-\Cov_{s,\lambda}(X_k,S)
	=
	-\,T_k\,M_k,
	\]
	where $M_k=\E_{s,\lambda}[S\mid N>k]-\E_{s,\lambda}[S]$ as in \eqref{eq:Mk-def-height}.
	Therefore
	\[
	\partial_\lambda w_k
	=
	(\partial_\lambda a_k)\,T_k+a_k\,\partial_\lambda T_k
	=
	a_kT_k(d_k-M_k)
	=
	-\,w_k\,\widetilde M_k,
	\qquad \widetilde M_k=M_k-d_k,
	\]
	and inserting this into \eqref{eq:bbar-ratio-derivative} yields \eqref{eq:bbar-derivative-height}.
	Finally, if $k\mapsto b_k^{(s)}$ and $k\mapsto \widetilde M_k^{(s)}(\lambda)$ are nondecreasing, then applying
	Lemma~\ref{lem:weighted-chebyshev-eventual} to the weights $w_k^{(s)}(\lambda)$ and the centered sequence
	$k\mapsto b_k^{(s)}-\bar b_s(\lambda)$ gives $\partial_\lambda \bar b_s(\lambda)\le 0$.

\end{proof}

\begin{lemma}\label{lem:cumulative-chebyshev-height}
Let $(w_k)_{k\ge0}$ be nonnegative weights with $0<W:=\sum_{k\ge0}w_k<\infty$, and let $(m_k)_{k\ge0}$ be a real sequence such that the sums below are finite. Write
\[
	\bar m:=\frac{1}{W}\sum_{k\ge0}w_km_k.
\]
Then
\[
	\sum_{k\ge0}(b_k-\bar b)w_km_k\ge0,
	\qquad
	\bar b:=\frac{1}{W}\sum_{k\ge0}w_kb_k,
\]
for every nondecreasing sequence $(b_k)_{k\ge0}$ with finite weighted first moment if and only if
\begin{equation}\label{eq:cumulative-chebyshev-tail}
	\sum_{k\ge\ell}w_k(m_k-\bar m)\ge0
	\qquad\text{for every }\ell\ge1.
\end{equation}
\end{lemma}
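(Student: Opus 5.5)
The plan is to reduce the statement to Lemma~\ref{lem:increasing-dual-cone}. Put $\mu_k:=w_k(m_k-\bar m)$, a finite signed measure on $\N_0$ with total mass $\sum_k w_km_k-\bar m W=0$. The first step is the centering identity
\[
\sum_{k\ge0}(b_k-\bar b)w_km_k=\sum_{k\ge0}b_kw_k(m_k-\bar m)-\bar b\sum_{k\ge0}w_k(m_k-\bar m)=\sum_{k\ge0}b_k\mu_k .
\]
Here the term $\bar b\,\bar m W$ cancels against $\bar b\sum_k w_k m_k$. So the left-hand side is exactly the pairing $\langle b,\mu\rangle$ from \eqref{eq:dual-cone-pairing}. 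The tail condition \eqref{eq:cumulative-chebyshev-tail} is exactly $\mu([\ell,\infty))\ge0$ for $\ell\ge1$.

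With this identification the equivalence is the equivalence (i)$\Leftrightarrow$(ii) of Lemma~\ref{lem:increasing-dual-cone}.
\begin{itemize}
\item For sufficiency, given a nondecreasing $b$ with finite weighted first moment, write $b_k=b_0+\sum_{\ell=1}^k\Delta b_\ell$. Abel summation then gives $\langle b,\mu\rangle=\sum_{\ell\ge1}\Delta b_\ell\,\mu([\ell,\infty))\ge0$. This is first done for the truncations $b^{(N)}_k=b_{\min\{k,N\}}$ and then passed to the limit, exactly as in that lemma.
\item For necessity, test with $b_k=\mathbf 1_{\{k\ge\ell\}}$. This sequence is bounded and nondecreasing, so it trivially has finite weighted first moment. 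It gives $\sum_{k\ge\ell}w_k(m_k-\bar m)\ge0$.
\end{itemize}

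The only delicate point is integrability, which I expect to be the main obstacle. Lemma~\ref{lem:increasing-dual-cone} requires $\mu$ to be a finite measure, i.e.\ $\sum_k w_k|m_k-\bar m|<\infty$. It also requires absolute convergence of $\sum_k b_k\mu_k$. I will read ``the sums below are finite'' as including $\sum_k w_k|m_k|<\infty$ and $\sum_k w_k|b_km_k|<\infty$. Together with $\sum_k w_k|b_k|<\infty$, these make $\mu$ finite and the pairing absolutely convergent.

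Under these assumptions the truncation argument works. The tails of $\sum_k|b_k||\mu_k|$ vanish, and the boundary term $(b_N-b_0)\mu([N+1,\infty))$ is bounded by $\sum_{k>N}(|b_k|+|b_0|)|\mu_k|\to0$, using $|b_N|\le|b_k|+|b_0|$ for $k>N$. This follows from monotonicity: if $b_N\ge0$ then $|b_N|\le|b_k|$ for $k>N$, and if $b_N<0$ then $|b_N|\le|b_0|$. So the limit passage holds and the proof closes.
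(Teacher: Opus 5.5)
Your proposal is correct and matches the paper's proof, which likewise applies Lemma~\ref{lem:increasing-dual-cone} to the mass-zero measure $\mu_k=w_k(m_k-\bar m)$ and uses that constants pair to zero with it. Your extra integrability discussion, including the bound on the boundary term $(b_N-b_0)\mu([N+1,\infty))$, only makes explicit the truncation limit that the paper asserts without detail in the proof of that lemma.
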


\begin{proof}
Apply Lemma~\ref{lem:increasing-dual-cone} to the signed measure
\[
\mu_k:=w_k(m_k-\bar m).
\]
It has total mass zero, and its upper-tail inequalities are exactly \eqref{eq:cumulative-chebyshev-tail}.  Pairing it with a nondecreasing $b$ gives
\[
\sum_{k\ge0}b_k\mu_k
=
\sum_{k\ge0}(b_k-\bar b)w_km_k,
\]
because constants pair to zero with $\mu$.
\end{proof}

The derivative formula gives useful sufficient criteria for Assumption~H. First, if for every
$s\in[0,1]$ and every $\lambda\in[\lambda_s,\lambda_s^\ast]$,
\begin{equation}\label{eq:height-cov-sufficient}
  \sum_{k\ge0}
  \big(\log h(k)-\bar L_s(\lambda)\big)
  w_k^{(s)}(\lambda)\widetilde M_k^{(s)}(\lambda)
  \ge0,
\end{equation}
then Lemma~\ref{lem:bbar-derivative-height} gives
$\partial_\lambda\bar L_s(\lambda)\le0$ on $[\lambda_s,\lambda_s^\ast]$, and hence the endpoint order in Assumption~H follows.

A stronger tail-form sufficient condition is the following. Define
\[
\langle \widetilde M\rangle_{s,\lambda}
:=
\frac{1}{W_s(\lambda)}
\sum_{j\ge0}w_j^{(s)}(\lambda)\widetilde M_j^{(s)}(\lambda).
\]
If, for every $s\in[0,1]$, every $\lambda\in[\lambda_s,\lambda_s^\ast]$, and every $\ell\ge1$,
\begin{equation}\label{eq:height-tail-sufficient}
\sum_{k\ge\ell}w_k^{(s)}(\lambda)
\Big(\widetilde M_k^{(s)}(\lambda)-\langle \widetilde M\rangle_{s,\lambda}\Big)
\ge 0,
\end{equation}
then Lemma~\ref{lem:cumulative-chebyshev-height}, applied with
$m_k=\widetilde M_k^{(s)}(\lambda)$ and $b_k=\log h(k)$, implies
\eqref{eq:height-cov-sufficient}.
The identity
$\partial_\lambda w_k^{(s)}(\lambda)=-w_k^{(s)}(\lambda)\widetilde M_k^{(s)}(\lambda)$ also gives
\[
	\partial_\lambda
	\frac{\sum_{k\ge\ell}w_k^{(s)}(\lambda)}{W_s(\lambda)}
	=
	-\frac{1}{W_s(\lambda)}
	\sum_{k\ge\ell}w_k^{(s)}(\lambda)
		\Big(\widetilde M_k^{(s)}(\lambda)-\langle\widetilde M\rangle_{s,\lambda}\Big).
\]
Thus these tail inequalities are precisely stochastic monotonicity of the height weights on the whole interval, and they are a useful sufficient condition for Assumption~H.

\subsection{A one-step criterion for monotonicity of \texorpdfstring{$\widetilde M_k$}{M-tilde k}}

Fix $s\in[0,1]$ and $\lambda>\lambda_s$. In this subsection we work under the law $\PP_{s,\lambda}$ from \eqref{eq:rootlaw-height} and suppress the
dependence on $(s,\lambda)$ in the notation. Recall
\[
c_k=\frac{1}{f_s(k)+\lambda},\qquad
S=\sum_{j=0}^{N-1}c_j,\qquad
T_k=\PP(N>k),\qquad
w_k=\lambda\,c_k\,T_k.
\]
Set
\begin{equation*}
\mu_k \;:=\; \E\big[S \,\big|\, N>k\big],
\qquad\text{so that}\qquad
M_k=\mu_k-\E[S].
\end{equation*}
Moreover, recall that the sign of $R_s'$ is governed by the monotonicity of
\[
\bar M_k:=M_k+c_k,
\]
equivalently of $\widetilde M_k$.
\begin{lemma}\label{lem:onestep-criterion}
Define the exit hazard
\begin{equation*}
q_k \;:=\; \PP(N=k+1 \mid N>k)\;=\;1-\frac{T_{k+1}}{T_k},
\end{equation*}
and the conditional residual tail sum
\begin{equation*}
\mathcal R_{k+1}\;:=\;\E\left[\sum_{j=k+1}^{N-1}c_j \ \bigg|\ N>k+1\right].
\end{equation*}
Then
\begin{equation}\label{eq:mu-increment}
\mu_{k+1}-\mu_k \;=\; q_k\,\mathcal R_{k+1}.
\end{equation}
Consequently,
\begin{equation}\label{eq:barM-increment}
\bar M_{k+1}-\bar M_k
\;=\;
q_k\,\mathcal R_{k+1}\;-\;(c_k-c_{k+1}),
\end{equation}
and in particular $\big(\bar M_k\big)_{k\ge0}$ is nondecreasing if and only if
\begin{equation}\label{eq:onestep-ineq}
q_k\,\mathcal R_{k+1}\ \ge\ c_k-c_{k+1}\qquad\text{for all }k\ge0.
\end{equation}
\end{lemma}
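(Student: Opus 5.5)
The plan is to compute $\mu_k$ through the tail representation of $S$ and then take differences. First, write $S=\sum_{j\ge0}c_j\1_{\{N>j\}}$. On the event $\{N>k\}$ we have $\1_{\{N>j\}}=1$ for $j\le k$, so
\[
\mu_k=\E[S\mid N>k]=\sum_{j=0}^{k}c_j+\E\Big[\sum_{j=k+1}^{N-1}c_j\,\Big|\,N>k\Big].
\]
To expand the residual term, condition on whether $N=k+1$ or $N>k+1$. On $\{N=k+1\}$ the residual sum is empty, and $\{N>k+1\}\subset\{N>k\}$ has conditional probability $1-q_k$. This gives
\[
\E\Big[\sum_{j=k+1}^{N-1}c_j\,\Big|\,N>k\Big]=(1-q_k)\,\mathcal R_{k+1}.
\]
Similarly, splitting off the $j=k+1$ term,
\[
\mu_{k+1}=\sum_{j=0}^{k+1}c_j+\E\Big[\sum_{j=k+2}^{N-1}c_j\,\Big|\,N>k+1\Big]=\sum_{j=0}^{k}c_j+\mathcal R_{k+1}.
\]
Subtracting the two expressions yields $\mu_{k+1}-\mu_k=\mathcal R_{k+1}-(1-q_k)\mathcal R_{k+1}=q_k\mathcal R_{k+1}$, which is \eqref{eq:mu-increment}.

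Next, since $M_k=\mu_k-\E[S]$ and $\E[S]$ does not depend on $k$, we get $\bar M_{k+1}-\bar M_k=\mu_{k+1}-\mu_k+c_{k+1}-c_k$. This is \eqref{eq:barM-increment}. The equivalence with \eqref{eq:onestep-ineq} is then immediate, because monotonicity of a sequence is equivalent to nonnegativity of all its increments. Since $\widetilde M_k=M_k-1/\lambda+c_k=\bar M_k-1/\lambda$, the same criterion governs $\widetilde M_k$.

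The only point needing care is integrability and well-definedness. We need $T_k>0$ for all $k$, which holds because every $A_n>0$. We also need the conditional expectations to be finite, which follows from $\E[S]<\infty$: by \eqref{eq:dlambda-logm-height}, $\E[S]=-\partial_\lambda\log m_s(\lambda)$, finite under the standing domination (II.2) for $\lambda>\lambda_s$. With these in place, the decomposition of the conditional expectations involves only nonnegative terms, so no interchange issue arises.
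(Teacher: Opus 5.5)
Your proof is correct and follows essentially the same route as the paper. Like the paper, you condition on $\{N>k\}$ and split into $\{N=k+1\}$ and $\{N>k+1\}$, which gives $\mu_k=\sum_{j\le k}c_j+(1-q_k)\mathcal R_{k+1}$ and $\mu_{k+1}=\sum_{j\le k}c_j+\mathcal R_{k+1}$; you then subtract and add $c_{k+1}-c_k$. Your extra remarks are also correct and go slightly beyond the paper: that $T_k>0$, that $\E[S]<\infty$ (which the paper leaves implicit), and that $\widetilde M_k=\bar M_k-1/\lambda$.
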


\begin{proof}
Condition on $N>k$. On the event $\{N=k+1\}$ we have $S=\sum_{j=0}^k c_j$; on the event $\{N>k+1\}$ we can
decompose
\[
S=\sum_{j=0}^k c_j+\sum_{j=k+1}^{N-1}c_j,
\qquad\text{and hence}\qquad
\E[S\mid N>k+1]=\sum_{j=0}^k c_j+\mathcal R_{k+1}.
\]
Therefore,
\[
\mu_k
=\E[S\mid N>k]
=q_k\sum_{j=0}^k c_j+(1-q_k)\Big(\sum_{j=0}^k c_j+\mathcal R_{k+1}\Big)
=\sum_{j=0}^k c_j+(1-q_k)\mathcal R_{k+1}.
\]
On the other hand, conditioning on $N>k+1$ gives
\[
\mu_{k+1}=\E[S\mid N>k+1]=\sum_{j=0}^k c_j+\mathcal R_{k+1}.
\]
Subtracting yields \eqref{eq:mu-increment}. Since $M_{k+1}-M_k=\mu_{k+1}-\mu_k$, we obtain
\eqref{eq:barM-increment} by adding $c_{k+1}-c_k$, and \eqref{eq:onestep-ineq} follows.
\end{proof}

\begin{remark}Lemma~\ref{lem:onestep-criterion} gives a convenient sufficient route to Assumption~H. If
\eqref{eq:onestep-ineq} holds for all $k$, then $k\mapsto\widetilde M_k^{(s)}(\lambda)$ is nondecreasing, and the ordinary weighted Chebyshev inequality implies the sufficient tail inequalities \eqref{eq:height-tail-sufficient}. In the sublinear regularly varying regime, Lemma~\ref{lem:rv-height} verifies
\eqref{eq:onestep-ineq} for all sufficiently large $k$ (uniformly over compact $\lambda$--windows). This is the role of regular variation in the optional verification criterion; it is not a standing assumption in Theorem~\ref{thm:height-monotonicity}.
The finitely many remaining one-step inequalities are therefore a checkable sufficient condition for Assumption~H, but the theorem itself uses only the weaker height-score endpoint order.
\end{remark}

\begin{lemma}\label{lem:rv-height}
	Assume $f_s$ is eventually nondecreasing and regularly varying with index $\rho\in[0,1)$.
	Fix $s$ and set the compact interval $I_s:=[\lambda_s,\lambda^{\ast}_s]$.
	Then there exists $k_0=k_0(s)$ such that for every $\lambda\in I_s$ the one-step inequality
	\eqref{eq:onestep-ineq} holds for all $k\ge k_0$.
	Consequently, for each $\lambda\in I_s$ the corrected profile $k\mapsto \widetilde M_k^{(s)}(\lambda)$
	is eventually nondecreasing.
\end{lemma}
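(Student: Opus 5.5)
The plan is to show that both factors on the left of \eqref{eq:onestep-ineq} are of exact order given by $f_s(k)$: namely $q_k\asymp \lambda/f_s(k)$ and $\mathcal R_{k+1}\asymp 1/\lambda$, so that the left-hand side is of order $1/f_s(k)$. The right-hand side $c_k-c_{k+1}$ should instead be $o(1/f_s(k))$. Only \emph{lower} bounds on the left and an upper bound on the right are needed, with constants uniform over $\lambda\in I_s=[\lambda_s,\lambda^\ast_s]$. Throughout, fix $s$, write $f=f_s$, and take $k$ beyond the point where $f$ is nondecreasing.

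\textbf{Step 1: the right-hand side.} Write
\[
c_k-c_{k+1}=\frac{f(k+1)-f(k)}{(f(k)+\lambda)(f(k+1)+\lambda)}\le \frac{f(k+1)-f(k)}{f(k)^2}.
\]
For an eventually monotone regularly varying sequence, the uniform convergence theorem gives $f(k+1)/f(k)\to1$. Hence $c_k-c_{k+1}=o(1/f(k))$, uniformly in $\lambda>0$. If $f$ is bounded (the case $\rho=0$ with a finite limit), the same bound gives $o(1)$, and this suffices against a left-hand side of order one.

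\textbf{Step 2: the hazard $q_k$.} We have $q_k=A_{k+1}/\sum_{n\ge k+1}A_n$, where $A_{n+1}/A_n=f(n)/(f(n)+\lambda)$ (the normalisation by $m_s(\lambda)$ cancels). Therefore
\[
\sum_{n\ge k+1}A_n=A_{k+1}\sum_{j\ge0}\prod_{i=k+1}^{k+j}\frac{f(i)}{f(i)+\lambda}.
\]
I would bound this from above by comparison with a geometric series. Fix $\varepsilon>0$. Since $f(k)=o(k)$ when $\rho<1$, and $f(k+j)/f(k)\to1$ uniformly for $j\le Cf(k)$ (Potter-type bounds on a window of length $O(f(k))=o(k)$), the products decay at least like $(1-\lambda/((1+\varepsilon)f(k)+\lambda))^{j}$ on that window. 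Beyond the window they are exponentially small relative to the first part. This gives
\[
\sum_{n\ge k+1}A_n\le (1+2\varepsilon)A_{k+1}\,\frac{f(k)+\lambda}{\lambda},
\qquad\text{hence}\qquad q_k\ge \frac{\lambda}{(1+2\varepsilon)(f(k)+\lambda)}.
\]
All constants are uniform over the compact interval $I_s$, which is bounded away from $0$.

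\textbf{Step 3: the residual $\mathcal R_{k+1}$.} Given $N>k+1$, the sum $\sum_{j=k+1}^{N-1}c_j$ is at least $c_{k+1}$ times the number of terms up to the end of the window, because $c_j\ge c_{k+1}(1-\varepsilon)$ there. The conditional law of $N-k-1$ dominates a geometric variable with success probability $\approx\lambda/f(k)$, by the reverse of the product comparison in Step 2. Its conditional mean restricted to the window is therefore $\ge(1-\varepsilon)f(k)/\lambda$, so that
\[
\mathcal R_{k+1}\ge (1-\varepsilon)^2\,\frac{f(k)}{\lambda(f(k+1)+\lambda)}\to\frac1\lambda.
\]
Combining Steps 2 and 3 gives $q_k\mathcal R_{k+1}\ge (1-5\varepsilon)/(f(k)+\lambda)$ for $k\ge k_0$. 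This beats the $o(1/f(k))$ bound from Step 1, which proves \eqref{eq:onestep-ineq} for $k\ge k_0$. Lemma~\ref{lem:onestep-criterion} then converts this into eventual monotonicity of $\bar M_k$, equivalently of $\widetilde M_k=\bar M_k+\mathrm{const}$, since $d_k=1/\lambda-c_k$.

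The main obstacle is the window estimate in Steps 2--3, which is a Laplace-type comparison of $\prod(1-\lambda/(f(i)+\lambda))$ with a geometric sequence. Two features make it delicate. First, the estimate must be uniform in $\lambda\in I_s$. Second, the bounded (slowly varying) case needs care: there $f(k)$ does not tend to infinity and the window has bounded length, so I would treat it directly via $f(k)\to L$, which gives $q_k\to\lambda/(L+\lambda)$ and $\mathcal R_{k+1}\to L/(\lambda(L+\lambda))\cdot\frac{L+\lambda}{L}$, a positive limit.
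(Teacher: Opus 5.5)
Your proposal is correct and follows essentially the paper's route. It uses the same three estimates: $c_k-c_{k+1}$ negligible against $1/f(k)$; $q_k\gtrsim\lambda/f(k)$ from an upper bound on the tail sum $1/q_k-1$; and $\mathcal R_{k+1}\gtrsim 1/\lambda$ from a window of length $\asymp f(k)/\lambda$ on which $c_j\approx c_{k+1}$. It also treats bounded $f$ separately, as the paper does.

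The one step you only sketch is the beyond-window part of Step~2. It is not exponentially small: after $Cf(k)$ steps the products have only decayed to $e^{-\Theta(C\lambda)}$, so a global growth bound on $f$ is needed. The paper supplies this with the Potter bound, exponent $\beta=\rho+\eta<1$, splitting at $\ell=k$ into $\exp(-c\ell/f(k))$ and $\exp(-c'\ell^{1-\beta})$ pieces. Order-of-magnitude constants suffice there, so your $1\pm\varepsilon$ sharpening is unnecessary. Your hazard/geometric-domination argument in Step~3 is a slightly cleaner variant of the paper's pointwise bound on $\PP(L=\ell\mid N>k+1)$, since it uses only monotonicity of $f$.
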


\begin{proof}
	Fix $s$ and write $f=f_s$. Set $\lambda_{\mathrm{lo}}:=\lambda_s$ and $\lambda_{\mathrm{hi}}:=\lambda^{\ast}_s$.
	Throughout we consider $\lambda\in[\lambda_{\mathrm{lo}},\lambda_{\mathrm{hi}}]$ and constants may depend on $(s,\lambda_{\mathrm{lo}},\lambda_{\mathrm{hi}})$ but not on $k$.

	Since $f$ is (eventually) nondecreasing and regularly varying with index $\rho\in[0,1)$, we may write
	$f(x)=x^\rho L(x)$ with $L$ slowly varying. We use the following standard facts\cite{Bingham1987}:
	\begin{enumerate}[label=(RV\arabic*)]
		\item \label{rv:ratio1} $f(k+1)/f(k)\to 1$ and hence $f(k+1)-f(k)=o(f(k))$.
		\item \label{rv:uct} (Uniform convergence on $o(k)$-windows) If $m_k=o(k)$, then
		$\sup_{0\le j\le m_k} f(k+j)/f(k)\to 1$.
		\item \label{rv:potter} (Potter bound) For each $\eta>0$ there exist $k_1$ and $C_\eta$ such that for all $k\ge k_1$
		and all $\ell\ge0$,
		\[
		\frac{f(k+\ell)}{f(k)}\le C_\eta\Big(1+\frac{\ell}{k}\Big)^{\rho+\eta}.
		\]
	\end{enumerate}

	Recall the notation from Lemma~\ref{lem:onestep-criterion} (here the dependence on $(s,\lambda)$ is suppressed):
	\[
	c_k=\frac{1}{f(k)+\lambda},\qquad
	q_k=\PP(N=k+1\mid N>k),\qquad
	\mathcal R_{k+1}=\E\left[\sum_{j=k+1}^{N-1}c_j\,\Big|\,N>k+1\right].
	\]
	By Lemma~\ref{lem:onestep-criterion}, it suffices to prove that for all large $k$ (uniformly in $\lambda\in[\lambda_{\mathrm{lo}},\lambda_{\mathrm{hi}}]$),
	\begin{equation}\label{eq:rv-target-unif}
		q_k\,\mathcal R_{k+1}\ \ge\ c_k-c_{k+1}.
	\end{equation}

	\medskip
	\noindent\textit{Step 1: $c_k-c_{k+1}$ is uniformly negligible compared to $c_{k+1}$.}
	By \ref{rv:ratio1},
	\[
	\frac{c_k-c_{k+1}}{c_{k+1}}
	=
	\frac{f(k+1)-f(k)}{f(k)+\lambda}
	\le
	\frac{f(k+1)-f(k)}{f(k)+\lambda_{\mathrm{lo}}}
	\longrightarrow 0.
	\]
	Hence for any $\eta\in(0,1)$ there exists $k_1(\eta)$ such that for all $k\ge k_1(\eta)$ and all $\lambda\in[\lambda_{\mathrm{lo}},\lambda_{\mathrm{hi}}]$,
	\begin{equation}\label{eq:ck-diff-small-unif}
		c_k-c_{k+1}\ \le\ \eta\,c_{k+1}.
	\end{equation}

	\medskip
	\noindent\textit{Step 2: uniform lower bound on $q_k$.}
	Write $A_n(\lambda)=\prod_{i=0}^{n-1}\frac{f(i)}{f(i)+\lambda}$ and $r_k(\lambda)=\sum_{n\ge k+1}A_n(\lambda)$.
	Then
	\[
	q_k=\frac{A_{k+1}}{r_k}=\frac{1}{1+H_k},\qquad
	H_k:=\sum_{\ell\ge1}\frac{A_{k+1+\ell}}{A_{k+1}}=\sum_{\ell\ge1}\prod_{j=1}^{\ell}\frac{f(k+j)}{f(k+j)+\lambda}.
	\]
	Using $\log(1-x)\le -x$ and monotonicity of $f$ gives
	\[
	\prod_{j=1}^{\ell}\frac{f(k+j)}{f(k+j)+\lambda}
	\le
	\exp\Big(-\frac{\lambda\,\ell}{f(k+\ell)+\lambda}\Big).
	\]
	Since $\lambda\mapsto \lambda/(f(k+\ell)+\lambda)$ is increasing, the right-hand side is maximised at $\lambda=\lambda_{\mathrm{lo}}$,
	hence $H_k(\lambda)\le H_k(\lambda_{\mathrm{lo}})$ and it suffices to bound $H_k(\lambda_{\mathrm{lo}})$.
	We spell out the Potter summation. Choose $\eta>0$ so that
	$\beta:=\rho+\eta<1$. For all large $k$,
	\[
	f(k+\ell)+\lambda_{\mathrm{lo}}
	\le C_0\,(f(k)+\lambda_{\mathrm{lo}})\Big(1+\frac{\ell}{k}\Big)^\beta,
	\qquad \ell\ge0.
	\]
	Splitting the sum defining $H_k$ at $\ell=k$ gives
	\[
	\sum_{\ell=1}^{k}
	\exp\Big(-\frac{\lambda_{\mathrm{lo}}\,\ell}{f(k+\ell)+\lambda_{\mathrm{lo}}}\Big)
	\le
	\sum_{\ell=1}^{\infty}
	\exp\Big(-\frac{c\,\ell}{f(k)+\lambda_{\mathrm{lo}}}\Big)
	\le C_1\Big(1+\frac{f(k)}{\lambda_{\mathrm{lo}}}\Big).
	\]
	For $\ell>k$, regular variation with index $<\beta$ gives
	$f(k)+\lambda_{\mathrm{lo}}\le C_2 k^\beta$ for all large $k$, hence
	\[
	f(k+\ell)+\lambda_{\mathrm{lo}}
	\le C_3 \ell^\beta,
	\]
	and consequently
	\[
	\sum_{\ell>k}
	\exp\Big(-\frac{\lambda_{\mathrm{lo}}\,\ell}{f(k+\ell)+\lambda_{\mathrm{lo}}}\Big)
	\le
	\sum_{\ell>k}\exp(-c'\ell^{1-\beta})
	\le C_4.
	\]
	Thus
	\[
	H_k(\lambda_{\mathrm{lo}})\le C_5\Big(1+\frac{f(k)}{\lambda_{\mathrm{lo}}}\Big)\qquad\text{for all large $k$},
	\]
	and therefore for all large $k$ and all $\lambda\in[\lambda_{\mathrm{lo}},\lambda_{\mathrm{hi}}]$,
	\begin{equation}\label{eq:qk-lower-unif}
		q_k=\frac{1}{1+H_k}\ \ge\ \frac{c_0\,\lambda_{\mathrm{lo}}}{f(k)+\lambda_{\mathrm{hi}}}.
	\end{equation}

	\medskip
	\noindent\textit{Step 3: uniform lower bound on $\mathcal R_{k+1}$.}
	Let $L:=N-(k+1)$ under the conditional law $N>k+1$, so
	\[
	\mathcal R_{k+1}
	=
	\E\Big[\sum_{j=0}^{L-1}c_{k+1+j}\ \Big|\ N>k+1\Big].
	\]
	Choose
	\[
	M_k:=\Big\lfloor \delta\,\frac{f(k)}{\lambda_{\mathrm{hi}}}\Big\rfloor
	\]
	with fixed $\delta\in(0,1)$. If $f$ is bounded, then the eventual monotonicity of $f$ gives a finite limit, hence $f(k+1)-f(k)\to0$ and $c_k-c_{k+1}\to0$ uniformly in $\lambda\in[\lambda_{\mathrm{lo}},\lambda_{\mathrm{hi}}]$.
	Moreover Step~2 gives $q_k\ge q_0>0$ and, under $N>k+1$, the residual sum always contains the term $c_{k+1}$, so $\mathcal R_{k+1}\ge c_{k+1}\ge c_0'>0$.
	Thus \eqref{eq:rv-target-unif} follows for all large $k$ in the bounded case. We may therefore assume $f(k)\to\infty$, in which case $M_k\to\infty$ and $M_k=o(k)$ by $\rho<1$.
	By \ref{rv:uct}, for all large $k$,
	\[
	\sup_{0\le j\le M_k}\frac{f(k+1+j)}{f(k+1)}\le 2.
	\]
	Hence
	\[
	c_{k+1+j}=\frac{1}{f(k+1+j)+\lambda}\ \ge\ \frac{1}{2}\,c_{k+1}
	\quad(0\le j\le M_k),
	\]
	uniformly in $\lambda\in[\lambda_{\mathrm{lo}},\lambda_{\mathrm{hi}}]$. Therefore,
	\begin{equation}\label{eq:R-lower-trunc-unif}
		\mathcal R_{k+1}\ \ge\ \frac{1}{2}\,c_{k+1}\,\E[L\wedge M_k\mid N>k+1].
	\end{equation}

	Next, for $1\le \ell\le M_k$, monotonicity of $f$ and $\log(1+u)\le u$ imply
	\[
	\frac{A_{k+1+\ell}}{A_{k+2}}
	=
	\prod_{j=1}^{\ell-1}\frac{1}{1+\lambda/f(k+1+j)}
	\ge
	\exp\Big(-\lambda\sum_{j=1}^{\ell-1}\frac{1}{f(k+1+j)}\Big)
	\ge
	\exp\Big(-\lambda_{\mathrm{hi}}\,\frac{\ell}{f(k)}\Big)
	\ge e^{-\delta},
	\]
	by the choice of $M_k$. Hence $A_{k+1+\ell}\ge e^{-\delta}A_{k+2}$ for all $1\le \ell\le M_k$, uniformly in $\lambda\in[\lambda_{\mathrm{lo}},\lambda_{\mathrm{hi}}]$.
	Using $r_{k+1}=A_{k+2}(1+H_{k+1})$ and the bound $H_{k+1}\le C_0(1+f(k)/\lambda_{\mathrm{lo}})$ from Step~2,
	we obtain for $1\le \ell\le M_k$ and all large $k$,
	\[
	\PP(L=\ell\mid N>k+1)=\frac{A_{k+1+\ell}}{r_{k+1}}
	\ge
	\frac{e^{-\delta}}{C_1(1+f(k)/\lambda_{\mathrm{lo}})}
	\ge
	c_1\,\frac{\lambda_{\mathrm{lo}}}{f(k)+\lambda_{\mathrm{hi}}}.
	\]
	Therefore,
	\[
	\E[L\wedge M_k\mid N>k+1]
	\ge
	\sum_{\ell=1}^{M_k}\ell\,\PP(L=\ell\mid N>k+1)
	\ge
	c_2\,\frac{\lambda_{\mathrm{lo}}}{f(k)+\lambda_{\mathrm{hi}}}\,M_k^2
	\asymp
	c_3\,\frac{f(k)}{\lambda_{\mathrm{hi}}},
	\]
	for all large $k$, uniformly in $\lambda\in[\lambda_{\mathrm{lo}},\lambda_{\mathrm{hi}}]$.
	Combining with \eqref{eq:R-lower-trunc-unif} yields
	\begin{equation}\label{eq:R-lower-final-unif}
		\mathcal R_{k+1}\ \ge\ c_4\,c_{k+1}\,\frac{f(k)}{\lambda_{\mathrm{hi}}}
		\qquad\text{for all large $k$, uniformly in }\lambda\in[\lambda_{\mathrm{lo}},\lambda_{\mathrm{hi}}].
	\end{equation}

	\medskip
	\noindent\textit{Step 4: conclude the one-step inequality.}
	Multiplying \eqref{eq:qk-lower-unif} and \eqref{eq:R-lower-final-unif} gives, for all large $k$ and uniformly in $\lambda\in[\lambda_{\mathrm{lo}},\lambda_{\mathrm{hi}}]$,
	\[
	q_k\,\mathcal R_{k+1}\ \ge\ c_5\,c_{k+1}.
	\]
	Choose $\eta:=c_5/2$ in \eqref{eq:ck-diff-small-unif}; then for all sufficiently large $k$ (uniformly in $\lambda\in[\lambda_{\mathrm{lo}},\lambda_{\mathrm{hi}}]$),
	\[
	q_k\,\mathcal R_{k+1}\ \ge\ c_5\,c_{k+1}\ \ge\ c_k-c_{k+1},
	\]
	which is \eqref{eq:rv-target-unif}. The final claim follows from Lemma~\ref{lem:onestep-criterion}.
\end{proof}

\subsection[Conclusion of height monotonicity]{Conclusion of Theorem~\ref{thm:height-monotonicity}}

Fix a point $s\in(0,1)$ at which the derivatives in Lemma~\ref{lem:gamma-derivative-height} exist and the envelope identity in (II.3) holds. By Lemma~\ref{lem:gamma-derivative-height},
\begin{equation}\label{eq:gamma-derivative-conclusion}
	R_s'
	=
	R_s\big(\bar b_s(\lambda_s)-\bar b_s(\lambda^{\ast}_s)\big),
\end{equation}
where $\bar b_s(\lambda)$ is the weighted mean \eqref{eq:bbar-def-height}.
Since $b_k^{(s)}=\log h(k)$, this weighted mean is exactly $\bar L_s(\lambda)$ from Assumption~H.
Therefore Assumption~H gives
$\bar b_s(\lambda_s)\ge\bar b_s(\lambda^{\ast}_s)$.
In the stronger stochastic-order formulation, this is the same dual-cone pairing from Lemma~\ref{lem:increasing-dual-cone} applied to
$\nu_{s,\lambda_s}-\nu_{s,\lambda_s^\ast}$ and the increasing tangent $\log h$.
Returning to \eqref{eq:gamma-derivative-conclusion} gives $R_s'\ge0$ at every such $s$.

The Euler--Lotka equation, (I.4), and the domination in (II.2) imply by the implicit function theorem that $s\mapsto\lambda_s$ is continuously differentiable, while (II.3) assumes that $s\mapsto\kappa_s$ is absolutely continuous. Hence $R_s=\lambda_s\kappa_s$ is absolutely continuous, and the preceding inequality holds for almost every $s$. Integrating over $s$ yields $R_1\ge R_0$, i.e.
\[
\lambda_f\kappa_f=R_f\ \ge\ R_g=\lambda_g\kappa_g,
\]
and hence
\[
\mathsf{c}^{\ast}_f=\frac{1}{R_f}\ \le\ \frac{1}{R_g}=\mathsf{c}^{\ast}_g.
\]
This completes the proof of Theorem~\ref{thm:height-monotonicity}.

\appendix

\section{Proof of the continuous-time embedding}
\label{app:continuous-time-embedding}

\begin{proof}[Proof of Lemma~\ref{lem:ct-embedding}]
Conditionally on the current finite tree $T$, the next birth time is the minimum of independent exponential clocks
with rates $f(\deg_T(x))$, one clock at each vertex $x\in T$. Hence the minimum clock is attached to $x$ with
probability
\[
\frac{f(\deg_T(x))}{\sum_{y\in T}f(\deg_T(y))},
\]
and after that birth the new child is added at $x$. This is exactly the transition rule of the discrete PA tree, and induction over the jump chain gives the claim.
\end{proof}

\section{On the analytical regularity conditions}
\label{app:analytic-regularity}

\begin{proof}[Proof of \eqref{eq:mfprime-tail}]
Fix $\varphi$ and write
\(
A_n(\lambda)=\prod_{i=0}^{n-1}\frac{\varphi(i)}{\varphi(i)+\lambda}.
\)
For each fixed $n\ge1$, differentiation of the finite product gives
\[
\partial_\lambda A_n(\lambda)
=
-A_n(\lambda)\sum_{i=0}^{n-1}\frac{1}{\varphi(i)+\lambda}.
\]
Consequently, for any $\lambda>0$,
\[
-m_\varphi'(\lambda)=\sum_{n\ge1}A_n(\lambda)\sum_{i=0}^{n-1}\frac{1}{\varphi(i)+\lambda},
\]
with the understanding that the series may take the value $+\infty$. Since the summand is nonnegative, Tonelli's theorem yields
\[
-m_\varphi'(\lambda)=\sum_{i\ge0}\frac{1}{\varphi(i)+\lambda}\sum_{n\ge i+1}A_n(\lambda)
=\sum_{i\ge0}\frac{r_i(\lambda)}{\varphi(i)+\lambda},
\]
again as an identity in $[0,\infty]$, where
\(
r_i(\lambda):=\sum_{n\ge i+1}A_n(\lambda).
\)
In particular, at $\lambda=\lambda_\varphi$ the finiteness assumption
$-m_\varphi'(\lambda_\varphi)\in(0,\infty)$ implies that the right-hand side is finite.
\end{proof}

\begin{proof}[Dominated convergence under \eqref{eq:uniform-dominating}]
Let $\lambda_0<\lambda_-$ be as in (II.2), put $\bar\lambda=(\lambda_0+\lambda_-)/2$, and choose $\varepsilon>0$ so that $\delta:=\lambda_-(1-\varepsilon)-\bar\lambda>0$. For $\mu\ge\lambda_-(1-\varepsilon)$ abbreviate
$S_n(\mu):=\sum_{i<n}(f_\theta(i)+\mu)^{-1}$ and
$L_n(\mu):=\sum_{i<n}(1+|\log h(i)|)(f_\theta(i)+\mu)^{-1}$.
Then $1-x\le e^{-x}$ gives
\[
\frac{A_n^{(\theta)}(\mu)}{A_n^{(\theta)}(\bar\lambda)}
=\prod_{i<n}\left(1-\frac{\mu-\bar\lambda}{f_\theta(i)+\mu}\right)
\le e^{-\delta S_n(\mu)}.
\]
Since $\sup_{x\ge0}xe^{-\delta x}<\infty$, monotonicity in the Laplace parameter yields
\begin{align*}
A_n^{(\theta)}(\mu)L_n(\mu)(1+S_n(\mu))
&\le C_\delta A_n^{(\theta)}(\lambda_0)L_n(\lambda_0),\\
\sum_{n\ge1}A_n^{(\theta)}(\lambda_0)L_n(\lambda_0)
&=\sum_{i\ge0}\frac{(1+|\log h(i)|)r_i^{(\theta)}(\lambda_0)}{f_\theta(i)+\lambda_0}<\infty,
\end{align*}
uniformly in $\theta$, by \eqref{eq:uniform-dominating}.

Omitting $1+S_n$ gives the first-derivative bounds needed for the implicit function theorem. After gauging, $A_n^{(\theta,*)}(\lambda)=A_n^{(\theta)}(\lambda_\theta\lambda)$, while $\lambda_\theta$ and $a'(\theta)=-\lambda_\theta'/\lambda_\theta$ are locally bounded by Lemma~\ref{lem:lambda-regularity}; the displayed bound therefore also controls the mixed derivative. Dominated convergence now justifies the continuity, mixed differentiation, and absolutely convergent rearrangements in Lemma~\ref{lem:Qprime}.
\end{proof}

\begin{lemma}
Let $\varphi:\N_0\to(0,\infty)$ and $\lambda>0$. Define
\[
  A_n(\lambda):=\prod_{i=0}^{n-1}\frac{\varphi(i)}{\varphi(i)+\lambda},\qquad A_0(\lambda):=1.
\]
The choice $A_0(\lambda)=1$ corresponds to the empty product, so the sum in \eqref{eq:telescoping} naturally starts at $n=0$. Then for every $m\ge0$,
\begin{equation}\label{eq:telescoping}
\sum_{n=0}^m \frac{\lambda}{\varphi(n)+\lambda}\,A_n(\lambda)=1-A_{m+1}(\lambda).
\end{equation}
If moreover $A_{m}(\lambda)\to 0$ as $m\to\infty$, then
\[
\sum_{n=0}^\infty \frac{\lambda}{\varphi(n)+\lambda}\,A_n(\lambda)=1.
\]
\end{lemma}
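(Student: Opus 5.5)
The plan is a one-step telescoping identity followed by induction on $m$. The key observation is that each summand is exactly the drop between consecutive products:
\[
A_n(\lambda)-A_{n+1}(\lambda)
=A_n(\lambda)\Bigl(1-\frac{\varphi(n)}{\varphi(n)+\lambda}\Bigr)
=\frac{\lambda}{\varphi(n)+\lambda}\,A_n(\lambda),
\qquad n\ge0.
\]
This follows from the recursion $A_{n+1}(\lambda)=A_n(\lambda)\,\varphi(n)/(\varphi(n)+\lambda)$. The recursion holds for every $n\ge0$, including $n=0$, precisely because of the empty-product convention $A_0(\lambda)=1$.

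Summing the identity over $n=0,\dots,m$ gives
\[
\sum_{n=0}^m\frac{\lambda}{\varphi(n)+\lambda}A_n(\lambda)=A_0(\lambda)-A_{m+1}(\lambda)=1-A_{m+1}(\lambda),
\]
which is \eqref{eq:telescoping}. Equivalently, one can induct on $m$: the case $m=0$ reads $\lambda/(\varphi(0)+\lambda)=1-\varphi(0)/(\varphi(0)+\lambda)$, and the inductive step adds the drop $A_{m+1}(\lambda)-A_{m+2}(\lambda)$ to both sides.

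For the infinite sum, all summands are nonnegative, since $\varphi>0$ and $\lambda>0$. The partial sums are therefore nondecreasing and equal $1-A_{m+1}(\lambda)$. Letting $m\to\infty$ and using $A_{m}(\lambda)\to0$, the series converges to $1$.

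There is no real obstacle here. The only point that needs care is the indexing at $n=0$, which the convention $A_0=1$ handles. A probabilistic reading is also available as a sanity check. Let $X_\varphi$ be the pure-birth process, with jump times $\tau_n$, killed at an independent $\mathrm{Exp}(\lambda)$ time. The summand $\frac{\lambda}{\varphi(n)+\lambda}A_n(\lambda)$ is the probability that the killing occurs during the sojourn in state $n$. The identity then says that killing happens with probability one exactly when the probability $A_{m}(\lambda)$ of surviving past $\tau_m$ tends to zero.
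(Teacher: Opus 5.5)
Your proposal is correct and follows the paper's proof: both use the recursion $A_{n+1}(\lambda)=A_n(\lambda)\varphi(n)/(\varphi(n)+\lambda)$ to write each summand as $A_n(\lambda)-A_{n+1}(\lambda)$, then telescope over $n=0,\dots,m$. You also spell out the passage to the limit $m\to\infty$, which the paper treats as immediate.
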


\begin{proof}
Since $A_{n+1}(\lambda)=A_n(\lambda)\frac{\varphi(n)}{\varphi(n)+\lambda}$,
\[
A_n(\lambda)-A_{n+1}(\lambda)=A_n(\lambda)\frac{\lambda}{\varphi(n)+\lambda}.
\]
Summing over $n=0,\dots,m$ telescopes to \eqref{eq:telescoping}.
\end{proof}


\noindent\textbf{A note on the use of AI.}

\noindent The accompanying Python file \texttt{counterexample\_certificate.py} was created with the assistance of OpenAI Codex (GPT-5.6) to implement the exact-arithmetic certificate for the counterexample.

The author reviewed and verified the code and its output and takes full responsibility for the computational claims.

\medskip
\noindent\textbf{Acknowledgement.} I would like to thank Bas Lodewijks and the organisers of the Sheffield Random Networks Workshop 2025 for raising my awareness of the depth monotonicity problem in random recursive trees.

\printbibliography

\end{document}